\documentclass[11pt,epsf]{article}
\usepackage{graphicx}
\usepackage{amsthm}
\usepackage{amsfonts}
\usepackage{amssymb}
\headsep 0cm
\headheight 0cm
\topmargin 0cm
\evensidemargin 0cm
\oddsidemargin 0cm
\textheight 23cm
\textwidth 16cm
\unitlength 1mm
\title{On higher-order discriminants}
\author{Vladimir Petrov Kostov\\ Universit\'e C\^ote d'Azur, CNRS, LJAD, France
\\  
e-mail: kostov@math.unice.fr} 
\date{}
\bibliographystyle{plain} 
\newtheorem{tm}{Theorem}
\newtheorem{defi}[tm]{Definition}
\newtheorem{rem}[tm]{Remark}
\newtheorem{rems}[tm]{Remarks}
\newtheorem{lm}[tm]{Lemma}
\newtheorem{ex}[tm]{Example}

\newtheorem{prop}[tm]{Proposition}
\newtheorem{nota}[tm]{Notation}
\newtheorem{st}[tm]{Statement}

\newtheorem{obs}[tm]{Observation}
\begin{document} 
\maketitle 

\begin{abstract}
For the family of polynomials in one variable 
$P:=x^n+a_1x^{n-1}+\cdots +a_n$, $n\geq 4$, we consider its 
higher-order discriminant sets $\{ \tilde{D}_m=0\}$, where 
$\tilde{D}_m:=$Res$(P,P^{(m)})$, 
$m=2$, $\ldots$, $n-2$, and their projections in the spaces of 
the variables $a^k:=(a_1,\ldots ,a_{k-1},a_{k+1},\ldots ,a_n)$. 
Set $P^{(m)}:=\sum _{j=0}^{n-m}c_ja_jx^{n-m-j}$, $P_{m,k}:=c_kP-x^mP^{(m)}$. 
We show that Res$(\tilde{D}_m,\partial \tilde{D}_m/\partial a_k,a_k)=
A_{m,k}B_{m,k}C_{m,k}^2$, where $A_{m,k}=a_n^{n-m-k}$, 
$B_{m,k}=$Res$(P_{m,k},P_{m,k}')$ 
if $1\leq k\leq n-m$ and $A_{m,k}=a_{n-m}^{n-k}$, 
$B_{m,k}=$Res$(P^{(m)},P^{(m+1)})$ if $n-m+1\leq k\leq n$.
The equation $C_{m,k}=0$ defines the projection in the space 
of the variables $a^k$ 
of the closure of the set of values of $(a_1,\ldots ,a_n)$ 
for which $P$ and $P^{(m)}$ have two 
distinct roots in common. The polynomials $B_{m,k},C_{m,k}\in \mathbb{C}[a^k]$ 
are irreducible. The result is generalized to the case when $P^{(m)}$ 
is replaced by a polynomial $P_*:=\sum _{j=0}^{n-m}b_ja_jx^{n-m-j}$, 
$0\neq b_i\neq b_j\neq 0$ for $i\neq j$.\\ 

{\bf AMS classification:} 12E05; 12D05\\

{\bf Key words:} polynomial in one variable; discriminant set; 
resultant; multiple root
\end{abstract}

\section{Introduction}

In this paper we consider for $n\geq 4$ the general family of monic polynomials 
in one variable 
$P(x,a):=x^n+a_1x^{n-1}+\cdots +a_n$, $x,a_j\in \mathbb{C}$. For its $m$th 
derivative w.r.t. $x$ we set 
$P^{(m)}:=c_0x^{n-m}+c_1a_1x^{n-m-1}+\cdots +c_{n-m}a_{n-m}$, 
where $c_j=(n-j)!/(n-m-j)!$. 
For $m=1$, $\ldots$, 
$n-1$ we define the {\em $m$th order discriminant} of $P$ as 
$\tilde{D}_m:=$Res$(P,P^{(m)})$ which is the determinant of the Sylvester matrix 
$S(P, P^{(m)})$. We remind that $S(P, P^{(m)})$ is 
$(2n-m)\times (2n-m)$, its first (resp. $(n-m+1)$st) row equals 

$$(1,a_1,\ldots ,a_n,0,\ldots ,0)~~\, \, \, {\rm (resp.}~~\, \, \, 
(c_0,c_1a_1,\ldots ,c_{n-m}a_{n-m},0,\ldots ,0)~~\, {\rm )~~,}$$
the second (resp. $(n-m+2)$nd) row is obtained from this one by shifting 
by one position to the right and by adding $0$ to the left etc. We say that 
the variable $a_j$ is of {\em quasi-homogeneous weight} $j$ because up to 
a sign it equals the $j$th elementary symmetric polynomial in the roots of 
the polynomial $P$; the quasi-homogeneous weight of $x$ is~$1$.  

There are at least two problems in which such discriminants are of interest. 
One of 
them is the Casas-Alvero conjecture that if a complex univariate polynomial has 
a root in common with each of its nonconstant derivatives, then it is a power 
of a linear polynomial, see \cite{CLO}, \cite{Y1} and \cite{Y2} 
and the claim in \cite{Y} that the answer to 
the conjecture is positive. 

Another one is the study of the 
possible arrangements of the roots of a hyperbolic polynomial (i.e. real and 
with all roots real) and of all its nonconstant derivatives on the real line. 
This problem can be generalized to a class of polynomial-like functions 
characterized by the property their $n$th derivative to vanish nowhere. 
It turns out that for this class Rolle's theorem gives only necessary, 
but not sufficient 
conditions for realizability of a given arrangement by the zeros of 
a polynomial-like function, see \cite{Ko4}, \cite{Ko5}, \cite{Ko6} 
and \cite{Ko7}. Pictures 
of discriminants for the cases $n=4$ and $n=5$ can be found in \cite{Ko1}. 
Properties of the discriminant set $\{ \tilde{D}_1=0\}$ for real polynomials 
are proved in \cite{Me}. 

A closely related question to the one of the 
arrangement of the roots of a hyperbolic polynomial is the one 
to study {\em overdetermined strata} in the space of the coefficients of the 
family of polynomials $P$ (the definition is given by B.~Z.~Shapiro 
in \cite{KoSh}); these are sets of values of the coefficients for which there 
are more equalities between roots of the polynomial and its derivatives than 
expected. Example: the family of polynomials $x^4+ax^3+bx^2+cx+d$ depends on 
$4$ parameters two of which can be eliminated by shifting and rescaling the 
variable $x$ which gives (up to a nonzero constant factor) the family 
$S:=x^4-x^2+cx+d$. For $c=0$, $d=1/2$ the polynomial has two double roots 
$\pm 1/\sqrt{2}$, and $0$ is a common root for $S'$ and $S'''$. This makes 
three independent equalities, i.e. more than the number of parameters. For 
polynomials of small degree, overdetermined strata have been studied in 
\cite{EHHS} and \cite{EK}. The study of overdetermined strata is interesting 
both in the case of complex and in the case of real coefficients. 

In what follows we enlarge the context by considering instead of the 
couple of polynomials $(P,P^{(m)})$ the couple $(P,P_*)$, where 
$P_*:=\sum _{j=0}^{n-m}b_ja_jx^{n-m-j}$, $b_j\neq 0$ and $b_i\neq b_j$ 
for $i\neq j$. By abuse of notation we set $\tilde{D}_m:=$Res$(P,P_*)$.

\begin{prop}\label{propmonom}
The polynomial $\tilde{D}_m$ is irreducible. It is a degree $n$ polynomial 
in each of the variables $a_j$, $j=1$, $\ldots$, $n-m$, and a degree 
$n-m$ polynomial in each of the variables $a_j$, $j=n-m+1$, $\ldots$, $n$. 
It contains monomials $M_j:=\pm b_j^na_j^n(1-b_0/b_j)^ja_n^{n-m-j}$, 
$j=1$, $\ldots$, $n-m$, and 
$N_s:=\pm b_{n-m}^{m-s}a_{n-m}^{m-s}b_0^{n-m+s}a_{n-m+s}^{n-m}$, $s=1$, $\ldots$, $m-1$. 
It is quasi-homogeneous, of 
quasi-homogeneous weight $n(n-m)$. The monomial $M_j$ (resp. $N_s$) is 
the only monomial containing $a_j^n$ (resp. $a_{n-m+s}^{n-m}$).
\end{prop}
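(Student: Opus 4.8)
The plan is to treat the five assertions separately: quasi-homogeneity, the degree bounds, the leading monomials $M_j,N_s$ (which also settle the two uniqueness statements), and finally irreducibility. Throughout I use the product formula $\tilde{D}_m=\mathrm{Res}(P,P_*)=\prod_{i=1}^{n}P_*(\xi_i)$, valid since $P$ is monic ($\xi_1,\ldots,\xi_n$ denoting the roots of $P$), together with the Sylvester matrix $S(P,P_*)$.

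\emph{Quasi-homogeneity and degrees.} Assigning weight $j$ to $a_j$ and $1$ to $x$, a substitution $a_j\mapsto\lambda^{j}a_j$ scales each root as $\xi_i\mapsto\lambda\xi_i$, hence $P_*(\xi_i)$ (quasi-homogeneous of weight $n-m$ in $(x,a)$) by $\lambda^{n-m}$ and $\tilde{D}_m$ by $\lambda^{n(n-m)}$; equivalently, the shift structure of the rows of $S(P,P_*)$ forces every term of $\det S(P,P_*)$ to have weight $n(n-m)$. For the degrees: in $S(P,P_*)$ the entries containing $a_j$ lie in the $n$ columns $j+1,\ldots,j+n$ when $j\leq n-m$ (from both the $P$- and the $P_*$-rows) and in only the $n-m$ columns $j+1,\ldots,j+n-m$ when $j\geq n-m+1$ (from $P$-rows alone); since each column is used at most once in a single term of the determinant, $\deg_{a_j}\tilde{D}_m\leq n$, resp.\ $\leq n-m$. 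Equality will come from the monomials below.

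\emph{The monomials and uniqueness.} To find the coefficient of $a_j^{\,n}$ for $1\leq j\leq n-m$, let $a_j\to\infty$ with the other coefficients fixed and generic (so $a_n\neq0$): then $P$ has $j$ ``large'' roots of size $\sim(-a_j)^{1/j}$ and $n-j$ ``small'' roots of size $\sim(-a_n/a_j)^{1/(n-j)}$, and multiplying the corresponding values of $P_*$ — on the large roots $P_*(\xi)\sim\xi^{\,n-m-j}a_j(b_j-b_0)$, which uses $b_j\neq b_0$, and on the small roots $P_*(\xi)\sim b_ja_j\xi^{\,n-m-j}$ — gives $\tilde{D}_m\sim\pm b_j^{\,n}(1-b_0/b_j)^{j}a_n^{\,n-m-j}a_j^{\,n}$. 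Since this holds for generic values of the remaining coefficients, the coefficient of $a_j^{\,n}$ is identically that single monomial $M_j$; in particular $\deg_{a_j}\tilde{D}_m=n$ and $M_j$ is the only monomial containing $a_j^{\,n}$. The same trick with $a_{n-m+s}\to\infty$ ($P$ now having $n-m+s$ large and $m-s$ small roots, with $a_{n-m+s}$ absent from $P_*$) exhibits $N_s$ as the whole coefficient of $a_{n-m+s}^{\,n-m}$ for $1\leq s\leq m-1$; and the coefficient of $a_n^{\,n-m}$ is the nonzero constant $\pm b_0^{\,n}$, obtained from $\mathrm{Res}(P,P_*)=\pm b_0^{\,n}\prod_{P_*(\eta)=0}P(\eta)$ by taking the constant term of $P$ in each factor, whence $\deg_{a_n}\tilde{D}_m=n-m$. (Specializing all other $a_i$ to $0$ and factoring the resulting resultant recovers $M_j,N_s$ faster but not the uniqueness statements, which is why the asymptotics are needed.)

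\emph{Irreducibility.} Put $W=\{(a,x)\in\mathbb{C}^{n}\times\mathbb{C}:P(x,a)=P_*(x,a)=0\}$. Solving $P(x,a)=0$ for $a_n$ and $P_*(x,a)=0$ for $a_{n-m}$ (legitimate as $b_{n-m}\neq0$ and $n-m\neq n$) presents $W$ as the graph of a polynomial map over $\mathbb{C}^{n-1}$, so $W$ is irreducible of dimension $n-1$. Because $P$ is monic, $\tilde{D}_m(a)=0$ precisely when $P(\cdot,a)$ and $P_*(\cdot,a)$ have a common root, so the hypersurface $V=\{\tilde{D}_m=0\}$ is the image of $W$ under $(a,x)\mapsto a$ and is therefore irreducible. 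It remains to prove that $\tilde{D}_m$ is squarefree. Running the same elimination on $W_2=\overline{\{(a,x_1,x_2):x_1\neq x_2,\ P(x_i,a)=P_*(x_i,a)=0\}}$ — after dividing the two ``$x_2$-equations'' by $x_2-x_1$ one solves the four equations for four distinct coefficients $a_\bullet$, which is where $n\geq4$ is used — shows $\dim W_2=n-2$; likewise the loci where the common root is a multiple root of $P$ or of $P_*$ have dimension $\leq n-2$. Hence at a generic $a^{0}\in V$ the polynomials $P(\cdot,a^{0})$ and $P_*(\cdot,a^{0})$ have exactly one common root $x^{0}$, simple for each, and $P(\cdot,a^{0})$ is separable; writing $\tilde{D}_m=\prod_iP_*(\xi_i(a))$ near $a^{0}$ and using $\partial\xi_i/\partial a_n=-1/P'(\xi_i)$ one gets $\partial\tilde{D}_m/\partial a_n|_{a^{0}}=-P_*'(x^{0})P'(x^{0})^{-1}\prod_{\xi_i(a^{0})\neq x^{0}}P_*(\xi_i(a^{0}))\neq0$. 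A square factor of $\tilde{D}_m$ would make $d\tilde{D}_m$ vanish along all of $V$, a contradiction, so $\tilde{D}_m$ is reduced; being reduced with irreducible zero locus, it is irreducible. The step I expect to be the main obstacle is exactly this squarefreeness argument — the estimate $\dim W_2=n-2$ and the genericity of a \emph{single, simple} common root — since that is what separates $\tilde{D}_m$ from a proper power of the irreducible polynomial defining $V$.
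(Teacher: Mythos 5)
Your proof is correct, but it reaches the two substantive claims by routes quite different from the paper's. For the monomials $M_j$, $N_s$ and their uniqueness, the paper works entirely inside the Sylvester matrix: it subtracts the $P_*$-rows scaled by $1/b_j$ from the $P$-rows so that $a_j$ survives only in the $P_*$-rows, and then observes that the unique determinant term collecting all $n$ (resp.\ $n-m$) occurrences of $a_j$ is forced to pick up a triangular block with diagonal entries $1-b_0/b_j$ and $a_n$ (resp.\ $b_0$ and $b_{n-m}a_{n-m}$). Your Newton-polygon asymptotics ($j$ large roots balancing $x^n$ against $a_jx^{n-j}$, $n-j$ small roots balancing $a_jx^{n-j}$ against $a_n$) compute the full leading coefficient in $a_j$ in one stroke, so presence and uniqueness come together; the price is the (standard but unstated) control of the Puiseux expansions and of the error terms in the product $\prod_iP_*(\xi_i)$, and the explicit need for $b_j\neq b_0$ appears in the same place in both arguments. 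The real divergence is irreducibility: the paper disposes of it in one sentence, asserting that the monomials $M_j$, $N_s$ cannot all be written simultaneously as products of two monomials of complementary quasi-homogeneous weights -- a support/weight argument whose details are left to the reader. You instead prove that the incidence variety $W=\{P=P_*=0\}$ is a graph, hence irreducible, so $V=\{\tilde D_m=0\}$ is an irreducible set, and then establish squarefreeness by showing $\partial\tilde D_m/\partial a_n\neq0$ at a generic point of $V$ via the dimension counts on $W_2$ and on the multiple-root loci. This is longer but self-contained and robust, and it is in fact the same incidence-variety philosophy the paper itself uses later for the irreducibility of the factors $C_{m,k}$ (Statement~13) and for Lemma~9. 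One small caveat: your choice of the four coefficients $a_n,a_{n-1},a_{n-m},a_{n-m-1}$ in the elimination for $W_2$ requires $2\le m\le n-2$ (for $m=1$ the indices collide, consistently with Remark~14 of the paper); since that is exactly the range of Theorem~5, this is not a defect, but it should be said if the proposition is meant to cover $m=1$ or $m=n-1$ as well.
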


\begin{proof}
We prove first the presence in $\tilde{D}_m$ of the monomials $M_j$ and $N_s$. 
For each $j$ fixed, $1\leq j\leq n-m$, one can subtract the $(n-m+\nu )$th row 
of $S(P,P_*)$ multiplied by $1/b_j$ from its $\nu$th one, 
$\nu =1$, $\ldots$, $n-m$. We denote by $T$ the new matrix.
One has $\det T=\det S(P,P_*)$ and  
the variable $a_j$ is not present in the first $n-m$ rows of $T$. Thus there 
remains a single term of $\det T$ containing $n$ factors $a_j$; it is 
obtained when the entries $b_ja_j$ in positions $(n-m+\mu ,j+\mu )$ of $T$, 
$\mu =1$, $\ldots$, $n$, are multiplied by the entries $a_n$ in positions 
$(\ell ,n+\ell )$, $\ell =j+1$, $\ldots$, $n-m$, 
and by the entries $1-b_0/b_j$ in 
positions $(\ell ,\ell )$, $\ell =1$, $\ldots$, $j$; this gives the monomial 
$M_j$. (If when computing $\det S(P,P_*)$ 
one chooses to multiply the $n$ entries $b_ja_j$, then they must 
be multiplied by entries of the matrix obtained from $S(P,P_*)$ by deleting 
the rows and columns of the entries $b_ja_j$. This matrix is block-diagonal, 
its upper left block is upper-triangular, with diagonal entries equal to 
$1-b_0/b_j$, its right lower block is lower-diagonal, with diagonal 
entries equal to $a_n$. Hence $M_j$ is the only monomial containing $n$ 
factors $a_j$.)

To obtain the monomial $N_s$ one chooses in the definition of $T$ 
above $j=n-m$. Hence the first $n-m$ 
rows of $T$ do not contain the variable $a_{n-m}$. The monomial $N_s$ is 
obtained by multiplying the entries $a_{n-m+s}$ in positions 
$(r,n-m+s+r)$, $r=1$, $\ldots$, $n-m$, by the entries $b_{n-m}a_{n-m}$ in 
positions $(q,q)$, $q=2n-2m+s+1$, $\ldots$, $2n-m$ and by the entries $b_0$ 
in positions $(n-m+p,p)$, $p=1$, $\ldots$, $n-m+s$. The monomial $N_s$ 
is the only one containing $n-m$ factors $a_{n-m+s}$ (proved by analogy with 
the similar claim about the monomial $M_j$).

The matrix $S(P,P_*)$ contains each of the variables $a_j$, $j=1$, $\ldots$, 
$n-m$ (resp. $a_s$, $s=n-m+1$, $\ldots$, $n$) 
in exactly $n$ (resp. $n-m$) of its columns. 
The presence of the monomials $M_j$ (resp. $N_s$) in $\tilde{D}_m$ shows that 
$\tilde{D}_m$ is a degree $n$ polynomial in the variables $a_j$ and a degree 
$n-m$ one in the variables $a_s$. 

Quasi-homogeneity of $\tilde{D}_m$ follows from the fact that its zero set 
and the zero sets of the polynomials $P$ and $P_*$ remain 
invariant under the quasi-homogeneous dilatations $x\mapsto tx$, 
$a_{\kappa}\mapsto t^{\kappa}a_{\kappa}$, $\kappa =1$, $\ldots$, $n$. 
Each of the monomials $M_j$ and $N_s$ is of quasi-homogeneous weight $n(n-m)$. 

Irreducibility of $\tilde{D}_m$ results from the impossibility to present 
simultaneously all monomials $M_j$ and $N_s$  
as products of two monomials, 
of quasi-homogeneous weights $u$ and $n(n-m)-u$, 
for any $1\leq u\leq n(n-m)-1$. 
\end{proof}

\begin{nota}
{\rm For $Q,R\in \mathbb{C}[x]$ we denote by Res$(Q,R)$ the resultant of $Q$ 
and $R$ and we write $P^{(m)}$ for $d^mP/dx^m$. 
This refers also to the case when the coefficients of $Q$ and $R$ 
depend on parameters. We set $a:=(a_1,\ldots ,a_n)$ (resp. 
$a^j=(a_1,\ldots ,a_{j-1},a_{j+1},\ldots ,a_n)$) and we denote by 
$\mathcal{A}\simeq \mathbb{C}^n$ (resp. $\mathcal{A}^j\simeq \mathbb{C}^{n-1}$) 
the space of the variables $a$ (resp. $a^j$). 
For $K,L\in \mathbb{C}[a]$ we write $S(K,L,a_k)$ and Res$(K,L,a_k)$ 
for the Sylvester matrix and the resultant of $K$ and $L$ 
when considered as polynomials in $a_k$. We set 
$\tilde{D}_{m,k}:=$Res$(\tilde{D}_m,\partial \tilde{D}_m/\partial a_k,a_k)$. 
For a matrix $A$ we denote by $A_{k,\ell}$ its entry in position $(k,\ell )$ 
and by $[A]_{k,\ell }$ the matrix obtained from $A$ 
by deleting its $k$th row and $\ell$th column. By $\Omega$ 
(indexed, with accent or not) we denote throughout the paper 
nonspecified nonzero constants. 
By $P_{m,k}$ ($1\leq k\leq n-m$) we denote the polynomial $b_kP-x^mP_*$;  
its coefficients of $x^n$ and $x^k$ equal $b_k-b_0\neq 0$ and $0$.}
\end{nota} 

\begin{defi}\label{ThetaM}
{\rm For $1\leq m\leq n-2$ we denote by $\Theta$ and $\tilde{M}$ 
the subsets of the hypersurface 
$\{ \tilde{D}_m=0\} \subset \mathcal{A}$ such that for $a\in \Theta$ (resp. 
for $a\in \tilde{M}$) the polynomial $P$ has a root which is a double 
root of $P_*$ (resp. the polynomials $P$ and $P_*$ have two 
simple roots in common). The remaining roots of $P$ and $P_*$  
are presumed 
simple and mutually distinct. We call the set $\tilde{M}$ 
{\em the Maxwell stratum} of $\{ \tilde{D}_m=0\}$.}   
\end{defi}

In the present paper we prove the following theorem;

\begin{tm}\label{maintm}
Suppose that $2\leq m\leq n-2$. Then:

(1) The polynomial $\tilde{D}_{m,k}$ can be represented in the form 

\begin{equation}\label{formula}
\tilde{D}_{m,k}=A_{m,k}B_{m,k}C_{m,k}^2~,
\end{equation} 
where $A_{m,k}=a_n^{n-m-k}$ if $k=1$, $\ldots$, $n-m$, and 
$A_{m,k}=a_{n-m}^{n-k}$ if $k=n-m+1$, $\ldots$, $n$, 
$B_{m,k}$ and $C_{m,k}$ are irreducible 
polynomials in the variables $a^k$.

(2) One has $B_{m,k}=${\rm Res}$(P_{m,k},P_{m,k}')$ if $k=1$, $\ldots$, $n-m$, and 
$B_{m,k}=${\rm Res}$(P_*,P_*')$ if $k=n-m+1$, $\ldots$, $n$.

(3) The equation $C_{m,k}=0$ defines the projection in the space $\mathcal{A}^k$ 
of the closure of the Maxwell stratum.
\end{tm}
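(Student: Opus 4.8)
\emph{Proof plan.} The plan is to reduce (\ref{formula}) to a factorisation of the ordinary discriminant $\mathrm{disc}_{a_k}(\tilde{D}_m)$ and to read that factorisation off a birational description of $\{\tilde{D}_m=0\}$ through a common root of $P$ and $P_*$. By Proposition \ref{propmonom} the leading coefficient of $\tilde{D}_m$ in $a_k$ equals $\Omega A_{m,k}$ (it is the coefficient of $a_k^n$, resp. $a_k^{n-m}$, and $M_k$, resp. the pertinent $N_s$, is the unique monomial carrying that power), so $d:=\deg_{a_k}\tilde{D}_m$ is $n$ for $k\leq n-m$ and $n-m$ for $k\geq n-m+1$, and $\partial\tilde{D}_m/\partial a_k$ has degree $d-1$ in $a_k$. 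Hence $\tilde{D}_{m,k}=\mathrm{Res}(\tilde{D}_m,\partial\tilde{D}_m/\partial a_k,a_k)=(-1)^{d(d-1)/2}(\mathrm{lead}_{a_k}\tilde{D}_m)\,\mathrm{disc}_{a_k}(\tilde{D}_m)=\Omega\,A_{m,k}\,\mathrm{disc}_{a_k}(\tilde{D}_m)$, and (\ref{formula}) reduces to $\mathrm{disc}_{a_k}(\tilde{D}_m)=\Omega\,B_{m,k}C_{m,k}^2$. Now $a_k$ enters $P$ and $P_*$ only through the coefficient of $x^{n-k}$, and $P_{m,k}$ (for $k\leq n-m$), resp. $P_*$ (for $k\geq n-m+1$), is free of $a_k$; thus for generic $a^k$ the $d$ roots $\alpha_1,\dots,\alpha_d$ of $\tilde{D}_m(a^k,\cdot)$ are in bijection with the $d$ roots $y_1,\dots,y_d$ of $P_{m,k}(\cdot,a^k)$, resp. $P_*(\cdot,a^k)$: to $y_i\neq 0$ one attaches $\alpha_i:=-P_0(y_i)/y_i^{n-k}$ with $P_0:=P-a_kx^{n-k}$, the unique value of $a_k$ making $y_i$ a common root of $P$ and $P_*$, and conversely every common root of $P,P_*$ lies among the roots of $P_{m,k}$ (resp. $P_*$) and maps back to its value of $a_k$. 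Generic bijectivity is equivalent to $\mathrm{disc}_{a_k}(\tilde{D}_m)\not\equiv 0$.

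Writing $\alpha_i-\alpha_j=(y_i-y_j)\mathcal{G}(y_i,y_j)/(y_iy_j)^{n-k}$ with $\mathcal{G}$ symmetric in its two arguments gives
\[
\prod_{i<j}(\alpha_i-\alpha_j)^2=\frac{\mathcal{C}^2\,\prod_{i<j}(y_i-y_j)^2}{\left(\prod_i y_i\right)^{2(n-1)(n-k)}},
\]
where $\mathcal{C}:=\prod_{i<j}\mathcal{G}(y_i,y_j)$ is symmetric in the $y_i$, hence lies in $\mathbb{C}[a^k]$, $\prod_{i<j}(y_i-y_j)^2$ is $\mathrm{disc}_x(P_{m,k})$, resp. $\mathrm{disc}_x(P_*)$, up to a nonzero constant, and $\prod_i y_i$ is $\Omega a_n$, resp. $\Omega a_{n-m}$. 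Substituting into $\tilde{D}_{m,k}=\pm(\mathrm{lead}_{a_k}\tilde{D}_m)^{2d-1}\prod_{i<j}(\alpha_i-\alpha_j)^2$ and collecting the monomial factors yields (\ref{formula}) with $B_{m,k}=\mathrm{Res}(P_{m,k},P_{m,k}')$, resp. $\mathrm{Res}(P_*,P_*')$, and $C_{m,k}$ equal to $\mathcal{C}$ with its largest monomial factor removed: when $k\geq n-m+1$ the powers of $a_{n-m}$ cancel outright and the residual monomial is at once $a_{n-m}^{n-k}$, while for $k\leq n-m$ one checks, using that $P_{m,k}$ and $P$ have the same $m$ lowest-degree terms, that as $a_n\to 0$ precisely one $y_i\to 0$ and that each $\mathcal{G}(y_i,y_j)$ is $O(a_n^m)$, so $\mathcal{C}$ is divisible by exactly $a_n^{m(n-1)}$ and the residual monomial is exactly $a_n^{n-m-k}=A_{m,k}$. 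The Maxwell factor is squared: a coincidence $y_i=y_j$ is a genuine ramification of $P_{m,k}$ (resp. $P_*$), so $(\alpha_i-\alpha_j)^2$ vanishes there to first order — this produces $B_{m,k}$ — whereas a coincidence $\alpha_i=\alpha_j$ with $y_i\neq y_j$ glues two distinct branches of $\alpha$ as an algebraic function over $\mathcal{A}^k$, so $(\alpha_i-\alpha_j)^2$ vanishes there to even, generically second, order.

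For $y_i\neq y_j$ the equation $\mathcal{G}(y_i,y_j)=0$ means $\alpha(y_i)=\alpha(y_j)$, i.e. the corresponding value of $a_k$ makes both $y_i$ and $y_j$ common roots of $P$ and $P_*$; hence $\{C_{m,k}=0\}$ is the projection to $\mathcal{A}^k$ of $\overline{\tilde{M}}$, which is part (3). For irreducibility of $B_{m,k}$: when $k\geq n-m+1$ one has $B_{m,k}=\mathrm{Res}(P_*,P_*')=b_0\,\mathrm{disc}_x(P_*)$, and under $a_j\mapsto(b_0/b_j)a_j$ the polynomial $P_*$ becomes the generic monic polynomial of degree $n-m$, whose discriminant is irreducible; when $k\leq n-m$, $(b_k-b_0)^{-1}P_{m,k}$ is, after rescaling the $a_j$, the generic monic degree-$n$ polynomial with the coefficient of $x^{n-k}$ set to zero, and irreducibility of its discriminant follows by an argument parallel to the one in the proof of Proposition \ref{propmonom}. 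Finally $\tilde{M}$ is irreducible: it is the image of the variety $\{(a,x_1,x_2)\in\mathcal{A}\times\mathbb{C}^2:x_1\neq x_2,\ P(x_1,a)=P(x_2,a)=P_*(x_1,a)=P_*(x_2,a)=0\}$, which fibres over $\{x_1\neq x_2\}\subset\mathbb{C}^2$ with fibres the affine $(n-4)$-dimensional subspaces of $\mathcal{A}$ defined by four linear equations in $a$ that are independent whenever $x_1\neq x_2$; so this variety, its image $\overline{\tilde{M}}$, and the projection of the latter to $\mathcal{A}^k$ are all irreducible, and $C_{m,k}$, the reduced equation of that projection, is irreducible and, having a different zero set, coprime to $B_{m,k}$.

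The chief obstacle is the bookkeeping in the second paragraph when $k\leq n-m$: proving that $\mathcal{C}$ is divisible by exactly $a_n^{m(n-1)}$ (so the residual monomial is precisely $A_{m,k}$ and no spurious power of $a_n$ survives in $C_{m,k}$), together with the multiplicity count — the Maxwell stratum entering with multiplicity exactly $2$, the $B$-locus with multiplicity exactly $1$. Both rest on the local analysis at $\{a_n=0\}$, resp. $\{a_{n-m}=0\}$, and on a transversality check at a generic point of $\tilde{M}$; everything else is a routine manipulation of resultants and symmetric functions.
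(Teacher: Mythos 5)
Your route is genuinely different from the paper's. The paper first establishes the a priori shape $\tilde{D}_{m,k}=A_{m,k}B_{m,k}^{s_{m,k}}C_{m,k}^{r_{m,k}}$ (Proposition~\ref{Amkprop} for the monomial factor, Lemma~\ref{lmsmooth} and Statements~\ref{stMaxwell}--\ref{stirred} for the rest) and then pins down $s_{m,k}=1$, $r_{m,k}=2$ through the base case $m=n-2$, a separate computation for $k=1$, and a double induction on $(n,k)$ and on $m$ carried by explicit deformations of the Sylvester matrix (Sections~\ref{secmn-2}--\ref{seccompletion}). You instead parametrize the roots $\alpha_i$ of $\tilde{D}_m$ in $a_k$ by the roots $y_i$ of $P_{m,k}$ (resp.\ $P_*$) and split $\prod_{i<j}(\alpha_i-\alpha_j)^2$ into $\mathrm{disc}(P_{m,k})$ times $\mathcal{C}^2$ over a monomial. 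The skeleton is sound: the correspondence $y_i\mapsto\alpha_i$ is a genuine bijection off the locus where $a_na_{n-m}$ or the leading coefficient vanishes, $\mathcal{C}$ is indeed a polynomial in $a^k$, and the exponent bookkeeping is consistent --- the requirement that the net power of $a_n$ be $n-m-k$ forces $v_{a_n}(\mathcal{C})=m(n-1)$, which matches your count (one root $y_1=\Theta(a_n)$, with $P_0(y_1)=y_1^m\bigl(P_*(y_1)-b_ka_ky_1^{n-m-k}\bigr)/b_k=\Theta(a_n^m)$, so each of the $n-1$ factors $\mathcal{G}(y_1,y_j)$ contributes $a_n^m$). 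If completed, this bypasses the paper's entire induction, which is a real gain.

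The gap is in the multiplicity count, which is exactly the hard content of the theorem, and your closing paragraph underestimates it. The factor $\mathcal{C}^2$ only shows that the Maxwell locus enters $\tilde{D}_{m,k}$ with an even exponent $2\delta$, where $\delta$ is the order of vanishing of $\mathcal{C}$ along the projection of $\overline{\tilde{M}}$ in $\mathcal{A}^k$; this order equals the number of pairs $(i,j)$ with $\alpha_i=\alpha_j$ over a generic point of that projection, times the vanishing order of each such $\mathcal{G}(y_i,y_j)$. The second factor equals $1$ by transversality of the two sheets (Statement~\ref{stMaxwell}: two transversal smooth graphs over $\mathcal{A}^k$ satisfy $d(\alpha_i-\alpha_j)\neq 0$), but the first factor being $1$ is the assertion that $\tilde{M}$ projects generically injectively onto its image in $\mathcal{A}^k$, and nothing in your argument addresses it: your incidence-variety computation proves irreducibility of the image, not injectivity of the map onto it, and if the projection were generically $2$-to-$1$ you would obtain $C_{m,k}^4$. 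The same point is what makes $\mathcal{C}/a_n^{m(n-1)}$ reduced, hence eligible to be the irreducible $C_{m,k}$ required in part (1). So the step you defer as ``a transversality check at a generic point of $\tilde{M}$'' conceals the theorem's central difficulty and needs a genuine argument --- for instance an explicit point of the projection with a single preimage, or a degree count of the kind the paper performs via quasi-homogeneous weights in Section~\ref{secmn-2} and then propagates by induction.
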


The paper is structured as follows. After some examples and remarks in 
Section~\ref{secexrem}, we justify in  
Section~\ref{Amksec} the form of the factor $A_{m,k}$, see 
Proposition~\ref{Amkprop}; Section~\ref{Amksec} begins with Lemma~\ref{lm2diag} 
which gives the form of the determinant of 
certain matrices that appear in the proof of Theorem~\ref{maintm}. 
Section~\ref{seclmst} contains  
Lemma~\ref{lmsmooth} and Statements~\ref{stMaxwell}, \ref{stTheta} 
and~\ref{stirred} (the latter claims that the factors $B_{m,k}$ and $C_{m,k}$ 
are irreducible). They  
imply that one has 
$\tilde{D}_{m,k}=A_{m,k}B_{m,k}^{s_{m,k}}C_{m,k}^{r_{m,k}}$, where $s_{m,k}$, $r_{m,k}$ 
$\in \mathbb{N}$, see 
Remark~\ref{remintermediate}. Thus after Section~\ref{seclmst} there 
remains to show only that 
$s_{m,k}=1$ and $r_{m,k}=2$. 
In Section~\ref{secmn-2} we prove Theorem~\ref{maintm} 
in the case $m=n-2$, see Proposition~\ref{propmn-2}. In Section~\ref{seck=1} 
we show that $s_{m,k}=1$. We finish the proof 
of Theorem~\ref{maintm} in Section~\ref{seccompletion}, by induction on $n$ 
and $m$, as follows. Statement~\ref{stA} deduces formula (\ref{formula}) 
for $n=n_0+1$, $k=k_0+1$ from formula (\ref{formula}) 
for $n=n_0$, $k=k_0$. Statement~\ref{stAA} justifies formula (\ref{formula}) 
for $n=n_0$, $2\leq m<n_0-2$, $k=1$ 
using formula (\ref{formula}) for $n=n_0$, $m=n_0-2$, $k=1$ 
(recall that the latter is justified in Section~\ref{secmn-2}).

{\bf Acknowledgement.} The author is deeply grateful to B.~Z. Shapiro from 
the University of Stockholm for having 
pointed out to him the importance to study discriminants 
and for the fruitful discussions of this subject.

\section{Examples and remarks\protect\label{secexrem}}

Although Theorem~\ref{maintm} speaks about the case $2\leq m\leq n-2$,  
our first example treats 
the case $m=1$ in order to show its differences with the case 
$2\leq m\leq n-2$:
  
\begin{ex}\label{exm=1}
{\rm For $n=3$, $m=1$ we set $P:=x^3+ax^2+bx+c$, $P_*:=x^2+Aax+Bb$, 
$0\neq A,B\neq 1$, $A\neq B$. Then} 

$$\begin{array}{lcl}
\tilde{D}_1&=&(1-A)B(B-A)a^2b^2+(3AB-A-2B)abc+c^2+
A^2(1-A)a^3c+B(1-B)^2b^3\\ \\ 
\tilde{D}_{1,1}&=&-A^2(A-1)^2\, c\, (-27A^2(1-A)c^2+4(A-B)^3b^3)\, 
(-Ac^2+(1-A)B^2(1-B)b^3)^2\\ \\ 
\tilde{D}_{1,2}&=&
-B^2(B-1)^2\, c\, (-27B(1-B)^2c+4(A-B)^3a^3)\, (-(1-B)c+A(1-A)^2Ba^3)^2\\ \\  
\tilde{D}_{1,3}&=&-(-4Bb+A^2a^2)\, ((1-B)b-A(1-A)a^2)^2~.\end{array}$$
{\rm The condition $P$ and $P_*$ to have two roots in common is 
tantamount to $P_*$ dividing $P$. One has $P=(x+a(1-A))P_*+W_1x+W_0$, where} 
$$W_1:=(1-B)b-A(1-A)a^2~~,~~W_0:=c-B(1-A)ab~.$$
{\rm The quadratic factors in the above presentations of $\tilde{D}_{1,k}$, 
$k=1$, $2$ and $3$, are obtained by eliminating respectively $a$, $b$ and $c$ 
from the system of equations $W_1=W_0=0$ which is the necessary and sufficient 
condition $P_*$ to divide $P$.

In the particular case $A=2/3$, $B=1/3$ (i.e. $P_*=P'/3$) one obtains}
$$\tilde{D}_{1,1}=(-2^6/3^{15})\, c\, (-27c^2+b^3)^3~~,~~
\tilde{D}_{1,2}=(-2^6/3^{15})\, c\, (-27c+a^3)^3~~,~~
\tilde{D}_{1,3}=(2^4/3^6)\, (3b-a^2)^3~.$$

%$$\begin{array}{lcrc}
%\tilde{D}_{1,1}&=&(-2^6/3^{15})\, c\, (-27c^2+b^3)^3&\\ 
%\tilde{D}_{1,2}&=&(-2^6/3^{15})\, c\, (-27c+a^3)^3&\\ 
%\tilde{D}_{1,3}&=&(2^4/3^6)\, (3b-a^2)^3&.
%\end{array}$$
\end{ex}

\begin{rems}\label{remcompare}
{\rm (1) For $n\geq 4$, $m=1$ and $P_*=P'$ 
a result similar to Theorem~\ref{maintm} holds 
true. Namely, if $n\geq 4$, then $\tilde{D}_{1,k}$ 
is of the form $A_{1,k}B_{1,k}^3C_{1,k}^2$, where for $m=1$ the polynomials 
$B_{m,k}$ and $C_{m,k}$ are defined in the same way as for $2\leq m\leq n-2$ 
(with $P_*=P'$), but  
$A_{1,k}=a_n^{\min (1,n-k)+\max (0,n-k-2)}$, see \cite{Ko2} and \cite{Ko3}. 
Hence for $m=1$ and $P_*=P^{(m)}$ 
there are two differences w.r.t. the case $m\geq 2$ -- the degree $3$ 
(instead of $1$) of 
$B_{1,k}$, and $A_{1,n-1}=a_n$ (instead of $A_{1,n-1}=1$). This difference can be 
assumed to stem 
from the fact that for $m=1$, if $P$ has a root of multiplicity $\geq 3$, then 
this is a root of multiplicity $\geq 2$ for $P'$. This explanation is 
detailed below and in Remark~\ref{remm=1}. 

%Example~\ref{exm=1} treats the case $n=3$, so there are no cubic factors 
%in the 
%presentations of the polynomials $\tilde{D}_{1,k}$. 
For $n=4$ and for generic values of $b_j$ the polynomials 
$\tilde{D}_{1,k}$, up to a constant nonzero factor, are of the form} 

$$\begin{array}{ccrcccrc}
\tilde{D}_{1,1}&=&(b_1/b_0)^3(1-b_1/b_0)^2\, a_4^2\, 
\tilde{B}_{1,1}\tilde{C}_{1,1}^2&,&\tilde{D}_{1,2}&=&
-(b_2/b_0)^2(1-b_2/b_0)^2\, a_4\, \tilde{B}_{1,2}\tilde{C}_{1,2}^2&,\\ \\  
\tilde{D}_{1,3}&=&
-(b_3/b_0)^2(1-b_3/b_0)^3\, a_4\, \tilde{B}_{1,3}\tilde{C}_{1,3}^2&,&
\tilde{D}_{1,4}&=&\tilde{B}_{1,4}\tilde{C}_{1,4}^2&,\end{array}$$
{\rm where the polynomials $\tilde{B}_{1,k}$ and $\tilde{C}_{1,k}$, when 
considered as polynomials in the variables $a_j$ and $b_j$, are irreducible. 
Set $b_1=3b_0/4$, $b_2=b_0/2$, $b_3=b_0/4$. This is the case $P_*=P'$; we 
write $\tilde{B}_{1,k}|_{b_1=3b_0/4, b_2=b_0/2, b_3=b_0/4}=B_{1,k}$ and 
$\tilde{C}_{1,k}|_{b_1=3b_0/4, b_2=b_0/2, b_3=b_0/4}=C_{1,k}$. In this case 
the polynomials $\tilde{C}_{1,k}$ become reducible; they equal 
$B_{1,k}C_{1,k}$ which explains the presence of the cubic factor $B_{1,k}^3$.

Thus for $m=1$ the genericity condition 
$0\neq b_j\neq b_i\neq 0$ (which we assume to hold true 
in the formulation of Theorem~\ref{maintm}) is not sufficient in order 
to have the presentation (\ref{formula}) for $\tilde{D}_{m,k}$. At the same 
time imposing a more restrictive condition means leaving outside the 
most interesting case $P_*=P'$.}

{\rm (2) For $m=n-1$ 
the analog of the factor $C_{m,k}$ does not exist because $P_*$ has a single 
root $-b_1/b_0$. For $P_*=P^{(n-1)}:=n!(x+a_1/n)$ this is $x=-a_1/n$. 
In this case one finds that 
$\tilde{D}_{n-1}=(-1)^n(n!)^nP(-a_1/n)$. 
To see this one subtracts 
for $j=1$, $\ldots$, $n$ the $j$th column of the Sylvester matrix 
$S(P,x+a_1/n)$ multiplied by $-a_1/n$ from its $(j+1)$st column. This yields 
an $(n+1)\times (n+1)$-matrix $W$ whose entry in position $(1,n+1)$ equals 
$P(-a_1/n)$ and which below the first row has units in positions 
$(\nu +1,\nu )$, $\nu =1$, $\ldots$, $n$, and zeros elsewhere. 
Hence $\det W=(-1)^nP(-a_1/n)$. There remains to remind that 
$\tilde{D}_{n-1}=\det S(P,n!(x+a_1/n))=(n!)^n\det W$.

One finds directly that 
$\tilde{D}_{n-1,k}=\partial \tilde{D}_{n-1}/\partial a_k=
(-1)^n(n!)^n(-a_1/n)^{n-k}$, 
$2\leq k\leq n$. To find also $\tilde{D}_{n-1,1}$ one first observes that 
$$P_{n-1,1}(x)/(n-1)!=-(n-1)x^n+a_2x^{n-2}+a_3x^{n-3}+\cdots +a_n$$ 
and that 
$P(-a_1/n)=P_{n-1,1}(-a_1/n)/(n-1)!$. Hence up to a nonzero rational factor the 
determinants of the matrices $S(P_{n-1,1},P_{n-1,1}')$ and 
$S(\tilde{D}_{n-1},\partial \tilde{D}_{n-1}/\partial a_1,a_1)$ coincide, i.e. 
$\tilde{D}_{n-1,1}=\hat{c}\, $Res$(P_{n-1,1},P_{n-1,1}')$, 
$\hat{c}\in \mathbb{Q}$.

(3) The fact that the factor $C_{m,k}$ is squared (see formula (\ref{formula})) 
is not astonishing. At a generic point of the Maxwell stratum the hypersurface 
$\{ \tilde{D}_m=0\} \subset \mathcal{A}$ is locally the intersection 
of two analytic hypersurfaces, see Statement~\ref{stMaxwell}. Consider a point 
$\Psi \in \mathcal{A}^k$ close to the projection $\Lambda _0$ 
in $\mathcal{A}^k$ of a generic 
point $\Lambda \in \tilde{M}$. There exist two points 
$K_j\in \{ \tilde{D}_m=0\}$, 
$j=1$, $2$, which belong to these hypersurfaces and are close to $\Lambda$, and 
whose common projection in $\mathcal{A}^k$ 
is $\Psi$.  
There exists 
a loop $\gamma \subset \mathcal{A}^k$, $\Psi \in \gamma$, 
which  circumvents the projection in $\mathcal{A}^k$ 
of the set $\overline{\Theta \cup \tilde{M}}$ such that if one follows 
the two liftings on $\{ \tilde{D}_m=0\}$ of the points of $\gamma$ which 
at $\Psi$ are the points $K_j$, then upon one tour along $\gamma$ these 
liftings are exchanged. Hence in order to define the projection of $\tilde{M}$ in 
$\mathcal{A}^k$ by the zeros of an analytic function one has to 
eliminate this monodromy of rank $2$ by taking the square of $C_{m,k}$. For 
the case $m=1$ a detailed construction of such a path $\gamma$ is given 
in~\cite{Ko3}.}
\end{rems}

\begin{figure}[htbp]
%\includegraphics[scale=0.5]{parthetanegfirstfour.eps}
%\centerline{\hbox{\includegraphics[scale=0.7]{parthetanegfirstfour.eps}}}
\centerline{\hbox{\includegraphics[scale=0.7]{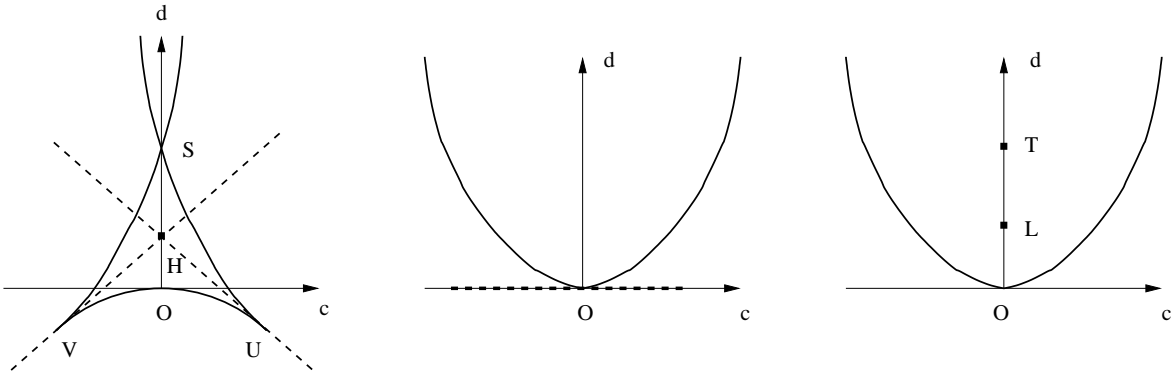}}}
%\centerline{\hbox{\epsfxsize=10cm \epsfbox{k=1234.pdf}}}
    \caption{The sets $\{ \tilde{D}_2=0\} |_{a=0,b=-1}$, 
$\{ \tilde{D}_2=0\} |_{a=b=0}$ 
and $\{ \tilde{D}_2=0\} |_{a=0,b=1}$ for $n=4$.}
\label{ss}
\end{figure}

\begin{ex}
{\rm For $n=4$ we consider the case of real polynomials. 
We write $P=x^4+ax^3+bx^2+cx+d$ and we limit ourselves to 
the situation when $P_*:=P^{(m)}$. 
On Fig.~\ref{ss} we show the sets $\{ \tilde{D}_1=0\} |_{a=0}$ 
and $\{ \tilde{D}_2=0\} |_{a=0}$ when $b$, $c$ and $d$ are real. 
The sets $\{ \tilde{D}_1=0\} |_{a=0}$ and $\{ \tilde{D}_2=0\} |_{a=0}$ 
are invariant under the 
quasi-homogeneous dilatations $a\mapsto ta$, $b\mapsto t^2b$, $c\mapsto t^3c$, 
$d\mapsto t^4d$, therefore the intersections of the sets with the subspaces 
$\{ b=0\}$ and $\{ b=\pm 1\}$ give a sufficient idea about them. For each of 
these three intersections we represent the axes $c$ and $d$, see Fig.~\ref{ss}. 
For $b=-1$ the set $\{ \tilde{D}_1=0\} |_{a=0}$ is a curve with one 
self-intersection point at $S$ and two ordinary $2/3$-cusps at $U$ and $V$; 
it is drawn in solid line. At $U$ and $V$ the polynomial $P$ 
has one triple and one simple real root. The set 
$\{ \tilde{D}_2=0\} |_{a=0,b=-1}$ consists of two 
straight (dashed) lines intersecting at $H$ and tangent to the set 
$\{ \tilde{D}_1=0\} |_{a=0}$ at the cusps $U$ and $V$.

The sets $\{ \tilde{D}_1=0\} |_{a=0,b=0}$ and $\{ \tilde{D}_1=0\} |_{a=0,b=1}$ are 
parabola-like curves, the former has a $4/3$-singularity at the origin while 
the latter is smooth everywhere. 
The set $\{ \tilde{D}_1=0\} |_{a=0,b=1}$ contains 
an isolated double point $T$. 
The set $\{ \tilde{D}_2=0\} |_{a=0,b=0}$ (resp. 
$\{ \tilde{D}_2=0\} |_{a=0,b=1}$) is the $c$-axis (resp. the point $L$). 
The points $S$, $T$ and for $b=0$ the origin 
belong to a parabola (because the quasi-homogeneous weights of the 
variables $a_2$ and $a_4$ equal $2$ and $4$ respectively). 
So do the points $H$, $L$ 
and the origin for $b=0$. At $S$ (resp. $T$) the polynomial $P$ has two 
real (resp. two imaginary conjugate) double roots. At $H$ and $L$ the 
polynomial $P$ is divisible by $P''$.  

Globally the set $\{ \tilde{D}_2=0\} |_{a=0}$ is diffeomorphic 
to a Whitney umbrella. The set $\{ \tilde{D}_2=0\} |_{a=0}$ is smooth 
along the $c$-axis for $b=0$ (except at the origin) 
and its tangent plane is the $cd$-plane.}
\end{ex}

\section{The factor $A_{m,k}$\protect\label{Amksec}}

The following lemma will be used in several places of this paper:

\begin{lm}\label{lm2diag}
Consider a $p\times p$-matrix $A$ having nonzero entries only 

i) on the diagonal (denoted by $r_j$, 
in positions $(j,j)$, $j=1$, $\ldots$, $p$);  

ii) in positions 
$(\nu ,\nu +s)$, $\nu =1$, $\ldots$, $p-s$ (denoted by $q_{\nu}$), 
$1\leq s\leq p-1$, and 

iii) in positions $(\mu +p-s,\mu )$, $\mu =1$, $\ldots$, $s$ (denoted by 
$q_{\mu +p-s}$). 

Then $\det A=r_1\cdots r_p\pm q_1\cdots q_p$.
\end{lm}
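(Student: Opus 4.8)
The plan is to compute $\det A$ by the Leibniz formula $\det A=\sum_{\sigma\in S_p}\operatorname{sgn}(\sigma)\prod_{j=1}^p A_{j,\sigma(j)}$ and to show that at most two permutations $\sigma$ contribute a nonzero term. Indeed, since the only nonzero entries of $A$ lie in positions $(j,j)$, $(\nu,\nu+s)$ and $(\mu+p-s,\mu)$, a permutation $\sigma$ contributes nothing unless for every row $j$ the column $\sigma(j)$ is one of the (at most two) columns in which row $j$ has a nonzero entry: explicitly $\sigma(j)\in\{j,\,j+s\}$ for $1\le j\le p-s$, $\sigma(j)=j$ for $p-s<j\le p-s$ (there is no such $j$), and $\sigma(j)\in\{j,\,j-(p-s)\}$ for $p-s< j\le p$. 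So $\sigma$ is built by choosing, for each applicable row, either the ``diagonal'' option or the ``shift'' option.

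The key combinatorial step is to argue that only two such choices yield a genuine permutation: either one takes the diagonal option in every row (giving $\sigma=\mathrm{id}$ and the term $r_1\cdots r_p$), or one takes the shift option in every applicable row. In the latter case rows $1,\dots,p-s$ map to columns $1+s,\dots,p$ and rows $p-s+1,\dots,p$ map to columns $1,\dots,s$; this is exactly the cyclic-type permutation sending $j\mapsto j+s \pmod p$, it is a bijection, and its product of entries is $q_1\cdots q_p$. To see no ``mixed'' choice works: if some row $j\le p-s$ takes the shift option ($\sigma(j)=j+s$) while some other row takes the diagonal option, one traces the forced consequences — column $j$ must still be hit by some row, and the only rows that can hit column $j$ are row $j$ itself (excluded) and row $j+p-s$ (if $j\le s$), which forces that row to take its shift option too; propagating this around the index set modulo $p$ forces \emph{all} applicable rows into the shift option, a contradiction. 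Equivalently: the ``shift'' entries occupy a set of positions that, together with the diagonal, forms the union of two permutation matrices intersecting only where a row has its unique nonzero entry, so any valid $\sigma$ is one of these two matrices. Finally I record the sign: $\mathrm{id}$ contributes $+r_1\cdots r_p$, and the shift permutation $j\mapsto j+s\pmod p$ has sign $(-1)^{s(p-s)}$ (it is a product of $\gcd(s,p)$ cycles, or one counts inversions directly), which accounts for the $\pm$ in front of $q_1\cdots q_p$.

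The main obstacle is the bookkeeping in the ``no mixed choice'' argument: one must handle the two ranges of rows ($j\le p-s$ with options $\{j,j+s\}$, and $j>p-s$ with options $\{j,j-(p-s)\}$) uniformly, and the cleanest way is probably to read all column indices modulo $p$, so that every applicable row $j$ has options $\{j,\,j+s \bmod p\}$ and the shift permutation is literally $j\mapsto j+s \pmod p$; then a ``mixed'' $\sigma$ would have to be a bijection agreeing with $\mathrm{id}$ somewhere and with the $s$-shift somewhere, which is impossible unless the shifts used form a union of full residue classes exhausting $\{1,\dots,p\}$, i.e. all of them. Once this is set up the rest is routine.
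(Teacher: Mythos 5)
Your proof takes a genuinely different route from the paper's: you classify the permutations that survive in the Leibniz expansion, while the paper iterates cofactor expansions starting from the first row. The setup is correct (each row $j$ offers exactly the two columns $j$ and $j+s\bmod p$, so a contributing permutation is determined by the set $T$ of rows choosing the shift), and your sign $(-1)^{s(p-s)}$ for the $s$-shift is right. However, the decisive step --- ``propagating this around the index set modulo $p$ forces \emph{all} applicable rows into the shift option'' --- is valid only when $\gcd(s,p)=1$. The forced implication is $j\in T\Rightarrow j-s\bmod p\in T$, and iterating it closes up only the coset $j+\langle s\rangle=\{i:\,i\equiv j\pmod{\gcd(s,p)}\}$, not all of $\{1,\dots,p\}$. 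If $d:=\gcd(s,p)>1$, every nonempty union of such cosets is an admissible $T$; there are $2^d$ contributing permutations and
$$\det A=\prod_{c=1}^{d}\Bigl(\prod_{j\in C_c}r_j\ \pm\ \prod_{j\in C_c}q_j\Bigr)$$
over the $d$ cosets $C_c$. For $p=4$, $s=2$ the matrix is
$$\left(\begin{array}{cccc}r_1&0&q_1&0\\ 0&r_2&0&q_2\\ q_3&0&r_3&0\\ 0&q_4&0&r_4\end{array}\right),\qquad
\det A=(r_1r_3-q_1q_3)(r_2r_4-q_2q_4),$$
which is not of the form $r_1r_2r_3r_4\pm q_1q_2q_3q_4$. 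Your closing sentence half-detects the issue (``a union of full residue classes'') but then asserts without justification that such a union must exhaust $\{1,\dots,p\}$; that holds only if there is a single residue class, i.e. $\gcd(s,p)=1$.

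To be fair, the lemma as stated omits the hypothesis $\gcd(s,p)=1$, and the paper's own proof has exactly the same gap: its chain of $p-1$ expansions along rows or columns carrying an entry $r_j$ but no entry $q_\nu$ likewise only traverses the coset of $1$ and stalls on the remaining cosets when $d>1$. So your argument is precisely as correct as the paper's. To make it airtight, either add the hypothesis $\gcd(s,p)=1$ (then the coset is everything and your propagation does close), or prove the product-over-cosets formula displayed above, which your Leibniz approach yields with no additional work and which specializes to the stated two-term formula exactly when $d=1$. Apart from this shared defect, your route is the cleaner one and gives strictly more information, namely an explicit sign where the paper leaves only $\pm$.
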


\begin{proof}
Developing $\det A$ w.r.t. its first row one obtains the equality

$$\det A=r_1\det B+(-1)^{s+2}q_1\det C~~,~~{\rm where}~~\, \, 
B=[A]_{1,1}~~\, \, {\rm and}~~\, \, C=[A]_{1,s+1}~.$$
The matrix $B$ contains $p-1$ entries $r_j$ (namely, $r_2$, $\ldots$, $r_p$) 
and $p-2$ entries $q_{\nu}$ (the ones with $1\neq \nu \neq 1+p-s$). In the same 
way, the matrix $C$ contains $p-2$ entries $r_j$ ($1\neq j\neq s+1$) 
and $p-1$ entries $q_{\nu}$ ($\nu \neq 1$). 

When finding $\det B$ one can develop it w.r.t. 
that row or column in which there is an entry $r_j$ and there is no entry 
$q_{\nu}$. By doing so $p-1$ times one finds that 
$\det B=r_2\cdots r_p$. The $+$ sign of this product follows from 
the entries $r_j$ being situated on the diagonal. When finding $\det C$ one can 
develop it w.r.t. that row or column in which there is an entry $q_{\nu}$ 
and there is no entry $r_j$. By doing so $p-1$ times one finds that 
$\det C=\pm q_2\cdots q_p$ which proves the lemma.
\end{proof}

In the present section we prove the following proposition:

\begin{prop}\label{Amkprop}
(1) For $k=n-m+1$, $\ldots$, $n$, the polynomial $\tilde{D}_{m,k}$ is 
not divisible by any of the variables $a_j$, $j\neq n-m$.

(2) For $k=1,\ldots ,n-m$, the polynomial $\tilde{D}_{m,k}$ is 
not divisible by any of the variables $a_j$, $j\neq n$.

(3) For $k=1$, $\ldots$, $n-m$, the polynomial $\tilde{D}_{m,k}$ is divisible by 
$a_n^{n-m-k}$ and not divisible by $a_n^{n-m-k+1}$.

(4) For $k=n-m+1$, $\ldots$ ,$n$, it is divisible by 
$a_{n-m}^{n-k}$ and not divisible by $a_{n-m}^{n-k+1}$. 
\end{prop}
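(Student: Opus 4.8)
The plan is to work throughout with the Sylvester matrix $S(P,P_*)$ and the row operations already introduced in the proof of Proposition~\ref{propmonom}. The key structural fact I would exploit is that $\tilde D_m$, as a polynomial in $a_n$ (resp. in $a_{n-m}$), has a specific lowest-order monomial in that variable, controlled by the monomials $M_j$ and $N_s$; once the lowest power of $a_n$ (resp. $a_{n-m}$) dividing $\tilde D_m$ is known, the corresponding statement for $\tilde D_{m,k}=\mathrm{Res}(\tilde D_m,\partial\tilde D_m/\partial a_k,a_k)$ follows from the behaviour of the resultant under specialization $a_j=0$.

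First I would prove parts (1) and (2). Fix $j$ with $j\neq n-m$ (case of part (1)) or $j\neq n$ (case of part (2)). Setting $a_j=0$, I claim $\tilde D_m|_{a_j=0}$ is not identically zero and, more importantly, that $\mathrm{Res}(\tilde D_m,\partial\tilde D_m/\partial a_k,a_k)$ does not acquire a factor of $a_j$. The clean way to see this is: $a_j$ divides $\tilde D_{m,k}$ iff the two plane curves $\{\tilde D_m=0\}$ and $\{\partial\tilde D_m/\partial a_k=0\}$ (in the $(a_j,a_k)$-plane, other variables generic) have a common branch along $a_j=0$, equivalently iff $a_j$ divides $\tilde D_m$ as a polynomial in $a_k$ over $\mathbb C[a^{\{j,k\}}]$ — which happens iff $a_j\mid \tilde D_m$. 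So parts (1) and (2) reduce to: $\tilde D_m$ is not divisible by $a_j$ for $j\neq n-m$, and not divisible by $a_j$ for $j\neq n$. But this is immediate from Proposition~\ref{propmonom}: the monomial $M_j=\pm b_j^n a_j^n(1-b_0/b_j)^j a_n^{n-m-j}$ (for $1\le j\le n-m$) and $N_s$ (for the high indices), together with the monomial $M_{n-m}$ which contains $a_{n-m}^n a_n^{0}$ and hence a pure power of $a_n$ with no $a_j$, $j\neq n$, show that no single variable other than the advertised one divides $\tilde D_m$. (More carefully: $\tilde D_m$ contains the monomial $\pm b_{n-m}^n a_{n-m}^n$ with no other $a_j$, ruling out divisibility by $a_j$ for $j\neq n-m$; and it contains a monomial that is a pure power of $a_n$, namely from the $j=n-m$ term of the $M$-family degenerating appropriately, or one checks directly from $S(P,P_*)$ that the coefficient of the top $a_n$-power is a nonzero constant, ruling out divisibility by $a_j$ for $j\neq n$ once we also note $\tilde D_m$ is not divisible by $a_n$ to high order — see below.)

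Next, parts (3) and (4), which are the substantive ones. Consider $1\le k\le n-m$; I want the exact power of $a_n$ dividing $\tilde D_{m,k}$. Write $\tilde D_m=\sum_{i\ge 0}a_n^{\,i}E_i(a^n)$ with $E_{i_0}\neq 0$, $i_0$ minimal. From the monomial list, the $a_n$-degree considerations give $i_0=0$: indeed $\tilde D_m$ contains $N_1$, a monomial with $a_n$ to the power $0$, so $a_n\nmid\tilde D_m$, i.e. $i_0=0$ and $E_0\neq0$. Now Res$(\tilde D_m,\partial\tilde D_m/\partial a_k,a_k)$ viewed mod powers of $a_n$: substituting $a_n=0$ kills all terms with $a_n$, so $\tilde D_m|_{a_n=0}=E_0$, and one must track how $a_k$ enters $E_0$. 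Here is where the monomial $M_k=\pm b_k^n a_k^n(1-b_0/b_k)^k a_n^{n-m-k}$ is decisive: it tells us that the coefficient of $a_k^n$ in $\tilde D_m$ is $\Omega\, a_n^{n-m-k}$, i.e. when we regard $\tilde D_m$ as a polynomial of degree $n$ in $a_k$ (Proposition~\ref{propmonom}), its leading coefficient $\mathrm{lc}_{a_k}(\tilde D_m)$ is divisible by exactly $a_n^{n-m-k}$ (exactly, because $M_k$ is the \emph{only} monomial containing $a_k^n$, so that leading coefficient equals $\Omega a_n^{n-m-k}$ times a polynomial not divisible by $a_n$ — in fact a nonzero constant, by the last sentence of Proposition~\ref{propmonom}, so it is literally $\Omega a_n^{n-m-k}$). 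Then I use the standard formula: $\mathrm{Res}(F,F_{a_k},a_k) = \mathrm{lc}_{a_k}(F)^{\,?}\cdot(\text{discriminant-type expression})$; more precisely Res$(F, \partial F/\partial a_k, a_k) = \pm\,\mathrm{lc}_{a_k}(F)\cdot \mathrm{disc}_{a_k}(F)$ up to the usual degree bookkeeping, and $\mathrm{disc}_{a_k}(F) = \pm\,\mathrm{lc}_{a_k}(F)^{2n-2}\prod_{i<j}(\alpha_i-\alpha_j)^2$ in the roots $\alpha_i$ of $F=\tilde D_m$ in $a_k$. Since the product of squared root-differences, when multiplied out, contributes no negative powers of $a_n$ and (generically in the remaining variables) no positive power either, the exact power of $a_n$ in $\tilde D_{m,k}$ is the exact power of $a_n$ in a suitable power of $\mathrm{lc}_{a_k}(\tilde D_m)=\Omega a_n^{n-m-k}$. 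Tracking the exponent via the Sylvester determinant of $S(\tilde D_m,\partial\tilde D_m/\partial a_k)$ — which has $n$ rows built from $\tilde D_m$ and $n-1$ from its derivative, each row of $\tilde D_m$ contributing one factor $\mathrm{lc}_{a_k}$ when we expand along the $a_k^n$-columns after clearing — yields the total power $a_n^{\,n-m-k}$ exactly, giving part (3).

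The argument for part (4), with $n-m+1\le k\le n$, is entirely parallel: replace the monomial $M_k$ by $N_{k-(n-m)} = \pm b_{n-m}^{\,m-s}a_{n-m}^{\,m-s}b_0^{\,n-m+s}a_{n-m+s}^{\,n-m}$ with $s=k-(n-m)$, which is the unique monomial containing $a_k^{n-m}=a_{n-m+s}^{\,n-m}$ and hence tells us that the leading coefficient of $\tilde D_m$ in $a_k$ (degree $n-m$ in $a_k$, by Proposition~\ref{propmonom}) equals $\Omega\, a_{n-m}^{\,m-s}=\Omega\, a_{n-m}^{\,n-k}$. The same discriminant/Sylvester bookkeeping then produces the exact power $a_{n-m}^{\,n-k}$ in $\tilde D_{m,k}$, and the non-divisibility by $a_{n-m}^{\,n-k+1}$ follows because the "product of squared differences" part, evaluated at generic values of the other variables, is a unit with respect to $a_{n-m}$ — this is where I must rule out the coincidence that \emph{all} roots of $\tilde D_m$ in $a_k$ collapse as $a_{n-m}\to0$, which I would do by exhibiting, from the explicit monomials of $\tilde D_m$, two roots staying distinct in that limit (equivalently, checking $\tilde D_m|_{a_{n-m}=0}$ is not a perfect power in $a_k$, using that it still contains monomials with $a_k$-degrees $0$ and $n-m$ whose coefficients are algebraically independent in the remaining variables).

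\textbf{Main obstacle.} The delicate point is the \emph{sharpness} in (3) and (4) — proving $a_n^{\,n-m-k+1}$ (resp. $a_{n-m}^{\,n-k+1}$) does \emph{not} divide $\tilde D_{m,k}$. Divisibility by the stated power is soft (it follows from $a_n^{n-m-k}\mid\mathrm{lc}_{a_k}\tilde D_m$ and the behaviour of resultants), but the reverse inequality requires showing that no extra cancellation occurs, i.e. that the "generic" part of the discriminant of $\tilde D_m$ in $a_k$ does not secretly carry another factor of $a_n$ (resp. $a_{n-m}$). I expect to handle this by the monomial bookkeeping of Proposition~\ref{propmonom}: specialize all variables except $a_k$ and $a_n$ (resp. $a_{n-m}$), choose the specialization so that $\tilde D_m$ restricted to $a_n=0$ (resp. $a_{n-m}=0$) is a polynomial in $a_k$ with at least two distinct roots — which is visible because the restricted polynomial retains monomials of $a_k$-degree $0$ and of $a_k$-degree $n$ (resp. $n-m$) with coefficients that are not proportional — and conclude that $\mathrm{disc}_{a_k}(\tilde D_m)|_{a_n=0}\neq0$, hence the only source of $a_n$ in $\tilde D_{m,k}$ is the leading coefficient, contributing exactly $n-m-k$. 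Writing out this non-vanishing cleanly, keeping track of the correct power of $\mathrm{lc}_{a_k}$ in the resultant-versus-discriminant conversion (the exponent is $\deg_{a_k}\tilde D_m - \deg_{a_k}(\partial\tilde D_m/\partial a_k) = 1$ in the resultant, and $2\deg_{a_k}\tilde D_m-2$ in the discriminant), is the one place where care is genuinely needed.
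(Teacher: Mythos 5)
Your reduction of parts (1) and (2) to the non-divisibility of $\tilde D_m$ itself by $a_j$ is where the argument breaks. The claim ``$a_j$ divides $\tilde D_{m,k}$ iff $a_j\mid\tilde D_m$'' is false: $\tilde D_{m,k}|_{a_j=0}=0$ can occur whenever the specialized polynomial $\tilde D_m|_{a_j=0}$ acquires a multiple root in $a_k$ over the field of the remaining variables (or when its $a_k$-degree drops in both leading positions), even though $a_j\nmid\tilde D_m$. A minimal counterexample to your equivalence is $F=a_k^2-a_j$: here $a_j\nmid F$ but $\mathrm{Res}(F,\partial F/\partial a_k,a_k)=\pm 4a_j$. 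So the substantive content of (1) and (2) is precisely to rule out such hidden collisions, and this cannot be read off from the monomials $M_j$, $N_s$ alone. The paper handles it by specializing \emph{all} variables except two to zero, computing $\tilde D_m$ in closed form under that specialization (it becomes a two-term quasi-binomial in $a_k$, via Lemma~\ref{lm2diag}), and then verifying by hand that the resulting resultant is a nonzero monomial; that explicit squarefreeness check is the step your proposal skips.

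For parts (3) and (4) your overall shape is right and matches the paper's: the uniqueness of $M_k$ (resp.\ $N_s$) pins the leading coefficient of $\tilde D_m$ in $a_k$ down to $\Omega a_n^{n-m-k}$ (resp.\ $\Omega a_{n-m}^{n-k}$), which gives divisibility by the stated power, and sharpness reduces to a non-vanishing statement at $a_n=0$ (resp.\ $a_{n-m}=0$). But two points in your bookkeeping are off. First, the assertion that the product $\prod_{i<j}(\alpha_i-\alpha_j)^2$ ``contributes no negative powers of $a_n$'' is wrong: since the leading coefficient vanishes at $a_n=0$, one root escapes to infinity like $a_n^{-(n-m-k)}$, so that product carries order $a_n^{-2(n-m-k)(n-1)}$, cancelling against $\mathrm{lc}^{2n-2}$; as written, your accounting would give divisibility by $a_n^{(n-m-k)(2n-1)}$, contradicting your own conclusion. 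Second, the non-vanishing you need is stronger than ``at least two distinct roots'' or ``not a perfect power'': you need that the subleading coefficient (the coefficient of $a_k^{n-1}$) survives at $a_n=0$ \emph{and} that $\tilde D_m|_{a_n=0}$, of degree $n-1$ in $a_k$, has \emph{all} its roots distinct; a polynomial like $a_k^2(a_k-1)$ has two distinct roots yet zero discriminant. The paper establishes exactly these two facts (Lemma~\ref{lmY0} and the computation showing $\det M^{\dagger}=(1-b_0/b_k)^k(b_ka_k)^{n-1}\pm b_0^{n-1}a_{n-1}^k$ has $n-1$ distinct roots) by expanding the Sylvester determinant along its first column rather than invoking the resultant--discriminant identity, which sidesteps the root-at-infinity subtlety altogether. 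Your ``main obstacle'' paragraph correctly locates where the difficulty lies, but the criterion you propose to resolve it is not sufficient.
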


%\begin{st} part (1)
%For $k=n-m+1$, $\ldots$, $n$ the polynomial $\tilde{D}_{m,k}$ is 
%not divisible by any of the variables $a_j$, $j\neq n-m$.
%\end{st}

\begin{proof}[Proof of part (1):]
We show first that for $a_i=0$, $n-m\neq i\neq k$, the polynomial $\tilde{D}_m$ 
is of the form $\Omega 'a_{n-m}^n+\Omega ''a_{n-m}^{n-k}a_k^{n-m}$. 
Indeed, in this case one can list 
the nonzero entries of the 
$(2n-m)\times (2n-m)$-matrix $S(P,P_*)$ 
and the positions in which they are situated: 

$$\begin{array}{ccccccl}
1&(j,j)&,&a_{n-m}&(j,j+n-m)&,&\\ \\ 
a_k&(j,j+k)&&&&,&j=1,\ldots ,n-m~,\\ \\ 
b_0&(\nu +n-m,\nu )&,&b_{n-m}a_{n-m}&
(\nu +n-m,\nu +n-m)&,&
\nu =1,\ldots ,n~.\end{array}$$ 
Subtract the $(\mu +n-m)$th row multiplied by $1/b_{n-m}$ from the $\mu$th 
one for $\mu =1$, $\ldots$, $n-m$. This makes disappear the terms 
$a_{n-m}$ in 
positions $(j,j+n-m)$ while the terms $1$ in positions $(j,j)$ become 
equal to $\Omega _*:=1-b_0/b_{n-m}$. 
The determinant of the matrix doesn't change. We 
denote the new matrix by $T$. 
To compute $\det T$ one can develop it $n-k$ times 
w.r.t. the last column; each time one has a single nonzero entry 
in this column, this is $b_{n-m}a_{n-m}$ in  
position $(2n-m-\ell ,2n-m-\ell )$, $\ell =0$, $\ldots$, $n-k-1$.
The matrix $T_1$ which remains after deleting 
the last $n-k$ rows and columns of 
$T$ has the following nonzero entries, in the following positions:

$$\begin{array}{ccccccl}
\Omega _*&(j,j)&,&a_k&(j,j+k)&,&j=1,\ldots ,n-m~,\\ \\ 
b_0&(\nu +n-m,\nu )&,&b_{n-m}a_{n-m}&
(\nu +n-m,\nu +n-m)&,&
\nu =1,\ldots ,k~.\end{array}$$ 
Clearly $\det T=(b_{n-m}a_{n-m})^{n-k}\det T_1$. 
On the other hand the matrix $T_1$ satisfies the conditions of 
Lemma~\ref{lm2diag} with $p=n-m+k$ and $s=k$. Hence 
$\det T_1=\tilde{\Omega}'a_{n-m}^k+\tilde{\Omega}''a_k^{n-m}$ and  
$\tilde{D}_m=\Omega 'a_{n-m}^n+\Omega ''a_{n-m}^{n-k}a_k^{n-m}$.

But then the $(2n-2m-1)\times (2n-2m-1)$-Sylvester matrix 
$S^*:=S(\tilde{D}_m,\partial \tilde{D}_m/\partial a_k,a_k)$ 
has only the following 
nonzero entries, in the following positions:

$$\begin{array}{ccccccl}
\Omega ''a_{n-m}^{n-k}&(j,j)&,&
\Omega 'a_{n-m}^n&(j,j+n-m)&,&j=1,\ldots ,n-m-1~,\\ \\ 
(n-m)\Omega ''a_{n-m}^{n-k}&(\nu +n-m-1,\nu )&,&&&,&
\nu =1\ldots ,n-m ~.\end{array}$$
Part (1) follows from  
$\det S^*=
\pm (\Omega 'a_{n-m}^n)^{n-m-1}((n-m)\Omega ''a_{n-m}^{n-k})^{n-m}\not\equiv 0$.
\end{proof}

%\begin{st} part (2)
%For $k=1,\ldots ,n-m$, the polynomial $\tilde{D}_{m,k}$ is 
%not divisible by any of the variables $a_j$, $j\neq n$.
%\end{st}

\begin{proof}[Proof of part (2):]
We prove that for $a_i=0$, $k\neq i\neq n$, the polynomial $\tilde{D}_m$ 
is of the form $\Omega ^{\dagger}a_n^{n-m}+\Omega ^{\dagger \dagger}a_n^{n-m-k}a_k^n$. 
Indeed, we list below the nonzero entries of the matrix $S(P,P_*)$ 
and their positions:

$$\begin{array}{ccccccccl}
1&(j,j)&,&\hspace{8mm}&a_k&(j,j+k)&\hspace{8mm}&,&\\ \\ 
a_n&(j,j+n)&&&&&&,&j=1,\ldots ,n-m~,\\ \\ 
b_0&(\nu +n-m,\nu )&,&&b_ka_k&
(\nu +n-m,\nu +k)&&,&
\nu =1,\ldots ,n~.\end{array}$$ 
One can develop $n-m-k$ times $\det S(P,P_*)$ w.r.t. its last column, 
where the only nonzero entries equal $a_n$. Thus 
$\det S(P,P_*)=\pm a_n^{n-m-k}\det H$, where $H$ is obtained from 
$S(P,P_*)$ by deleting the last $n-m-k$ columns and the rows with indices 
$k+1$, $\ldots$, $n-m$. The matrix $H$ has the following nonzero entries, in 
the following positions:

$$\begin{array}{ccccccccl}
1&(j,j)&,&\hspace{8mm}&a_k&(j,j+k)&&,&\\ \\
a_n&(j,j+n)&&&&&\hspace{8mm}&,&j=1,\ldots ,k~,\\ \\ 
b_0&(\nu +k,\nu )&,&&b_ka_k&
(\nu +k,\nu +k)&&,&
\nu =1,\ldots ,n~.\end{array}$$
For $\mu =1$, $\ldots$, $k$ one can subtract the $(\mu +k)$th row multiplied 
by $1/b_k$ from the $\mu$th one to make disappear the terms $a_k$ in 
positions $(\mu ,\mu +k)$; the entries $1$ in positions $(\mu ,\mu )$ change 
to $\Omega ^*:=1-b_0/b_k$. We denote the newly obtained matrix by $H_1$. 
Obviously 
$\det H_1=\det H$; we list the nonzero entries of $H_1$ and their respective 
positions:

$$\begin{array}{ccccccccl}
\Omega ^*&(j,j)&,&\hspace{8mm}&a_n&(j,j+n)&\hspace{8mm}&,&j=1,\ldots ,k~,\\ \\  
b_0&(\nu +k,\nu )&,&&b_ka_k&
(\nu +k,\nu +k)&&,&
\nu =1,\ldots ,n~.\end{array}$$
One applies Lemma~\ref{lm2diag} with $p=n+k$, $s=n$ 
to the matrix $H_1$ to conclude that 
$\det H=\det H_1=(\Omega ^*)^k(b_ka_k)^n\pm b_0^na_n^k$, so 
$\tilde{D}_m=\det S(P,P_*)=
\Omega ^{\dagger}a_n^{n-m}+\Omega ^{\dagger \dagger}a_n^{n-m-k}a_k^n$. 

But then the $(2n-1)\times (2n-1)$-Sylvester matrix 
$S(\tilde{D},\partial \tilde{D}/\partial a_k,a_k)$ has only the following 
nonzero entries, in the following positions:

$$\begin{array}{ccccccccl}
\Omega ^{\dagger \dagger}a_n^{n-m-k}&(j,j)&,&\hspace{8mm}&
\Omega ^{\dagger}a_n^{n-m}&(j,j+n)&\hspace{8mm}&,&j=1,\ldots ,n-1~,\\ \\ 
n\Omega ^{\dagger \dagger}a_n^{n-m-k}&(\nu +n-1,\nu )&&&&&&,
&\nu =1,\ldots ,n~.\end{array}$$
Its determinant equals 
$\pm (\Omega ^{\dagger}a_n^{n-m})^{n-1}(n\Omega ^{\dagger \dagger}a_n^{n-m-k})^n
\not\equiv 0$ which proves part (2).
\end{proof}

\begin{proof}[Proof of part (3):]
For $k=1$, $\ldots$ ,$n-m$ the polynomial 
$\tilde{D}_m$ contains the monomial 
$M_k:=\pm b_k^na_k^n(1-b_0/b_k)^ka_n^{n-m-k}$, and it  
does not contain any other monomial of 
the form $\Omega a_k^nE$, where $E$ is a product of powers 
of variables $a_i$ with $i\neq k$, see Proposition~\ref{propmonom}.  
%Indeed, one can first subtract the 
%$(\mu +n-m)$th row of $S(P,P_*)$ 
%multiplied by $1/b_k$ from the $\mu$th one, $\mu =1$, $\ldots$, $n-m$, 
%to make disappear the terms $a_k$ in the first $n-m$ rows. Denote by $S^0$ 
%the newly obtained matrix; one has $\det S(P,P_*)=\det S^0$. 
%To obtain a monomial $\Omega a_k^nE$ in $\det S^0$ 
%one has to multiply all the terms $b_ka_k$ from the last $n$ rows of $S^0$. 
%The product $\Omega E$ is a product of entries of $\det S^{0*}$, where the  
%$(n-m)\times (n-m)$-matrix $S^{0*}$ is obtained from $S^0$ 
%by deleting the last $n$ rows and the 
%columns of the entries $b_ka_k$. The matrix $S^{0*}$ is block-diagonal, 
%with diagonal blocks $k\times k$ and $(n-m-k)\times (n-m-k)$, 
%its first (resp. second) diagonal block is upper- (resp. lower-) triangular 
%and the diagonal entries equal $1-b_0/b_k$ (resp. $a_n$); $\det S^{0*}$ equals 
%the product of these diagonal entries. Hence $M_k$ is the only 
%monomial of $\tilde{D}_m$ of the form $\Omega a_k^nE$.

Hence the first column of the $(2n-1)\times (2n-1)$-matrix 
$Y:=S(\tilde{D}_m,\partial \tilde{D}_m/\partial a_k,a_k)$ contains only two 
nonzero entries, and these are $Y_{1,1}=\pm b_k^n(1-b_0/b_k)^ka_n^{n-m-k}$ and 
$Y_{n,1}=\pm nb_k^n(1-b_0/b_k)^ka_n^{n-m-k}$. Thus 
$\Delta :=\det Y$  
is divisible by $a_n^{n-m-k}$. We consider two cases:

{\em Case 1:} $k=n-m$. We have to prove that $\tilde{D}_{m,n-m}|_{a_n=0}\not\equiv 0$. 
Set $a_j=0$ for $n-m\neq j\neq n-1$. Hence the nonzero entries of the matrix 
$S(P,P_*)$ and their positions are 

$$\begin{array}{ccccccl}
1&(j,j)&,&a_{n-m}&(j,j+n-m)&,&\\ \\ 
a_{n-1}&(j,j+n-1)&,&&&&j=1,\ldots ,n-m~,\\ \\ 
b_0&(\nu +n-m,\nu )&,&b_{n-m}a_{n-m}&(\nu +n-m,\nu +n-m)&,&\nu =1,\ldots ,n~.
\end{array}$$
One can subtract the $(j+n-m)$th row multiplied by $1/b_{n-m}$ from the 
$j$th one, $j=1$, $\ldots$, $n-m$, to make disappear the terms $a_{n-m}$ 
in the first $n-m$ rows. This doesn't change $\det S(P,P_*)$. The terms 
$1$ in positions $(j,j)$ are replaced by $1-b_0/b_{n-m}$. 
Hence $\tilde{D}_m$ is of the form  
$\Omega _1a_{n-m}^n+\Omega _2a_{n-1}^{n-m}a_{n-m}$ (one first develops 
$\det S(P,P_*)$ w.r.t. the last column, where there is a single nonzero 
entry $b_{n-m}a_{n-m}$ in position $(2n-m,2n-m)$, and then applies 
Lemma~\ref{lm2diag} with $p=2n-m-1$ and $s=n-1$). 

Thus the matrix 
$S^H:=S(\tilde{D}_m,\partial \tilde{D}_m/\partial a_{n-m},a_{n-m})$ 
contains only the following nonzero entries, in the following positions:

$$\begin{array}{ccccccl}
\Omega _1&(j,j)&,&\Omega _2a_{n-1}^{n-m}&(j,j+n-1)&,&j=1,\ldots ,n-1~,\\ \\ 
n\Omega _1&(\nu +n-1,\nu )&,&\Omega _2a_{n-1}^{n-m}&(\nu +n-1,\nu +n-1)&,&
\nu =1,\ldots ,n~.\end{array}$$
One can subtract the $(j+n-1)$st row from the $j$th one, $j=1$, $\ldots$, 
$n-1$, to make disappear the terms $\Omega _2a_{n-1}^{n-m}$ in the first 
$n-1$ rows; the terms $\Omega _1$ become $(1-n)\Omega _1$. Hence 
$\det S^H=\Omega _3a_{n-1}^{n(n-m)}\not\equiv 0$. 

{\em Case 2:} $1\leq k\leq n-m-1$. 
To prove that $\Delta$ is not divisible by $a_n^{n-m-k+1}$ 
we develop it w.r.t. 
its first column:

$$\Delta := (\pm b_k^n(1-b_0/b_k)^ka_n^{n-m-k})(\det ([Y]_{1,1})+
(-1)^{n+1}n\det ([Y]_{n,1}))~.$$
Our aim is to show that for $a_n=0$ the sum $Z:=\det ([Y]_{1,1})+
(-1)^{n+1}n\det ([Y]_{n,1})$ is nonzero; this implies $a_n^{n-m-k+1}$ not dividing 
$\Delta$. Notice that for $a_n=0$ the only 
nonzero entries in the second column of $Y$ (i.e. of 
$Y|_{a_n=0}=:Y^0$) are $Y^0_{1,2}$ and 
$Y^0_{n,2}=(n-1)Y^0_{1,2}$. Thus 

\begin{equation}\label{Z}
Z|_{a_n=0}=(Y^0_{1,2}+(-1)^{n+1}(-1)^nnY^0_{n,2})\det (Y^{\dagger})=
(1-n(n-1))Y^0_{1,2}(\det Y^{\dagger})~,
\end{equation}
where the matrix $Y^{\dagger}$ is obtained from $Y^0$ 
by deleting its first two columns, its first and its $n$th rows. 

\begin{lm}\label{lmY0}
The entry $Y^0_{1,2}$ is a not identically equal to $0$ polynomial in the 
variables $a_j$, $k\neq j\neq n$.
\end{lm}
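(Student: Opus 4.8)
The plan is to identify $Y^0_{1,2}$ explicitly as the coefficient of a suitable power of $a_k$ in $\tilde D_m$, and then to exhibit within that coefficient a monomial whose variables are all among the $a_j$ with $k\neq j\neq n$. Recall that $Y=S(\tilde D_m,\partial\tilde D_m/\partial a_k,a_k)$, so its first row consists of the coefficients of $\tilde D_m$, written as a polynomial in $a_k$, in order of decreasing degree; by Proposition~\ref{propmonom} the leading coefficient (degree $n$ in $a_k$) is the single monomial $M_k/a_k^n=\pm b_k^n(1-b_0/b_k)^ka_n^{n-m-k}$, which is $Y_{1,1}$. Hence $Y_{1,2}$ is the coefficient of $a_k^{n-1}$ in $\tilde D_m$, a polynomial in the variables $a_j$ with $j\neq k$; and $Y^0_{1,2}$ is obtained from it by setting $a_n=0$. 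So the lemma amounts to showing that the coefficient of $a_k^{n-1}$ in $\tilde D_m|_{a_n=0}$ does not vanish identically.

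First I would compute $\tilde D_m|_{a_n=0}$. Setting $a_n=0$ means $P|_{a_n=0}=xQ$ with $Q:=x^{n-1}+a_1x^{n-2}+\cdots+a_{n-1}$, while $P_*$ is unchanged and has nonzero constant term $b_{n-m}a_{n-m}$ (generically), so $x\nmid P_*$. Then $\mathrm{Res}(P,P_*)|_{a_n=0}=\mathrm{Res}(xQ,P_*)=\mathrm{Res}(x,P_*)\cdot\mathrm{Res}(Q,P_*)=\pm b_{n-m}a_{n-m}\cdot\mathrm{Res}(Q,P_*)$, using multiplicativity of the resultant in its first argument and $\mathrm{Res}(x,P_*)=\pm P_*(0)$. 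Next I would track the degree in $a_k$: in $Q$ the variable $a_k$ appears only as the coefficient of $x^{n-1-k}$, and in $P_*$ only as the coefficient $b_ka_k$ of $x^{n-m-k}$; by the same Sylvester-matrix counting argument used in the proof of Proposition~\ref{propmonom}, $a_k$ occurs in at most $n-1$ columns of $S(Q,P_*)$, so $\mathrm{Res}(Q,P_*)$ has degree at most $n-1$ in $a_k$, and the degree-$(n-1)$ part is the analogue of the monomial $M_k$ for the pair $(Q,P_*)$. Concretely, repeating the row-subtraction trick (subtract $1/b_k$ times the rows coming from $P_*$ from the rows coming from $Q$) isolates a single monomial of top degree $n-1$ in $a_k$, of the shape $\Omega\, a_k^{n-1}(1-b_0/b_k)^{?}a_{n-1}^{?}$ with the exponent of $a_{n-1}$ equal to $n-1-k-$(something)$\geq 0$ since $k\leq n-m-1<n-1$. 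Multiplying by the factor $\pm b_{n-m}a_{n-m}$ from $\mathrm{Res}(x,P_*)$, one gets that the coefficient of $a_k^{n-1}$ in $\tilde D_m|_{a_n=0}$ contains a nonzero monomial in $a_{n-m}$ and $a_{n-1}$ only — in particular neither $a_k$ nor $a_n$ appears — which is exactly what the lemma asserts.

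The main obstacle I anticipate is the bookkeeping in the Sylvester matrix of $(Q,P_*)$: one must check that after the row operations the claimed top-$a_k$-degree monomial really survives with a nonzero coefficient and does not get cancelled against other contributions, i.e. that it is genuinely the \emph{unique} term of degree $n-1$ in $a_k$, just as in Proposition~\ref{propmonom}. This is the place where the genericity hypothesis $0\neq b_i\neq b_j\neq 0$ is needed (to guarantee $1-b_0/b_k\neq 0$ and $b_{n-m}\neq 0$). Everything else — multiplicativity of the resultant, the factorisation $P|_{a_n=0}=xQ$, and the degree count — is routine. An alternative, slightly cleaner, route would be to avoid computing $Y^0_{1,2}$ as an abstract coefficient and instead argue directly that $\tilde D_m|_{a_n=0}$, viewed in $\mathbb C[a^k][a_k]$, has $a_k$-degree exactly $n-1$ (not $n$, because the monomial $M_k$ carries the factor $a_n^{n-m-k}$ with $n-m-k\geq 1$), and that its leading coefficient in $a_k$ is, up to $\Omega$, $b_{n-m}a_{n-m}$ times the leading $a_k$-coefficient of $\mathrm{Res}(Q,P_*)$, which is manifestly a nonzero monomial in the variables $a_j$, $k\neq j\neq n$; this leading coefficient is precisely $Y^0_{1,2}$.
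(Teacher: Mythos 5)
Your proposal is correct and follows essentially the same route as the paper: identify $Y^0_{1,2}$ as the coefficient of $a_k^{n-1}$ in $\mathrm{Res}(P,P_*)|_{a_n=0}$, peel off the factor $b_{n-m}a_{n-m}$, and then use the row-subtraction trick to show the top $a_k$-coefficient of the remaining resultant is the single nonzero monomial $\pm(1-b_0/b_k)^k b_k^{n-1}a_{n-1}^{n-m-k}$ coming from a block-triangular complementary minor. The only (cosmetic) difference is that you extract the factor $b_{n-m}a_{n-m}$ via multiplicativity of the resultant applied to $P|_{a_n=0}=xQ$, whereas the paper expands $\det S(P,P_*)|_{a_n=0}$ along its last column, which contains that single nonzero entry.
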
 

\begin{proof}
Indeed, this is the coefficient of 
$a_k^{n-1}$ in $R^0:=$Res$(P,P_*)|_{a_n=0}$. 
The matrix $S_*:=S(P,P_*)|_{a_n=0}$ has a 
single nonzero entry in its last column; 
this is $(S_*)_{2n-m,2n-m}=b_{n-m}a_{n-m}$. Hence $R^0=b_{n-m}a_{n-m}\det M$, where 
$M:=[S_*]_{2n-m,2n-m}$ ($M$ is $(2n-m-1)\times (2n-m-1)$). 

For $\nu =1,\ldots ,n-m$ one can subtract the $(n-m+\nu)$th row of $M$ 
multiplied by $1/b_k$ from its 
$\nu$th row to make disappear the terms $a_k$ in its first $n-m$ rows. The new 
matrix is denoted by $M^1$; one has $\det M=\det M^1$. 
The only terms of $\det M^1$ containing $a_k^{n-1}$ 
are now 
obtained by multiplying the entries $b_ka_k$ of the last $n-1$ rows of $M^1$. 
To get these terms up to a sign one has to multiply $(b_ka_k)^{n-1}$ by 
$\det M^*$, where $M^*$ is obtained from $M^1$ by deleting the rows and 
columns of the entries $b_ka_k$. The matrix $M^*$ is block-diagonal, its 
left upper block is upper-triangular and its right lower block is 
lower-triangular. The diagonal entries of these blocks (of sizes $k\times k$ 
and $(n-m-k)\times (n-m-k)$) equal $1-b_0/b_k$ and $a_{n-1}$. Hence 
$Y^0_{1,2}=\pm b_{n-m}a_{n-m}(1-b_0/b_k)^kb_k^{n-1}a_{n-1}^{n-m-k}\not\equiv 0$.   
\end{proof}

There remains to prove that $\det Y^{\dagger}\not\equiv 0$, see (\ref{Z}). 
The matrix $Y^{\dagger}$ is obtained as follows. 
Set $D^{\dagger}:=\tilde{D}_m|_{a_n=0}=\det S_*$; 
recall that $\det S_*=b_{n-m}a_{n-m}\det M^1$, see the proof of Lemma~\ref{lmY0}. 
Then $Y^{\dagger}=S(D^{\dagger},\partial D^{\dagger}/\partial a_k,a_k)$. Notice that 
$D^{\dagger}$ is a degree $n-1$, not $n$, polynomial in $a_k$, therefore 
$Y^{\dagger}$ is $(2n-3)\times (2n-3)$. It suffices to show that for 
$a_j=0$, $j\neq k$, $n-m$, $n-1$, one has $\det Y^{\dagger}\not\equiv 0$. This 
results from $\det M^1|_{a_j=0,k\neq j\neq n-1}$ not having multiple roots 
(which we prove below).
 
One can develop $n-m-k$ times $\det M^1$ w.r.t. its last column, 
where it has a single 
nonzero entry $a_{n-1}$, to obtain 
$\det M^1=\pm a_{n-1}^{n-m-k}\det M^{\dagger}$; $M^{\dagger}$ is 
$(n+k-1)\times (n+k-1)$, it is obtained from $M^1$ by 
deleting the last $n-m-k$ columns and the rows with indices 
$k+1$, $\ldots$, $n-m$. The matrix 
$M^{\dagger}$ satisfies the conditions of Lemma~\ref{lm2diag} with 
$p=n+k-1$ and $s=k$, the entries $r_j$ from the lemma equal $1-b_0/b_k\neq 0$ 
(for $j=1$, $\ldots$, $k$) or $b_ka_k$ (for $j=k+1$, $\ldots$, $n+k-1$); 
one has $q_j=a_{n-1}$ ($1\leq j\leq k$) or $q_j=b_0$ ($k+1\leq j\leq n+k-1$). 
Hence $\det M^{\dagger}|_{a_j=0,k\neq j\neq n-1}=
(1-b_0/b_k)^k(b_ka_k)^{n-1}\pm b_0^{n-1}a_{n-1}^k$. For 
$a_{n-1}\neq 0$ it has $n-1$ distinct roots. Part (3) is proved. 
\end{proof} 

\begin{proof}[Proof of part (4):]
We use sometimes the same notation as in the proof 
of part (3), but with different values of the indices, therefore the proofs 
of the two parts of the proposition should be considered as independent ones. 
For $k=n-m+1$, $\ldots$ ,$n$, the polynomial 
$\tilde{D}_m$ contains the monomial 
$N_{k-n+m}:=\pm b_{n-m}^{n-k}a_{n-m}^{n-k}b_0^ka_k^{n-m}$; it   
does not contain any other monomial of 
the form $\Omega a_k^{n-m}D$, where $D$ is a product of powers 
of variables $a_i$ with $i\neq k$, see Proposition~\ref{propmonom}. 
%Indeed, to obtain such a monomial in 
%$\det S(P,P_*)$ one has to multiply all the entries $a_k$ from the first 
%$n-m$ rows of $S(P,P_*)$. The product $\Omega D$ is a product 
%of entries of the matrix $S^{**}$ obtained from $S(P,P_*)$ 
%by deleting its rows and 
%columns containing the entries $a_k$. In the last $n-k$ columns of $S^{**}$ 
%the nonzero entries are on and below the diagonal; the diagonal entries there 
%equal 
%$b_{n-m}a_{n-m}$. Hence $\det S^{**}=b_{n-m}^{n-k}a_{n-m}^{n-k}\det S^{***}$, 
%where $S^{***}$ is obtained from $S^{**}$ by deleting its last $n-k$ 
%rows and columns. Hence $S^{***}$ is $k\times k$, upper-triangular, 
%with diagonal entries equal to $b_0$ which proves the claim. 

The first column of the $(2n-2m-1)\times (2n-2m-1)$-matrix 
$Y:=S(\tilde{D}_m,\partial \tilde{D}_m/\partial a_k,a_k)$ contains only two 
nonzero entries, namely $Y_{1,1}=\pm b_{n-m}^{n-k}a_{n-m}^{n-k}b_0^k$  
and $Y_{n-m,1}=\pm (n-m)b_{n-m}^{n-k}a_{n-m}^{n-k}b_0^k$. Thus 
$\det Y$ is divisible by $a_{n-m}^{n-k}$. We consider two cases:

{\em Case 1:} $k=n$. We show that $\det Y\not\equiv 0$ if $a_{n-m}=0$. We prove this for 
$a_j=0$, $n-m-1\neq j\neq n$. In this case the nonzero entries of the matrix 
$S(P,P_*)$ and their positions are

$$\begin{array}{ccccccl}
1&(j,j)&,&a_{n-m-1}&(j,j+n-m-1)&,&\\ \\ 
a_n&(j,j+n)&,&&&&j=1,\ldots, n-m~,\\ \\ 
b_0&(\nu +n-m,\nu )&,&b_{n-m-1}a_{n-m-1}&(\nu +n-m,\nu +n-m-1)&,&
\nu =1,\ldots ,n~.\end{array}$$
Subtracting the $(j+n-m)$th row multiplied by $1/b_{n-m-1}$ from the $j$th one 
for $j=1$, $\ldots$, $n-m$, one makes disappear the terms $a_{n-m-1}$ in the 
first $n-m$ rows. The only nonzero entry in the last column is now $a_n$ in 
position $(n-m,2n-m)$, so 

$$\det S(P,P_*)=(-1)^na_n\det [S(P,P_*)]_{n-m,2n-m}~.$$
The last matrix satisfies the conditions of Lemma~\ref{lm2diag} with 
$p=2n-m-1$, $s=n$ and one finds that its determinant is of the form 
$\Omega _4a_{n-m-1}^n+\Omega _5a_n^{n-m-1}$. Hence 
$\det S(P,P_*)=(-1)^na_n(\Omega _4a_{n-m-1}^n+\Omega _5a_n^{n-m-1})$. 
This means that the matrix 
$S(\tilde{D}_m,\partial \tilde{D}_m/\partial a_n,a_n)$ has only the 
following entries in the following positions:

$$\begin{array}{ccccll}
\Omega _5&(j,j)&,&\Omega _4a_{n-m-1}^n&(j,j+n-m-1)&,\\ \\ 
&&&&j=1,\ldots ,n-m-1&,\\ \\ 
(n-m)\Omega _5&(\nu +n-m-1,\nu )&,&\Omega _4a_{n-m-1}^n&
(\nu +n-m-1,\nu +n-m-1)&,\\ \\ &&&&\nu =1,\ldots ,n-m~.&\end{array}$$
One can subtract the $(j+n-m-1)$st row from the $j$th one, $j=1$, 
$\ldots$, $n-m-1$, to make disappear the terms $\Omega _4a_{n-m-1}^n$ in the 
first $n-m-1$ rows. The matrix becomes lower-triangular, with diagonal 
entries equal to $(1-n+m)\Omega _5$ or to $\Omega _4a_{n-m-1}^n$, so its 
determinant is not identically equal to $0$.  

{\em Case 2:} $n-m+1\leq k\leq n-1$. To prove that $\det Y$ is not 
divisible by $a_{n-m}^{n-k+1}$ 
we develop it w.r.t. 
its first column:

$$\det Y = (\pm b_{n-m}^{n-k}a_{n-m}^{n-k}b_0^k)(\det ([Y]_{1,1})+
(-1)^{n-m}(n-m)\det ([Y]_{n-m,1}))~.$$
Our aim is to show that for $a_{n-m}=0$ the sum $U:=\det ([Y]_{1,1})+
(-1)^{n-m}(n-m)\det ([Y]_{n-m,1})$ is nonzero; this implies 
$a_{n-m}^{n-k+1}$ not dividing 
$\det Y$. Notice that for $a_{n-m}=0$ the only 
nonzero entries in the second column of 
$Y^0:=Y|_{a_{n-m}=0}$ are $Y^0_{1,2}$ and 
$Y^0_{n-m,2}=(n-m-1)Y^0_{1,2}$. Thus 

\begin{equation}\label{U}
\begin{array}{ccl}
U|_{a_{n-m}=0}&=&(Y^0_{1,2}+(-1)^{n+1}(-1)^{n-m}(n-m)Y^0_{n-m,2})\det Y^{\dagger}\\ \\ 
&=&
(1-(n-m)(n-m-1))Y^0_{1,2}\det Y^{\dagger}~,\end{array}
\end{equation}
where the matrix $Y^{\dagger}$ is obtained from $Y^0$ 
by deleting its first two columns, its first and $(n-m)$th rows. 

\begin{lm}\label{lmY0bis}
The entry $Y^0_{1,2}$ is a not identically equal to $0$ polynomial in the 
variables $a_j$, $k\neq j\neq n-m$.
\end{lm}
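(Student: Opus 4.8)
The plan is to identify the matrix entry $Y^0_{1,2}$ with an explicit coefficient of a resultant and then read that coefficient off directly, much as in the proof of Lemma~\ref{lmY0} but with the decisive simplification that for $n-m+1\leq k\leq n-1$ the variable $a_k$ does not occur in $P_*$ at all. Recall that $\tilde{D}_m$ has degree $n-m$ in $a_k$ for such $k$ (Proposition~\ref{propmonom}), so $Y=S(\tilde{D}_m,\partial\tilde{D}_m/\partial a_k,a_k)$ is $(2n-2m-1)\times(2n-2m-1)$ and its first row lists the coefficients of $\tilde{D}_m$, viewed as a polynomial in $a_k$, in decreasing order of degree; hence $Y^0_{1,2}$ is exactly the coefficient of $a_k^{n-m-1}$ in $R^0:=\tilde{D}_m|_{a_{n-m}=0}={\rm Res}(P,P_*)|_{a_{n-m}=0}$. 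The first step is to factor $R^0$: setting $a_{n-m}=0$ annihilates the constant term of $P_*$, so $P_*|_{a_{n-m}=0}=x\tilde{P}_*$ with $\tilde{P}_*:=b_0x^{n-m-1}+b_1a_1x^{n-m-2}+\cdots+b_{n-m-1}a_{n-m-1}$ a genuine polynomial of degree $n-m-1\geq 1$ (here $m\leq n-2$ is used). Consequently $R^0=\Omega\,a_n\,{\rm Res}(P,\tilde{P}_*)$; this follows either from multiplicativity of the resultant or, staying within the matrix language of the paper, by noting that the last column of $S(P,P_*)|_{a_{n-m}=0}$ has the single nonzero entry $a_n$ in row $n-m$ and expanding the determinant along it.

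The key point is that $a_k$ (with $k\geq n-m+1$) enters ${\rm Res}(P,\tilde{P}_*)$ only through the coefficient of $x^{n-k}$ in $P$, where it occurs linearly, and not through the roots of $\tilde{P}_*$. Writing ${\rm Res}(P,\tilde{P}_*)=\Omega'b_0^{\,n}\prod_{\tilde{P}_*(\beta)=0}P(\beta)$ and extracting from each factor $P(\beta)=a_k\beta^{n-k}+(\mbox{terms free of }a_k)$ its top-degree-in-$a_k$ part, I get that the coefficient of $a_k^{n-m-1}$ in ${\rm Res}(P,\tilde{P}_*)$ equals $\Omega'b_0^{\,n}\bigl(\prod_{\beta}\beta\bigr)^{n-k}$; since $\prod_{\beta}\beta=\Omega''b_{n-m-1}a_{n-m-1}/b_0$, this simplifies to $\Omega'''b_0^{\,k}b_{n-m-1}^{\,n-k}a_{n-m-1}^{\,n-k}$. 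Hence $Y^0_{1,2}=\Omega\,a_n\,b_0^{\,k}b_{n-m-1}^{\,n-k}a_{n-m-1}^{\,n-k}$, and since $2\leq m\leq n-2$ and $n-m+1\leq k\leq n-1$ force both $n$ and $n-m-1$ to be distinct from $k$ and from $n-m$, this is a not identically zero polynomial in the variables $a_j$ with $k\neq j\neq n-m$, which is exactly the assertion.

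The only delicate part is bookkeeping: keeping $Y^0_{1,2}$ attached to the correct power of $a_k$ and to the Sylvester matrix of the right size, tracking which specialization annihilates which entry, and inspecting the extreme values of $k$ and $m$ so that $\tilde{P}_*$ is non-constant and the monomial $a_na_{n-m-1}^{n-k}$ really does involve variables outside $\{a_k,a_{n-m}\}$. If one prefers to avoid the product formula, the same conclusion is available purely within the matrix formalism: from $S(P,P_*)|_{a_{n-m}=0}$ delete the $n-m-1$ rows and columns carrying the entry $a_k$, and recognise the surviving $n\times n$ minor, built from the rows of $\tilde{P}_*$, as block-triangular --- equivalently, as a matrix of the type covered by Lemma~\ref{lm2diag} --- whose determinant is then forced to equal $\Omega\,b_0^{\,k}b_{n-m-1}^{\,n-k}a_{n-m-1}^{\,n-k}$.
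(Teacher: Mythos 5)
Your proposal is correct, and its skeleton coincides with the paper's: you identify $Y^0_{1,2}$ as the coefficient of $a_k^{n-m-1}$ in $R^0=\mathrm{Res}(P,P_*)|_{a_{n-m}=0}$, peel off the factor $a_n$ from the unique nonzero entry of the last column of $S(P,P_*)|_{a_{n-m}=0}$, and then compute the remaining coefficient explicitly. Where you diverge is in that last computation: the paper stays inside the determinant, extracting the minor $M^*$ obtained by deleting the rows and columns of the $n-m-1$ entries $a_k$ and observing that it is block-triangular, whereas your primary route uses multiplicativity of the resultant, $P_*|_{a_{n-m}=0}=x\tilde P_*$, and the product formula $\mathrm{Res}(P,\tilde P_*)=\Omega'b_0^n\prod_{\tilde P_*(\beta)=0}P(\beta)$, reading off the top coefficient in $a_k$ as $(\prod_\beta\beta)^{n-k}$ and converting it to $a_{n-m-1}$ via the elementary symmetric function. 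Both arguments rest on the same structural facts (linearity of $P$ in $a_k$, the constant term of $\tilde P_*$), and you correctly note the matrix version as an alternative, which is exactly the paper's proof. One point in your favour: your explicit answer $Y^0_{1,2}=\Omega\,a_n\,b_0^{\,k}b_{n-m-1}^{\,n-k}a_{n-m-1}^{\,n-k}$ is the correct one; the paper's stated block sizes $(n-m-k)\times(n-m-k)$ and exponent $n-m-k$ are negative in the relevant range $n-m+1\leq k\leq n-1$ and are evidently a slip (carried over from Lemma~\ref{lmY0}) for $(n-k)\times(n-k)$ and $n-k$. This does not affect the lemma's conclusion, since only nonvanishing and the identity of the participating variables matter, and you verify both.
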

\begin{proof} 
Indeed, this is the coefficient of 
$a_k^{n-m-1}$ in $R^0:=$Res$(P,P_*)|_{a_{n-m}=0}$. 
The matrix $S_*:=S(P,P_*)|_{a_{n-m}=0}$ has a 
single nonzero entry in its last column; 
this is $(S_*)_{n-m,2n-m}=a_n$. Hence $R^0=a_n\det M$, where 
$M:=[S_*]_{n-m,2n-m}$ ($M$ is $(2n-m-1)\times (2n-m-1)$). 
 
The only terms of $\det M$ containing $a_k^{n-m-1}$ 
are  
obtained by multiplying the entries $a_k$ of the first $n-m-1$ rows of $M$. 
To obtain these terms up to a sign one has to multiply $a_k^{n-m-1}$ by 
$\det M^*$, where $M^*$ is obtained from $M$ by deleting the rows and 
columns of the entries $a_k$. The matrix $M^*$ is block-diagonal, its 
left upper block is upper-triangular and its right lower block is 
lower-triangular. The diagonal entries of these blocks (of sizes $k\times k$ 
and $(n-m-k)\times (n-m-k)$) equal $b_0$ and $a_{n-m-1}$. Hence 
$Y^0_{1,2}=\pm a_nb_0^ka_{n-m-1}^{n-m-k}\not\equiv 0$.  
\end{proof}

There remains to prove that $\det Y^{\dagger}\not\equiv 0$, see (\ref{U}). 
The matrix $Y^{\dagger}$ is obtained as follows. 
Set $D^{\dagger}:=\tilde{D}_m|_{a_{n-m}=0}=\det S_*$; 
recall that $\det S_*=a_n\det M$ (see the proof of Lemma~\ref{lmY0bis}). 
Then $Y^{\dagger}=S(D^{\dagger},\partial D^{\dagger}/\partial a_k,a_k)$. Notice that 
$D^{\dagger}$ is a degree $n-m-1$, not $n-m$, polynomial in $a_k$, therefore 
$Y^{\dagger}$ is $(2n-2m-3)\times (2n-2m-3)$. It suffices to show that for 
$a_j=0$, $k\neq j\neq n-m-1$, one has $\det Y^{\dagger}\not\equiv 0$. This 
results from $\det M|_{a_j=0,k\neq j\neq n-m-1}$ not having multiple roots 
(which we prove below).

For $a_j=0$, $k\neq j\neq n-m-1$, one can develop $n-k$ times $\det M$
w.r.t. its last column in which there is a single nonzero entry 
$b_{n-m-1}a_{n-m-1}$ 
(on the diagonal). Hence $\det M=(b_{n-m-1}a_{n-m-1})^{n-k}\det M^{\dagger}$, where 
$M^{\dagger}$ is $(n-m+k-1)\times (n-m+k-1)$; it is 
obtained from $M$ by deleting the last $n-k$ rows and columns. 
The matrix $M^{\dagger}$ satisfies the conditions of Lemma~\ref{lm2diag} with 
$p=n-m+k-1$, $s=k$, $r_j=1$, $q_j=a_k$ ($j=1$, $\ldots$, $n-m-1$) or 
$r_j=a_{n-m-1}$, $q_j=b_0$ ($j=n-m$, $\ldots$, $n-m+k-1$). Hence 
$\det M^{\dagger}=a_{n-m-1}^k\pm b_0^ka_k^{n-m-1}$. For $a_{n-m-1}\neq 0$ it has 
$n-m-1$ distinct roots.

\end{proof}

\section{Some properties of the sets $\Theta$ and $\tilde{M}$
\protect\label{seclmst}}

\begin{lm}\label{lmsmooth}
Suppose that all roots of $P_*(.,a^0)$ ($a^0\in \mathcal{A}$) 
are simple and nonzero    
and that $P(.,a^0)$ and $P_*(.,a^0)$ have exactly one 
root in common. Then for any $j=n-m+1$, $\ldots$, $n$, 
in a neighbourhood of $a^0\in \mathcal{A}$ the set  
$\{ \tilde{D}_m=0\}$ is locally the graph of a smooth analytic 
function in the variables $a^j$. If in addition all roots of $P_{m,k}(.,a^0)$ 
are simple and nonzero ($1\leq k\leq n-m$), 
then in a neighbourhood of $a^0\in \mathcal{A}$ the set  
$\{ \tilde{D}_m=0\}$ is locally the graph of a smooth analytic 
function in the variables $a^k$.
\end{lm}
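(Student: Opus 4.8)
The plan is to use the implicit function theorem: to show that $\{\tilde D_m=0\}$ is locally a smooth graph over $\mathcal A^j$ (resp. $\mathcal A^k$) near $a^0$, it suffices to check that $\partial \tilde D_m/\partial a_j$ (resp. $\partial \tilde D_m/\partial a_k$) does not vanish at $a^0$. The natural way to evaluate this partial derivative is via the standard resultant formula $\tilde D_m=\mathrm{Res}(P,P_*)=b_0^{?}\prod_i P(\xi_i)$ where the $\xi_i$ are the roots of $P_*$ (up to the appropriate leading-coefficient power; here $P_*$ has leading coefficient $b_0$, which is nonzero since $b_0=c_0$ or the analog), equivalently $\tilde D_m=\pm\prod_r P_*(x_r)$ over the roots $x_r$ of $P$. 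First I would fix the representation that makes the differentiation transparent, say $\tilde D_m=\Omega\,\prod_{i=1}^{n-m}P(\xi_i)$ with $\xi_i$ the (simple, by hypothesis) roots of $P_*$.

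For the case $n-m+1\le j\le n$: the coefficient $a_j$ occurs in $P$ but not in $P_*$ (recall $P_*$ involves only $a_0=1,a_1,\dots,a_{n-m}$), so the roots $\xi_i$ do not depend on $a_j$, and $\partial\tilde D_m/\partial a_j=\Omega\sum_{i}\big(\partial P/\partial a_j\big)(\xi_i)\prod_{\ell\ne i}P(\xi_\ell) = \Omega\sum_i \xi_i^{\,n-j}\prod_{\ell\ne i}P(\xi_\ell)$. By hypothesis $P$ and $P_*$ have exactly one common root, say $\xi_1$ is that root, so $P(\xi_1)=0$ while $P(\xi_i)\ne 0$ for $i\ge 2$; hence every term of the sum with $i\ge 2$ contains the factor $P(\xi_1)=0$ and vanishes, leaving $\partial\tilde D_m/\partial a_j(a^0)=\Omega\,\xi_1^{\,n-j}\prod_{\ell\ge 2}P(\xi_\ell)$. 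Since all $\xi_i$ are nonzero and distinct and $P(\xi_\ell)\ne 0$ for $\ell\ge 2$, this product is nonzero, and the implicit function theorem gives the local graph over $\mathcal A^j$.

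For the case $1\le k\le n-m$ one cannot argue directly because $a_k$ sits in both $P$ and $P_*$; this is the step I expect to be the main obstacle. The remedy is to replace the pair $(P,P_*)$ by the pair $(P_{m,k},P_*)$, where $P_{m,k}=b_kP-x^mP_*$ is designed so that $a_k$ does \emph{not} appear in $P_{m,k}$ (the $x^k$-coefficient of $P_{m,k}$ is $0$ by the Notation above). One checks that $\mathrm{Res}(P,P_*)$ and $\mathrm{Res}(P_{m,k},P_*)$ differ only by a nonzero power of the leading coefficient of $P_*$ (times $b_k$ to a power), because on the common zero set of $P_*$ one has $P_{m,k}(\xi_i)=b_kP(\xi_i)$; so $\tilde D_m=\Omega'\prod_i P_{m,k}(\xi_i)$ near $a^0$, with the $\xi_i$ again independent of $a_k$. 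Now differentiate: $\partial\tilde D_m/\partial a_k=\Omega'\sum_i\big(\partial P_{m,k}/\partial a_k\big)(\xi_i)\prod_{\ell\ne i}P_{m,k}(\xi_\ell)$. Noting that the common root $\xi_1$ of $P$ and $P_*$ is also a root of $P_{m,k}$, and that all other $\xi_\ell$ are roots of $P_*$ but, since they are simple roots of $P_*$ and not roots of $P$, are not roots of $P_{m,k}$ either (as $P_{m,k}(\xi_\ell)=b_kP(\xi_\ell)\ne0$), the same collapse occurs: only the $i=1$ term survives, giving $\partial\tilde D_m/\partial a_k(a^0)=\Omega'\big(\partial P_{m,k}/\partial a_k\big)(\xi_1)\prod_{\ell\ge 2}b_kP(\xi_\ell)$. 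Finally the extra hypothesis that all roots of $P_{m,k}(.,a^0)$ are simple and nonzero is exactly what guarantees $\big(\partial P_{m,k}/\partial a_k\big)(\xi_1)\ne 0$: since $a_k$ enters $P_{m,k}$ only through the linear coefficient $b_k a_k x^{n-k}$ (the term $-c_k a_k x^{n-k}$ of $x^mP_*$ together with $b_k a_k x^{n-k}$ of $b_kP$, appropriately), $\partial P_{m,k}/\partial a_k$ is (up to a nonzero constant) the monomial $x^{n-k}$, which at $\xi_1\ne0$ is nonzero — and one must also rule out that $\xi_1$ is a multiple root of $P_{m,k}$, which is excluded by the simplicity hypothesis on $P_{m,k}$, ensuring the derivative-of-resultant expression is genuinely $\Omega'\cdot(\text{nonzero})$ rather than an indeterminate form. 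Hence $\partial\tilde D_m/\partial a_k(a^0)\ne 0$ and the implicit function theorem again yields the local smooth graph over $\mathcal A^k$.
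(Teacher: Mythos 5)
Your treatment of the case $n-m+1\le j\le n$ is correct and is essentially the paper's argument in implicit-function-theorem form: the paper solves $P(\lambda ,a)=0$ directly for $a_j$ (the coefficient of $a_j$ in $P(\lambda ,a)$ being $\lambda ^{n-j}\neq 0$), while you differentiate $\mathrm{Res}(P,P_*)=\pm b_0^n\prod _iP(\xi _i)$; both hinge on the same two facts, namely that the roots of $P_*$ do not involve $a_j$ and that the common root is nonzero.

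The case $1\le k\le n-m$ contains a genuine gap, indeed an internal contradiction. You correctly observe that $P_{m,k}=b_kP-x^mP_*=\sum _j(b_k-b_j)a_jx^{n-j}$ contains no $a_k$ at all; but then $\partial P_{m,k}/\partial a_k\equiv 0$, so your displayed formula $\partial \tilde{D}_m/\partial a_k=\Omega '\sum _i(\partial P_{m,k}/\partial a_k)(\xi _i)\prod _{\ell \neq i}P_{m,k}(\xi _\ell )$ evaluates to zero, and the later claim that $\partial P_{m,k}/\partial a_k$ is a nonzero multiple of $x^{n-k}$ contradicts your own construction. Moreover the $\xi _i$ in that formula are roots of $P_*$, and $P_*$ contains the term $b_ka_kx^{n-m-k}$ for $1\le k\le n-m$, so these roots are \emph{not} independent of $a_k$; the differentiation is illegitimate on both counts. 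The repair is to expand the resultant over the roots of the factor that is free of $a_k$: write $\mathrm{Res}(P_{m,k},P_*)=\pm (b_k-b_0)^{n-m}\prod _rP_*(\eta _r)$ over the roots $\eta _r$ of $P_{m,k}$, which by the simplicity hypothesis are analytic and independent of $a_k$; then $\partial P_*/\partial a_k=b_kx^{n-m-k}$, the only common root of $P_{m,k}$ and $P_*$ is the common root $\mu$ of $P$ and $P_*$, and the single surviving term $b_k\mu ^{n-m-k}\prod _{\eta _r\neq \mu}P_*(\eta _r)$ is nonzero because $\mu \neq 0$ and $\mu$ occurs only once among the $\eta _r$ (this is where simplicity of the roots of $P_{m,k}$ is genuinely used). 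The paper avoids the computation altogether: it notes that $\mu$, being a simple root of $P_{m,k}$, is an analytic function of $a^k$, and then solves $P(\mu ,a)=0$ for $a_k$, the coefficient of $a_k$ being $\mu ^{n-k}\neq 0$.
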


\begin{proof}
Denote by $[a]_{n-m}$ 
%(resp. by $[a]_{n-m}^k$, $j\leq n-m$) 
the first $n-m$ coordinates of $a\in \mathcal{A}$. 
%(resp. the first $n-m$ coordinates excluding $a_k$). 
Any simple root of $P_*$ is locally (in a 
neighbourhood of $[a^0]_{n-m}$) the value of a smooth analytic function 
$\lambda$ in the variables $[a]_{n-m}$. As $\lambda ([a^0]_{n-m})\neq 0$, 
the condition $P(\lambda ,a)/\lambda ^j=0$, $j<m$, 
allows to express $a_{n-j}$ locally (for $a_i$ close to $a_i^0$, $i\neq j$) 
as a smooth analytic function 
in the variables $a^{n-j}$. Suppose that all roots of $P_{m,k}(.,a^0)$ 
are simple and nonzero. Then any of these roots is a smooth analytic function 
in the variables $a^k$. This refers also to $\mu$, the root in common of 
$P$ and $P_*$ which is also a root of $P_{m,k}$. Hence one can express 
$a_k$ as a function in $a^k$ from the condition $P(\mu ,a)/\mu ^{n-k}=0$.
\end{proof}

\begin{st}\label{stMaxwell}
At a point of the Maxwell stratum the hypersurface $\{ \tilde{D}_m=0\}$ is 
locally the transversal intersection of two smooth analytic hypersurfaces along 
a smooth analytic subvariety of codimension $2$.
\end{st}

\begin{proof}
Suppose first that the roots in common of $P$ and $P_*$ are $0$ and $1$. 
The two conditions $P_*(0)=P_*(1)=0$ define a codimension $2$ 
linear subspace $\mathcal{S}$ in the space $\mathcal{A}$ 
of the variables $a$. Adding to them the two conditions 
$P(0)=P(1)=0$ means defining a codimension $2$ linear subspace 
$\mathcal{T}\subset \mathcal{S}$; hence $\mathcal{T}$ is a 
codimension $4$ linear subspace of $\mathcal{A}$. The two linear subspaces 
$\{ P(0)=0\}$ and $\{ P(1)=0\}$ and their intersections with 
$\{ P_*(0)=P_*(1)=0\}$ intersect transversally (along respectively 
$\{ P(0)=P(1)=0\}$ and $\mathcal{T}$). 

By means of a linear change $\tau ~:~x\mapsto \alpha x+\beta$, 
$\alpha \in \mathbb{C}^*$, $\beta \in \mathbb{C}$, one can transform any 
pair of distinct complex numbers into the pair $(0,1)$. Hence at a point of 
$\mathcal{T}$ the Maxwell stratum is locally the direct product of 
$\mathcal{T}$ and the two-dimensional orbit of the group of 
linear diffeomorphisms  
induced in the space $\mathcal{A}$ by the group of linear changes $\tau$. This 
proves the statement. 
\end{proof}

\begin{st}\label{stTheta}
(1) At a point of the set $\Theta$ (see Definition~\ref{ThetaM}) the set 
$\{ \tilde{D}_m=0\}$ is not representable as the graph in the space 
$\mathcal{A}$ of a smooth 
analytic function in the variables $a^j$, for any $j=n-m+1$, $\ldots$, $n$.

(2) At a point of the set $\Theta$ this set is a smooth analytic 
variety of dimension $n-2$ in the space of variables $a$.
\end{st}

\begin{proof}[Proof of part (1):] 
Suppose that for some $a=a_0\in \mathcal{A}$ one has 
$P_*(x_0,a_0)=P_*'(x_0,a_0)=0$. Suppose first that $x_0\neq 0$.  
Consider the equation 

\begin{equation}\label{vareps}
P_*(x,a_0)=\varepsilon ~,~~\, \, {\rm where}~~\, \,  
\varepsilon \in (\mathbb{C},0)~.
\end{equation} 
Its left-hand side equals 
$P_*''(x_0,a_0)(x-x_0)^2/2+o((x-x_0)^2)$ (with $P_*''(a_0,x_0)\neq 0$). 
Thus locally (for $x$ close to 
$x_0$) one has 

\begin{equation}\label{vareps1}
x-x_0=(2/P_*''(x_0,a_0))^{1/2}\varepsilon ^{1/2}+
o(\varepsilon ^{1/2})~.
\end{equation} 
In a neighbourhood of $a_0\in \mathcal{A}$ one can introduce new coordinates 
two of which are $x_0$ and $\varepsilon$. Indeed, one can write
%$x_0$ is a simple nonzero 
%root of 
%$P_*'$, so $x_0$ and $n-m-2$ of the coefficients of $P_*'$ 
%can be chosen 
%as coordinates in the space of the variables $(a_1$, $\ldots$, $a_{n-m-1})$. 
%To see this write 

$$\begin{array}{ccl}
(n-m-1)!P_*'/n!&=&
(x-x_0)(x^{n-m-2}+g_1x^{n-m-3}+\cdots +g_{n-m-2})\\ \\ 
&=&x^{n-m-1}+b_1^*a_1x^{n-m-2}+
\cdots +b_{n-m-1}^*a_{n-m-1}~,\end{array}$$
where $b_j^*=(n-j)!(n-m-1)!/(n-m-j-1)!n!$. 
Hence 

$$\begin{array}{rclcrclc}
b_1^*a_1&=&g_1-x_0&~~,~~&b_2^*a_2&=&g_2-x_0g_1&~,~\ldots ~,~\\ \\  
b_{n-m-2}^*a_{n-m-2}&=&g_{n-m-2}-x_0g_{n-m-3}&~~,~~&
b_{n-m-1}^*a_{n-m-1}&=&-x_0g_{n-m-2}&~~.~~\end{array}$$ 
The Jacobian 
matrix $\partial (a_1$, $\ldots$, $a_{n-m-1})/$ $\partial (x_0$, $g_1$, $\ldots$, 
$g_{n-m-2})$ is, up to multiplication of the columns by nonzero constants 
followed by transposition, the Sylvester matrix of the polynomials 
$x-x_0$ and $x^{n-m-2}+g_1x^{n-m-3}+\cdots +g_{n-m-2}$. Its determinant is nonzero 
because $x_0$ is not a root of the second of these polynomials. 

Thus in the space of the variables $(a_1$, $\ldots$, $a_{n-m-1})$ one can choose 
as coordinates $(x_0$, $g_1$, $\ldots$, $g_{n-m-2})$. The polynomial $P_*$ is 
a primitive of $P_*'$ and $(-\varepsilon )$ can be considered as the 
constant of integration, see (\ref{vareps}), therefore 
$(x_0$, $g_1$, $\ldots$, $g_{n-m-2}$, $\varepsilon )$ can be chosen as 
coordinates in the space of the variables $(a_1$, $\ldots$, $a_{n-m})$. Adding 
to them $(a_{n-m+1}$, $\ldots$, $a_n)$, one obtains local coordinates 
in the space $\mathcal{A}$. 

Hence the double root $\mu$ of $P_*$ is not an analytic, 
but a multivalued function of the local coordinates in $\mathcal{A}$, see 
(\ref{vareps1}). Consider the condition $P(\mu ,a)/\mu ^{n-j}=0$. One can 
express from it $a_j$ ($n-m+1\leq j\leq n$) 
as a linear combination of the variables $a^j$ with coefficients depending 
on $\mu$. This expression is of the form $A+\varepsilon ^{1/2}B$, where $A$ 
and $B$ ($B\not\equiv 0$) 
depend analytically on the local coordinates in $\mathcal{A}$. 
This proves the statement for $x_0\neq 0$. For $x_0=0$ the 
statement also holds true -- if for $x_0=0$ the set $\{ \tilde{D}_m=0\}$ 
is locally the graph of a holomorphic function in the variables $a^j$, then 
this must be the case for nearby values of $x_0$ as well which is false.  
Such values exist -- the change $x\mapsto x+\delta$, $\delta \in \mathbb{C}$, 
shifts simultaneously by $-\delta$ all roots of $P$ 
(hence of all its nonconstant derivatives as well).
\end{proof}

\begin{proof}[Proof of part (2):] 
Denote by $\xi$ the root of $P_*'$ which is 
also a root of $P_*$ and of $P$. Then $\xi$ is a smooth analytic function 
in the variables $a^{\dagger}:=(a_1$, $\ldots$, $a_{n-m-1})$. The condition 
$P_*(\xi ,a)=0$ allows to express $a_{n-m}$ as a smooth analytic function 
$\alpha$ in the variables $a^{\dagger}$. Set $a^*:=a|_{a_{n-m}=\alpha (a^{\dagger})}$. 
One can express $a_n$ as a 
smooth analytic function in the variables $a_j$, $n-m\neq j\neq n$, from the 
condition $P(\xi ,a^*)=0$. Thus locally $\Theta$ is the graph of a smooth 
analytic vector-function in the variables $a_j$, $n-m\neq j\neq n$, 
with two components.  
\end{proof}

\begin{st}\label{stirred}
For $2\leq m\leq n-2$ the polynomials $B_{m,k}$ and $C_{m,k}$ 
defined in Theorem~\ref{maintm} are irreducible.
\end{st}

\begin{proof}
Irreducibility of the factor $B_{m,k}$ is proved by analogy with 
Proposition~\ref{propmonom}. (For $n-m+1\leq k\leq n$ the analogy is complete 
because after the dilatations $a_j\mapsto a_j/b_j$, $j=1$, $\ldots$, $n-m$, 
the polynomial $P_*$ becomes $b_0P^*$, where $P^*$ is the polynomial 
$P$ defined for $n-m$ instead of $n$. For $1\leq k\leq n-m$ the coefficients 
of the polynomial $P_{m,k}$ are not $a_j$ (we set $a_0=1$), 
but $(b_k-b_j)a_j$, and one can perform similar dilatations. 
Only the variable $a_k$ is absent; this, however, is not an obstacle 
to the proof of irreducibility. The details are left for the reader.)

Irreducibility of the factors $C_{m,k}$ can be proved like this. Denote by 
$\xi$ and $\eta$ two of the roots of $P_*$. They are multivalued functions 
of the coefficients $a_1$, $\ldots$, $a_{n-m}$. The system of two equations 
$P(\xi ,a)=P(\eta ,a)=0$ allows to express for $\xi \neq \eta$ the 
coefficients $a_n$ and $a_{n-1}$ as functions of $a_1$, $\ldots$, $a_{n-2}$. 
These multivalued functions are defined over a Zariski dense open 
subset of the space of variables $(a_1$, $\ldots$, $a_{n-2})$ from 
which irreducibility of the set $\tilde{M}$ follows. Hence its projections 
in the hyperplanes $\mathcal{A}^k$ are also irreducible. 
\end{proof}

\begin{rem}\label{remm=1}
{\rm In the case $m=1$ one cannot prove in the same way as above that the 
polynomials $C_{1,k}$ are irreducible because the coefficient $a_{n-1}$ is 
in fact $a_{n-m}$.}
\end{rem}

\begin{rem}\label{remintermediate}
{\rm Proposition~\ref{Amkprop}, Lemma~\ref{lmsmooth}, 
Statements~\ref{stMaxwell}, 
\ref{stTheta} and \ref{stirred} 
allow to conclude that $\tilde{D}_{m,k}$ is of the form 
$A_{m,k}B_{m,k}^{s_{m,k}}C_{m,k}^{r_{m,k}}$, 
where $s_{m,k}$, $r_{m,k} \in \mathbb{N}$. Indeed, the form of the factor 
$A_{m,k}$ is justified by Proposition~\ref{Amkprop}. 
It follows from Lemma~\ref{lmsmooth} and its proof 
that for $A_{m,k}B_{m,k}C_{m,k}\neq 0$ the polynomials $\tilde{D}_m$ and 
$\partial \tilde{D}_m/\partial a_k$, when considered as polynomials in $a_k$, 
have no root in common. Hence a priori $\tilde{D}_{m,k}$ is of the form 
$A_{m,k}B_{m,k}^{s_{m,k}}C_{m,k}^{r_{m,k}}$, with 
$s_{m,k}$, $r_{m,k} \in \mathbb{N}\cup 0$ (implicitly we use the irreducibility 
of $B_{m,k}$ and $C_{m,k}$ here). 
Statements~\ref{stMaxwell} and \ref{stTheta} 
imply that one cannot have $s_{m,k}=0$ or $r_{m,k}=0$. To prove formula 
(\ref{formula}) now means to prove that $s_{m,k}=1$, $r_{m,k}=2$. This is 
performed in the next sections.}
\end{rem}

\section{The case $m=n-2$\protect\label{secmn-2}}

\begin{prop}\label{propmn-2}
For $m=n-2$, $n\geq 4$, one has $s_{m,k}=1$ and $r_{m,k}=2$.
\end{prop}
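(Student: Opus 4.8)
The plan is to prove Proposition~\ref{propmn-2} by making everything explicit in the case $m=n-2$, where $P_*$ is a quadratic polynomial $b_0x^2+b_1a_1x+b_2a_2$. In this situation the two roots of $P_*$ are the honest algebraic functions $\xi_\pm = (-b_1a_1\pm\sqrt{b_1^2a_1^2-4b_0b_2a_2})/(2b_0)$, so $\tilde D_{n-2}=\mathrm{Res}(P,P_*)=b_0^nP(\xi_+)P(\xi_-)$, and this expression can be rationalized: writing $\sigma=\xi_++\xi_-=-b_1a_1/b_0$ and $\pi=\xi_+\xi_-=b_2a_2/b_0$, the product $P(\xi_+)P(\xi_-)$ is a symmetric function of $\xi_\pm$, hence a polynomial in $\sigma,\pi$ and in $a_1,\dots,a_n$. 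The key point to extract is the structure of $\tilde D_{n-2}$ as a polynomial in the single chosen variable $a_k$: I expect it to factor, up to the monomial $A_{m,k}$ already isolated in Proposition~\ref{Amkprop}, in a way that lets one read off the multiplicities $s_{m,k}$ and $r_{m,k}$ directly.

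First I would treat the range $n-1\le k\le n$ separately. By Proposition~\ref{Amkprop}(4) and part (2) of Theorem~\ref{maintm}, $B_{m,k}=\mathrm{Res}(P_*,P_*')$, which here is (up to a constant) the discriminant $b_1^2a_1^2-4b_0b_2a_2$ of the quadratic $P_*$ — a polynomial linear in $a_2$ and of degree two in $a_1$, and the set $\{B_{m,k}=0\}$ is exactly $\overline\Theta$ projected. For $1\le k\le n-2$, likewise $B_{m,k}=\mathrm{Res}(P_{m,k},P_{m,k}')$ where $P_{m,k}=b_kP-x^{n-2}P_*$ is now a polynomial of degree $n$ missing its $x^k$-term. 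In either case, having the explicit $\tilde D_{n-2}$, I would compute $\tilde D_{m,k}=\mathrm{Res}(\tilde D_{n-2},\partial\tilde D_{n-2}/\partial a_k,a_k)$ and match it against $A_{m,k}B_{m,k}^{s}C_{m,k}^{r}$. Remark~\ref{remintermediate} already guarantees $s,r\ge1$, so the task reduces to two bounds: showing $s_{m,k}\le1$ and $r_{m,k}\le2$. For the first, the generic vanishing of $B_{m,k}$ corresponds to a \emph{simple} tangency ($\Theta$: $P$ passes through a double root of $P_*$), which contributes a single factor to a discriminant-type resultant; I would make this precise by localizing near a generic point of $\Theta$, using the local normal form from the proof of Statement~\ref{stTheta}, where $a_k$ depends on the local coordinates as $A+\varepsilon^{1/2}B$ with $\varepsilon$ the defining function of $\{B_{m,k}=0\}$ — so $\tilde D_{n-2}$, viewed in $a_k$, has a single pair of branches colliding to order one in $\varepsilon$, forcing $s_{m,k}=1$.

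For $r_{m,k}\le2$: near a generic point of the Maxwell stratum $\tilde M$, Statement~\ref{stMaxwell} gives that $\{\tilde D_{n-2}=0\}$ is a transversal crossing of two smooth sheets, and Remark~\ref{remcompare}(3) explains that the projection picks up a monodromy of rank exactly $2$ — two liftings exchanged along a loop — so the local contribution of $\tilde M$ to the $a_k$-resultant is that of a node, i.e. the factor $C_{m,k}$ appears squared and no more. Again I would turn this into an estimate by choosing coordinates adapted to the crossing and computing the order of vanishing of $\mathrm{Res}(\tilde D,\partial_{a_k}\tilde D,a_k)$ along $\{C_{m,k}=0\}$ directly from the two local branches $a_k=\phi_j(a^k)$, $j=1,2$: transversality of the sheets means $\phi_1-\phi_2$ vanishes to order one along the crossing locus, and the resultant of the product $(a_k-\phi_1)(a_k-\phi_2)\cdot(\text{unit})$ with its $a_k$-derivative contributes $(\phi_1-\phi_2)^2$, giving $r_{m,k}=2$. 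The main obstacle I anticipate is the bookkeeping when $1\le k\le n-2$: here $a_k$ also appears inside $P_*$ only for $k\in\{1,2\}$, so the cases $k=1,2$, $3\le k\le n-2$, and $n-1\le k\le n$ each need slightly different explicit handling of how $\xi_\pm$ and the substituted value $P(\xi_\pm)$ depend on $a_k$; isolating the leading $a_k$-behaviour cleanly enough to separate the $B$, $C$ and $A$ contributions (and to be sure no extra factor is hiding) is the delicate part, and I would lean on the degree count from Proposition~\ref{propmonom} — $\deg_{a_k}\tilde D_{n-2}=n$ for $k\le n-2$ and $=n-2$ for $k\ge n-1$ — together with the known degrees of $A_{m,k}$, $B_{m,k}$, $C_{m,k}$ as a consistency check that pins down $s_{m,k}=1$, $r_{m,k}=2$ uniquely.
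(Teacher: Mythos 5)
Your strategy — localize at generic points of the branching strata and read off the exponents from the local orders of collision of the roots of $\tilde{D}_{n-2}$ in $a_k$ — is genuinely different from the paper's, which instead identifies $C_{n-2,k}$ explicitly as the linear form $(\beta^{n-k}P(\alpha,a)-\alpha^{n-k}P(\beta,a))/(\alpha-\beta)$, exhibits a monomial quadratic in $a_n$ (resp.\ $a_{n-1}$) inside $\tilde{D}_{n-2,k}$ to force $r=2$, and then pins down $s=1$ by quasi-homogeneous weight counting (plus, for $k=1$, a separate restriction argument). For $3\le k\le n$ your route is essentially sound and close in spirit, because there $\deg_{a_k}\tilde{D}_{n-2}=2$: only one pair of roots exists, so the local picture is the whole picture. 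But as written the proposal has concrete problems in the remaining cases.

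First, the dichotomy in Theorem~\ref{maintm} is at $k=n-m$ versus $k=n-m+1$; for $m=n-2$ this is $k\le 2$ versus $k\ge 3$, not $k\le n-2$ versus $k\ge n-1$ as you split it. Second, and more seriously, for $k\le n-m$ the hypersurface $\{B_{m,k}=0\}=\{\mathrm{Res}(P_{m,k},P_{m,k}')=0\}$ is \emph{not} the projection of $\overline{\Theta}$: at a generic point of $\Theta$ the common root $\mu$ satisfies $P_{m,k}'(\mu)=b_kP'(\mu)\ne 0$, so $P_{m,k}$ has no double root there and, by Lemma~\ref{lmsmooth}, the projection to $\mathcal{A}^k$ is unramified over $\pi(\Theta)$ for $k\le n-m$. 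The relevant stratum is the double-root locus of $P_{m,k}$, for which the normal form of Statement~\ref{stTheta} (written for double roots of $P_*$) does not directly apply; you would be localizing at the wrong set for $k=1,2$. Third, even granting the local normal forms, the order of vanishing of $\mathrm{Res}(\tilde{D}_{n-2},\partial_{a_k}\tilde{D}_{n-2},a_k)$ along $\{B=0\}$ or $\{C=0\}$ is the \emph{sum} over all colliding pairs of roots; for $k=1,2$ there are $n$ roots in $a_k$, so you must also prove that exactly one pair collides over a generic point of each hypersurface (generic injectivity of the projections of $\tilde{M}$ and of the relevant double-root stratum onto their images), which the proposal does not address. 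Finally, the degree/weight "consistency check" you plan to fall back on does not determine the exponents uniquely when $k=1$: the paper's Lemma~\ref{lmQHW} gives $\mathrm{QHW}(B_{n-2,1})=\mathrm{QHW}(C_{n-2,1})=n(n-1)$, leaving $(s,r)=(1,2)$ and $(2,1)$ both numerically possible, and an extra argument (Lemma~\ref{lmelim}, restricting to $a_j=0$ for $j\ne 1,n-1,n$ and counting monomials) is needed to exclude the second. So the case $k=1$ — the hardest one in the paper — is exactly where your plan, as stated, would not close.
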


\begin{proof}[Proof for $3\leq k\leq n$.]
For $3\leq k\leq n$ the polynomial $\tilde{D}_{n-2}$ is a degree $2$ 
polynomial in $a_k$, see Proposition~\ref{propmonom}, so one can set 
$\tilde{D}_{n-2}:=Ua_k^2+Va_k+W$ and 
$\partial \tilde{D}_{n-2}/\partial a_k=2Ua_k+V$, 
where $U$, $V$, $W\in \mathbb{C}[a^k]$, $U\not\equiv 0$. Hence 
$S(\tilde{D}_{n-2}$,$\partial \tilde{D}_{n-2}/\partial a_k,a_k)=
\left( \begin{array}{ccc}U&V&W\\ 2U&V&0\\ 0&2U&V\end{array}\right)$ and 
$\tilde{D}_{n-2,k}=U(4UW-V^2)$. The second factor is up to a sign the 
discriminant of the quadratic polynomial (in the variable $a_k$) 
$Ua_k^2+Va_k+W$. Up to a sign, $U$ is 
the determinant of the matrix $S^L$ obtained from $S(P,P_*)$ by deleting its 
first two rows and the columns, where its entries $a_k$ are situated. 
Hence $U=\omega a_2^{n-k}$, $\omega \in \mathbb{C}^*$. Indeed, $S^L$ is 
block-diagonal, with diagonal blocks of sizes $k\times k$ (upper left) and 
$(n-k)\times (n-k)$ (lower right). They are respectively upper- and 
lower-triangular, with diagonal entries equal to $b_0$ and $b_2a_2$.

For $a_2=0$ the factor $4UW-V^2$ reduces to $-V^2\in \mathbb{C}[a]$. From the 
following lemma we deduce (after its proof) that the factor $C_{n-2,k}$ 
must be squared. 

\begin{lm}\label{lmsquared}
The polynomial $-V^2$ is a 
quadratic polynomial in the variables $a_i$, $i=3$, $\ldots$, $n$, with the square 
of at least one of them present in $-V^2$. For $k<n$ (resp. $k=n$) 
it contains the monomial $a_n^2(b_0)^{2k}(b_1a_1)^{2(n-k)}$ 
(resp. $a_{n-1}^2(b_0)^{2(n-1)}(b_1a_1)^2$).
\end{lm} 

\begin{proof}
Indeed, if $k<n$, then 
set $V^*:=V|_{a_j=0,j\neq 1, k, n}$ and $S^*:=S(P,P_*)|_{a_j=0,j\neq 1, k, n}$. 
There are two entries $a_k$ (resp. $a_1$ and $a_n$) 
in $S^*$, in positions $(1,k+1)$ and 
$(2,k+2)$ (resp. $(1,2)$, $(2,3)$, and $(1,n+1)$, $(2,n+2)$). The other 
nonzero entries of $S^*$ are $b_0$ (resp. $b_1a_1$) 
in positions $(\nu +2,\nu )$ (resp. $(\nu +2,\nu +1)$), $\nu =1$, $\ldots$, 
$n$. Thus 

$$V^*=\det V^{**}+\det V^{***}~~,~~{\rm where}~~V^{**}=[S^*]_{1,k+1}|_{a_k=0}~~,~~
V^{***}=[S^*]_{2,k+2}|_{a_k=0}~.$$
The matrices $V^{**}$ and $V^{***}$ are $(n+1)\times (n+1)$. Hence 

$$\det V^{**}=(-1)^na_n\det [V^{**}]_{1,n+1}~~,~~\det V^{***}=0$$
(because all entries in the last column of $V^{***}$ equal $0$). The matrix 
$[V^{**}]_{1,n+1}$ is block-diagonal, with diagonal blocks of sizes 
$k\times k$ (left upper, it is upper-triangular) and $(n-k)\times (n-k)$ 
(right lower, it is lower-triangular). Their 
diagonal entries equal respectively $b_0$ and $b_1a_1$. Thus 

$$V^*=\det V^{**}=(-1)^na_n(b_0)^k(b_1a_1)^{n-k}~.$$
Hence for $k<n$ the term $-V^2$ contains the monomial 
$a_n^2(b_0)^{2k}(b_1a_1)^{2(n-k)}$. 

For $k=n$ we set $a_j=0$, $j\neq 1$, $n-1$, $n$,  
$S^{\dagger}:=S(P,P_*)|_{a_j=0,j\neq 1, n-1, n}$ and  
$V^{\dagger}:=V|_{a_j=0,j\neq 1, n-1, n}$. Hence 

$$V^{\dagger}=\det V^{\dagger \dagger}+\det V^{\dagger \dagger \dagger }~~,~~{\rm where}~~
V^{\dagger \dagger }=[S^{\dagger }]_{1,n+1}|_{a_n=0}~~,~~
V^{\dagger \dagger \dagger }=[S^{\dagger }]_{2,n+2}|_{a_n=0}~.$$
One has $\det V^{\dagger \dagger }=0$ (all entries in the last column are $0$) and 
$V^{\dagger \dagger \dagger }$ has an entry $a_{n-1}$ in position $(1,n)$; no other 
entry of $V^{\dagger \dagger \dagger }$ depends on $a_{n-1}$. Hence 
$\det V^{\dagger \dagger \dagger }$ contains the monomial 
$(-1)^{n+1}a_{n-1}\det [V^{\dagger \dagger \dagger }]_{1,n}$. The matrix 
$[V^{\dagger \dagger \dagger }]_{1,n}$ is block-diagonal, with diagonal blocks of 
sizes $(n-1)\times (n-1)$ (upper left, it is upper-triangular, with 
diagonal entries equal to $b_0$) and 
$1\times 1$ (lower right, it equals $b_1a_1$). Hence $-V^2$ contains the 
monomial $a_{n-1}^2(b_0)^{2(n-1)}(b_1a_1)^2$.
\end{proof}

The factor $C_{n-2,k}$ is a linear function in the variables 
$a_3$, $\ldots$, $a_n$, with coefficients depending on $a_1$ and $a_2$. Indeed, 
set $P_*:=b_0(x-\alpha )(x-\beta )$, $0\neq \alpha \neq \beta \neq 0$. 
One can choose $(\alpha ,\beta )$ as 
coordinates in the space $(a_1,a_2)$. The polynomial $P$ is obtained from 
$P_*$ by rescaling of its coefficients followed by $(n-2)$-fold integration 
with constants of integration 
of the form $\eta _sa_s$, $\eta _s\in \mathbb{Q}^*$, $s=3$, $\ldots$, $n$. 
Consider the two conditions $P(\alpha ,a)/\alpha ^{n-k}=0$ and 
$P(\beta ,a)/\beta ^{n-k}=0$. Each of them is a linear form in the variables 
$a_3$, $\ldots$, $a_n$, with coefficients depending on $a_1$ and $a_2$; the 
one of $a_k$ equals $1$. 
The projection of the Maxwell stratum in the space of the variables $a^k$ is 
given by the condition 

\begin{equation}\label{eqAB}
\beta ^{n-k}P(\alpha ,a)-\alpha ^{n-k}P(\beta ,a)=0~.
\end{equation}  
Its left-hand side is a linear form in the variables $a_3$, $\ldots$, $a_{k-1}$, 
$a_{k+1}$, $\ldots$, $a_n$, with coefficients depending on $\alpha$ and $\beta$. 
The presence of the monomial $a_n^2(b_0)^{2k}(b_1a_1)^{2(n-k)}$ or 
$a_{n-1}^2(b_0)^{2(n-1)}(b_1a_1)^2$ in $\tilde{D}_{n-2,k}$ (see Lemma~\ref{lmsquared}) 
implies that the factor $C_{n-2,k}$ must be squared. 

There remains to prove that $s_{n-2,k}=1$, see Remark~\ref{remintermediate}. 
The left-hand side of equation (\ref{eqAB}) is divisible by $\alpha - \beta$. 
Represent this expression in the form $(\alpha - \beta )Q(\alpha ,\beta ,a)$. 
The polynomial $Q$ depends in fact on $\alpha +\beta =-b_1a_1/b_0$, 
$\alpha \beta =b_2a_2/b_0$ and $a$, 
hence this is a polynomial in $a$ (denoted by $K(a)$). 

Clearly $K$ depends 
linearly on the variables $a_3$, $\ldots$, $a_n$. On the other hand 
$K$ is quasi-homogeneous. Hence $K$ is irreducible. Indeed, should $K$ be 
the product of two factors, then one of the two (denoted by $Z$) 
should not depend on any 
of the variables $a_3$, $\ldots$, $a_n$, i.e. $Z$ should be a polynomial in 
$a_1$ and $a_2$. 

This polynomial should divide the coefficients of 
all variables $a_3$, $\ldots$, $a_n$ in $K$. But for $3\leq s\leq n$ 
the coefficient of $a_s$ in 
$K$ equals (see (\ref{eqAB})) 
$c_s:=(\beta ^{n-k}\alpha ^{n-s}-\alpha ^{n-k}\beta ^{n-s})/(\alpha - \beta )$. 
Hence $Z$ divides $c_s-\beta c_{s-1}=\alpha ^{n-s-1}\beta ^{n-k}$ 
for all $s\neq k$, and by symmetry $Z$ divides $\alpha ^{n-k}\beta ^{n-s-1}$ 
for all $s\neq k$. Hence $Z=1$ and the polynomial $C_{n-2,k}$ equals 
$(\beta ^{n-k}P(\alpha ,a)-\alpha ^{n-k}P(\beta ,a))/(\alpha -\beta )$. Its 
quasi-homogeneous weight (QHW) is $2n-k-1$ (notation: QHW$(C_{n-2,k})=2n-k-1$). 
Indeed, one has to consider QHW$(\alpha )$ and QHW$(\beta )$ to be equal to 
$1$ because $\alpha$ and $\beta$ are roots of 
$P_*$ and their QHW is the same as the one of the 
variable $x$. 

Obviously QHW$(\tilde{D}_{n-2,k})=2$QHW$(U)+$QHW$(W)$ because 
$\tilde{D}_{n-2,k}=U(4UW-V^2)$ and $\tilde{D}_{n-2,k}$ is quasi-homogeneous. 
As $U=\omega a_2^{n-k}$, one has QHW$(U)=2(n-k)$. The polynomial 
$\tilde{D}_{n-2,k}$ contains a monomial $\tilde{\omega}a_2^n$, 
$\tilde{\omega}\neq 0$ (see Proposition~\ref{propmonom}). This monomial 
is contained also in $W=\tilde{D}_{n-2}|_{a_k=0}$ hence 
QHW$(\tilde{D}_{n-2})=$QHW$(W)=2n$. Thus 

$${\rm QHW}(\tilde{D}_{n-2,k})=2{\rm QHW}(U)+{\rm QHW}(\tilde{D}_{n-2})
=6n-4k~.$$
On the other hand one knows already that a priori 
$\tilde{D}_{n-2,k}=A_{n-2,k}B_{n-2,k}^{s_{n-2,k}}C_{n-2,k}^2$, $s_{n-2,k}\in \mathbb{N}$, 
$A_{n-2,k}=a_2^{n-k}$. Hence 

$$\begin{array}{ccccc}
s_{n-2,k}{\rm QHW}(B_{n-2,k})&=&{\rm QHW}(\tilde{D}_{n-2,k})-{\rm QHW}(A_{n-2,k})-
2{\rm QHW}(C_{n-2,k})&&\\ &=&6n-4k-2(n-k)-2(2n-k-1)&=&2~,\end{array}$$
and as $B_{n-2,k}=b_1^2a_1^2-4b_0b_2a_2$, one has QHW$(B_{n-2,k})=2$, so $s_{n-2,k}=1$.
\end{proof}

\begin{proof}[Proof for $k=1$ and $k=2$.] 
In order to deal with the cases $k=1$ and $k=2$ we need to know the 
degrees and 
quasi-homogeneous weights of certain polynomials in the variables~$a$:

\begin{lm}\label{lmQHW}
(1) $\deg _{a_1}\tilde{D}_{n-2}=n$, $\deg _{a_2}\tilde{D}_{n-2}=n$; 

(2) {\rm QHW}$(\tilde{D}_{n-2})=2n$, 
{\rm QHW}$(\partial \tilde{D}_{n-2}/\partial a_1)=2n-1$, 
{\rm QHW}$(\partial \tilde{D}_{n-2}/\partial a_2)=2n-2$; 

(3) {\rm QHW}$(\tilde{D}_{n-2,1})=n(3n-2)$; 

(4) {\rm QHW}$(\tilde{D}_{n-2,2})=2n(n-1)$;

(5) {\rm QHW}$(B_{n-2,1})=n(n-1)$, {\rm QHW}$(B_{n-2,2})=n(n-1)$; 

(6) {\rm QHW}$(C_{n-2,1})=n(n-1)$, {\rm QHW}$(C_{n-2,2})=n(n-1)/2$.
\end{lm}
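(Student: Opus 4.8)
The plan is to prove Lemma~\ref{lmQHW} by a combination of direct monomial bookkeeping (for the degrees) and exploitation of quasi-homogeneity together with the factorization $\tilde{D}_{n-2,k}=A_{n-2,k}B_{n-2,k}^{s_{n-2,k}}C_{n-2,k}^{r_{n-2,k}}$ already available from Remark~\ref{remintermediate}. Part (1) is immediate from Proposition~\ref{propmonom}, which asserts $\tilde{D}_m$ has degree $n$ in each $a_j$ with $1\le j\le n-m$; for $m=n-2$ this covers $a_1$ and $a_2$. For part (2), the claim $\mathrm{QHW}(\tilde{D}_{n-2})=2n$ is the specialization $m=n-2$ of the quasi-homogeneity statement in Proposition~\ref{propmonom} (weight $n(n-m)=2n$); the weights of $\partial\tilde{D}_{n-2}/\partial a_1$ and $\partial\tilde{D}_{n-2}/\partial a_2$ then follow because differentiating w.r.t.\ $a_j$ lowers quasi-homogeneous weight by exactly $j$, since $a_j$ has weight $j$.

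For parts (3) and (4), I would compute $\mathrm{QHW}(\tilde{D}_{n-2,k})=\mathrm{QHW}(\mathrm{Res}(\tilde{D}_{n-2},\partial\tilde{D}_{n-2}/\partial a_k,a_k))$ directly from the structure of the Sylvester matrix $S(K,L,a_k)$ with $K=\tilde{D}_{n-2}$ of degree $n$ in $a_k$ and $L=\partial\tilde{D}_{n-2}/\partial a_k$ of degree $n-1$ in $a_k$. Writing $\tilde{D}_{n-2}=\sum_{i=0}^n U_i a_k^i$ with $U_i\in\mathbb{C}[a^k]$ quasi-homogeneous of weight $2n-ik$ (since the whole polynomial has weight $2n$ and $a_k^i$ contributes weight $ik$), the resultant is a sum of products of $n-1$ of the $U_i$'s and $n$ of the coefficients of $L$; a standard weight count for such a Sylvester determinant gives a fixed value. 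Concretely, the resultant of two quasi-homogeneous polynomials in $a_k$ is itself quasi-homogeneous, and its weight equals (degree of $L$ in $a_k$)$\cdot\mathrm{QHW}(\text{leading term of }K)\,+\,\ldots$; the cleanest route is to use the product-over-roots formula $\mathrm{Res}(K,L,a_k)=U_n^{n-1}\prod L(\rho_i)$ over the roots $\rho_i$ of $K$ in $a_k$, but since those roots are not quasi-homogeneous in the remaining variables, I would instead just extract the weight from the known leading monomials: $\tilde{D}_{n-2,1}$ contains (via $A_{n-2,1}$ and the analysis in the $k=1,2$ proof) an identifiable top monomial whose weight is $n(3n-2)$, and similarly for $k=2$ the weight is $2n(n-1)$. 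Alternatively, and more robustly, I would derive (3) and (4) \emph{after} establishing (5) and (6), via the identity $\mathrm{QHW}(\tilde{D}_{n-2,k})=\mathrm{QHW}(A_{n-2,k})+s_{n-2,k}\mathrm{QHW}(B_{n-2,k})+r_{n-2,k}\mathrm{QHW}(C_{n-2,k})$; with $A_{n-2,1}=a_2^{n-1}$ (weight $2(n-1)$), $A_{n-2,2}=a_2^{n-2}$ (weight $2(n-2)$), $s=1$, $r=2$, this yields $n(3n-2)$ and $2n(n-1)$ respectively — so (3) and (4) are consistency checks tying the pieces together.

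For parts (5) and (6), the factor $B_{n-2,k}$ is $\mathrm{Res}(P_{n-2,k},P_{n-2,k}')$ for $k=1,2$, where $P_{n-2,k}=b_kP-x^{n-2}P_*$ is a degree-$n$ polynomial in $x$ (with the coefficient of $x^n$ equal to $b_k-b_0\ne 0$ and the coefficient of $x^k$ vanishing) whose coefficients are the $(b_k-b_j)a_j$. The discriminant-type resultant $\mathrm{Res}(Q,Q')$ of a degree-$n$ polynomial whose $j$-th coefficient has quasi-homogeneous weight $j$ is classically quasi-homogeneous of weight $n(n-1)$ — this is exactly the weight of the ordinary discriminant $\tilde{D}_1=\mathrm{Res}(P,P')$ of the degree-$n$ family, which one gets from the leading monomial $\pm a_n^{n-1}$ of weight $n(n-1)$. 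Since $P_{n-2,k}$ has the same weighted-degree structure, $\mathrm{QHW}(B_{n-2,k})=n(n-1)$ for both $k=1$ and $k=2$. For $C_{n-2,k}$: by Theorem~\ref{maintm}(3) and the explicit description in the $k\ge 3$ proof, $C_{n-2,k}=(\beta^{n-k}P(\alpha,a)-\alpha^{n-k}P(\beta,a))/(\alpha-\beta)$ with $\mathrm{QHW}(\alpha)=\mathrm{QHW}(\beta)=1$; evaluating for $k=1$ gives weight $2n-1-1=2(n-1)$ — wait, that must instead be read off correctly for the $k=1,2$ regime, where $P(\alpha,a)/\alpha^{n-k}$ is replaced by the appropriate normalization. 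The honest computation: $C_{n-2,k}$ is, up to the $\alpha-\beta$ division, built from $P(\alpha,a)$ and $P(\beta,a)$, each of weight $n$ in the $(\alpha,\beta,a)$-grading, but as a polynomial purely in $a$ one must account for how $\alpha+\beta$ and $\alpha\beta$ encode $a_1,a_2$; I expect $\mathrm{QHW}(C_{n-2,1})=n(n-1)$ and $\mathrm{QHW}(C_{n-2,2})=n(n-1)/2$ to emerge from the leading behavior of the defining equation of the (projected) Maxwell stratum under the dilatations, matched against the known $\mathrm{QHW}(\tilde{D}_{n-2,k})$.

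The main obstacle will be part (6): unlike $B_{n-2,k}$, whose weight is pinned down by a clean resultant/discriminant weight formula, the weight of $C_{n-2,k}$ requires correctly normalizing the defining equation of the projected Maxwell stratum in the variables $a^k$ and verifying that no spurious quasi-homogeneous factor has been absorbed. The safest strategy is to fix weights for $B$ via the discriminant-weight formula, fix $\mathrm{QHW}(A_{n-2,k})$ from the explicit form in Proposition~\ref{Amkprop}, compute $\mathrm{QHW}(\tilde{D}_{n-2,k})$ independently from the Sylvester-matrix weight count in parts (3)–(4), and then solve $\mathrm{QHW}(\tilde{D}_{n-2,k})=\mathrm{QHW}(A_{n-2,k})+s_{n-2,k}\mathrm{QHW}(B_{n-2,k})+r_{n-2,k}\mathrm{QHW}(C_{n-2,k})$ for $\mathrm{QHW}(C_{n-2,k})$, using the values $s_{n-2,k}=1$, $r_{n-2,k}=2$ that are the target of Proposition~\ref{propmn-2} — but since the lemma is a preparatory tool for that proof, I would instead get $s$ and $r$ here only a priori (both $\ge 1$) and close the loop by the subsequent argument. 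This circularity is manageable precisely because $B$ and $C$ have already been shown irreducible (Statement~\ref{stirred}), so their weights are well-defined invariants and the only unknowns are the exponents.
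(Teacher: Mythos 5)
Parts (1), (2) and (5) of your proposal are sound and essentially the paper's argument (the paper proves (5) by exhibiting the monomial $\pm c^{***}a_n^{n-1}$ in $\mathrm{Res}(P_{n-2,k},P_{n-2,k}')$, which is the concrete form of your discriminant-weight formula). The rest has two genuine gaps. For (3)--(4) you never actually perform the ``independent Sylvester-matrix weight count''; you either assert the answer or fall back on $\mathrm{QHW}(\tilde{D}_{n-2,k})=\mathrm{QHW}(A_{n-2,k})+s\,\mathrm{QHW}(B_{n-2,k})+r\,\mathrm{QHW}(C_{n-2,k})$ with $s=1$, $r=2$ --- but those exponents are exactly what Proposition~\ref{propmn-2} uses this lemma to determine, and with only $s,r\geq 1$ a priori the identity is underdetermined, so this route is circular. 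Your $A$-factors are also wrong: for $m=n-2$ and $k=1,2$ one is in the range $1\leq k\leq n-m$, so $A_{n-2,k}=a_n^{2-k}$ (i.e.\ $a_n$ and $1$), not $a_2^{n-k}$; with your values the claimed consistency $2(n-1)+n(n-1)+2n(n-1)=n(3n-2)$ is simply false. The paper's actual proof of (3)--(4) is a short direct computation: since $\tilde{D}_{n-2}$ contains a monomial $c^*a_n^2$, the Sylvester matrix $S(\tilde{D}_{n-2},\partial\tilde{D}_{n-2}/\partial a_k,a_k)$ has identifiable entries in positions $(j,j+n)$ and $(\nu+n-1,\nu)$ whose product yields a monomial $\pm(c^{\dagger}a_n)^n(c^*a_n^2)^{n-1}$ for $k=1$ (weight $n(3n-2)$) and $\pm(c^{**})^n(c^*a_n^2)^{n-1}$ for $k=2$ (weight $2n(n-1)$); quasi-homogeneity of the resultant then fixes the weight.

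Part (6) is not proven at all: ``I expect \dots to emerge'' is not an argument, and your fallback is again the circular identity. Note also that the formula $C_{n-2,k}=(\beta^{n-k}P(\alpha,a)-\alpha^{n-k}P(\beta,a))/(\alpha-\beta)$ from the $k\geq 3$ case cannot be transplanted to $k=1,2$, because there the eliminated variable is $a_1$ or $a_2$, i.e.\ one of the two coordinates encoding $(\alpha,\beta)$. The paper's route is to perform the Euclidean division of $P$ by $P_*$, write the remainder as $U^{\dagger}(a)x+V^{\dagger}(a)$ with $\mathrm{QHW}(U^{\dagger})=n-1$, $\mathrm{QHW}(V^{\dagger})=n$, identify explicit monomials ($\omega_1a_1^{n-1}$, $\omega_2a_{n-1}$ in $U^{\dagger}$; $\omega_3a_1^{n-2}a_2$, $\omega_4a_n$ in $V^{\dagger}$, with separate monomials in $a_2$ for $n$ even and odd when $k=2$), and then exhibit an explicit monomial of $\mathrm{Res}(U^{\dagger},V^{\dagger},a_k)$ --- e.g.\ $\pm(\omega_2a_{n-1})^{n-2}(\omega_3a_2)^{n-1}$ of weight $n(n-1)$ for $k=1$. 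Some such direct computation is indispensable: the whole point of the lemma is to pin down $\mathrm{QHW}(B_{n-2,k})$ and $\mathrm{QHW}(C_{n-2,k})$ \emph{independently} of the exponents $s_{n-2,k}$, $r_{n-2,k}$, so that the weight equation can afterwards be solved for those exponents.
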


For $k=1$ or $2$ one has to find positive integers $u$ and $v$ such that 

$${\rm QHW}(\tilde{D}_{n-2,k})=(2-k)n+u{\rm QHW}(B_{n-2,k})+v{\rm QHW}(C_{n-2,k})~,$$
because $A_{n-2,k}=a_n^{2-k}$. For $k=2$ parts (4), (5) and (6) of the lemma 
imply that 
$u=1$, $v=2$ 
is the only possible choice. For $k=1$ there remain two possibilities -- 
$(u,v)=(1,2)$ or $(u,v)=(2,1)$ -- so we need another lemma as well:

\begin{lm}\label{lmelim}
For $a_j=0$, $j\neq 1$, $n-1$, $n$, the polynomials $\tilde{D}_{n-2}$, 
$\tilde{D}_{n-2,1}$, $B_{n-2,1}$ and $C_{n-2,1}$ are of the form respectively 
(with $\Delta _i\neq 0$)
$$\begin{array}{lccclccc}
\tilde{D}_{n-2}&=&\Delta _1a_na_1^n+\Delta _2a_na_{n-1}a_1+\Delta _3a_n^2&,& 
\tilde{D}_{n-2,1}&=&\Delta _4a_n^{2n-1}a_{n-1}^n+\Delta _5a_n^{3n-2}&,\\ \\  
B_{n-2,1}&=&\Delta _6a_n^{n-1}+\Delta _7a_{n-1}^n&{\rm and}&
C_{n-2,1}&=&\Delta _8a_n^{n-1}&.\end{array}$$
\end{lm}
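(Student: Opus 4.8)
The plan is to prove Lemma~\ref{lmelim} by direct substitution $a_j=0$ for $j\neq 1,n-1,n$ into the four polynomials and reading off the surviving monomials, exactly in the style of the proofs of Proposition~\ref{Amkprop} and Lemma~\ref{lmsquared}. First I would compute $\tilde{D}_{n-2}|_{a_j=0}$. With $m=n-2$, the polynomial $P_*$ has degree $2$, so $S(P,P_*)$ is $(n+2)\times(n+2)$; after setting $a_j=0$ for $j\neq 1,n-1,n$, its nonzero entries are $1$ on the diagonal of the first two rows, $a_1$ one step to the right, $a_{n-1}$ in columns $\nu+n-1$, $a_n$ in columns $\nu+n$ (for $\nu=1,2$), and in the lower block the entries $b_0$, $b_1a_1$, $b_{n-1}a_{n-1}$, $b_na_n$. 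Developing along the last column (whose only nonzero entry is $b_na_n$) and then applying Lemma~\ref{lm2diag} to the remaining matrix, I expect to get precisely the shape $\Delta_1 a_na_1^n+\Delta_2 a_na_{n-1}a_1+\Delta_3 a_n^2$ with all three $\Delta_i\neq0$; one can double-check $\Delta_1,\Delta_3$ against the monomials $M_1=\pm b_1^na_1^n(1-b_0/b_1)a_n^{\,1}$ and the $N$-monomial containing $a_n^2$ from Proposition~\ref{propmonom}, and confirm that quasi-homogeneity (weight $2n$) forces exactly these three monomials.

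Next I would feed $D:=\tilde{D}_{n-2}|_{a_j=0}=\Delta_1 a_na_1^n+\Delta_2 a_na_{n-1}a_1+\Delta_3 a_n^2$, viewed as a degree-$n$ polynomial in $a_1$, into the Sylvester matrix $Y:=S(D,\partial D/\partial a_1,a_1)$ and compute $\det Y$. Since $D=\Delta_1 a_n\, a_1^n+\Delta_2 a_na_{n-1}\,a_1+\Delta_3 a_n^2$ has only the top, the degree-$1$ and the degree-$0$ coefficients nonzero, $Y$ is sparse: the leading coefficient is $\Delta_1 a_n$ (appearing $n-1$ times from $D$ and $n$ times, scaled by $n$, from $\partial D/\partial a_1=n\Delta_1 a_na_1^{n-1}+\Delta_2a_na_{n-1}$), the linear coefficient $\Delta_2 a_na_{n-1}$, and the constant term $\Delta_3 a_n^2$. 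I expect the determinant to collapse, via the same row-subtraction and repeated cofactor-expansion technique used throughout Section~\ref{Amksec}, to a sum of exactly two monomials, $\Delta_4 a_n^{2n-1}a_{n-1}^n+\Delta_5 a_n^{3n-2}$; the exponents are checkable by quasi-homogeneity using Lemma~\ref{lmQHW}(3), QHW$(\tilde{D}_{n-2,1})=n(3n-2)$, which pins down $\Delta_5 a_n^{3n-2}$, while the term $a_n^{2n-1}a_{n-1}^n$ has the same weight $(2n-1)\cdot 1+n\cdot(n-1)=n(3n-2)$.

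Then I would handle $B_{n-2,1}$ and $C_{n-2,1}$ directly from their definitions. By Theorem~\ref{maintm}(2) and the case $m=n-2$ computation in the previous proof, $B_{n-2,1}=\mathrm{Res}(P_{n-2,1},P_{n-2,1}')$ where $P_{n-2,1}=b_1P-x^{n-2}P_*$; setting $a_j=0$ for $j\neq1,n-1,n$ and noting that the $x^1$-coefficient of $P_{n-2,1}$ vanishes identically (Notation), $P_{n-2,1}$ reduces to $(b_1-b_0)x^n+(\text{term in }a_{n-1})x^2+b_1a_n$, essentially a trinomial, whose discriminant/resultant with its derivative is a two-term expression $\Delta_6 a_n^{n-1}+\Delta_7 a_{n-1}^n$ (both nonzero), again consistent with QHW$(B_{n-2,1})=n(n-1)$ from Lemma~\ref{lmQHW}(5). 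For $C_{n-2,1}$, I would use the explicit formula from the previous proof, $C_{n-2,k}=(\beta^{n-k}P(\alpha,a)-\alpha^{n-k}P(\beta,a))/(\alpha-\beta)$ with $k=1$, and restrict to $a_j=0$, $j\neq 1,n-1,n$; since the coefficient of $a_s$ in $C_{n-2,1}$ is $c_s=(\beta^{n-1}\alpha^{n-s}-\alpha^{n-1}\beta^{n-s})/(\alpha-\beta)$, the terms surviving the substitution are the constant-in-$a$ piece $c_0$ (giving a pure power of $a_n$ after using $\alpha\beta=b_2a_2/b_0\to$ but here $a_2=0$, so I must be careful) together with $c_{n-1}a_{n-1}$; I expect all but the $a_n^{n-1}$ monomial to drop, giving $C_{n-2,1}=\Delta_8 a_n^{n-1}$, matching QHW$(C_{n-2,1})=n(n-1)$.

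The main obstacle I anticipate is twofold: first, keeping careful track of the sparse Sylvester-matrix cofactor expansions so that the determinant for $\tilde{D}_{n-2,1}|_{a_j=0}$ really has exactly two surviving monomials and not a spurious third — the sign bookkeeping and the verification that no cross term of weight $n(3n-2)$ is missed; and second, the delicate point that setting $a_j=0$ for $j\neq1,n-1,n$ includes $a_2=0$, which degenerates $\alpha\beta=0$ in the $(\alpha,\beta)$ parametrization used for $C_{n-2,1}$, so I would argue via the coefficient formula $c_s$ for $C_{n-2,1}$ as a polynomial in $a$ rather than through the roots $\alpha,\beta$ directly, thereby avoiding the boundary of the parametrization. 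Once the four substituted forms are established, one reads off directly that $(u,v)=(2,1)$ is impossible (it would predict the $a_n$-exponent $(2-1)+2(n-1)+1\cdot(n-1)=3n-3\neq 3n-2$ in $\Delta_5a_n^{3n-2}$, and also an incompatible $a_{n-1}$-structure), leaving $(u,v)=(1,2)$, which completes the proof that $s_{n-2,1}=1$, $r_{n-2,1}=2$.
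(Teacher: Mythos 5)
Your treatment of $\tilde{D}_{n-2,1}$ and $B_{n-2,1}$ follows essentially the paper's route (sparse Sylvester matrices, a row subtraction, then Lemma~\ref{lm2diag}), but two of the four items have real problems. For $\tilde{D}_{n-2}$ you have misdescribed the matrix: with $m=n-2$ the polynomial $P_*=b_0x^2+b_1a_1x+b_2a_2$ is a quadratic whose coefficients involve only $a_1$ and $a_2$, so the lower block of $S(P,P_*)$ contains no entries $b_{n-1}a_{n-1}$ or $b_na_n$, and after setting $a_2=0$ the unique nonzero entry of the last column is $a_n$ in position $(2,n+2)$, coming from a $P$-row, not a diagonal entry $b_na_n$. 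Moreover the matrix left after that expansion has four nonzero entries in its first row ($1$, $a_1$, $a_{n-1}$, $a_n$), so Lemma~\ref{lm2diag} does not apply to it; the paper instead performs three nested cofactor expansions along last columns. Your fallback of invoking Proposition~\ref{propmonom} and quasi-homogeneity secures $\Delta_1\neq 0$ and $\Delta_3\neq 0$ but says nothing about $\Delta_2\neq 0$, which has to come out of the actual expansion.

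The more serious gap is $C_{n-2,1}$. The formula $C_{n-2,k}=(\beta^{n-k}P(\alpha ,a)-\alpha^{n-k}P(\beta ,a))/(\alpha -\beta )$ is established in the paper only for $3\leq k\leq n$, where one eliminates $a_k$ linearly from $P(\alpha ,a)=P(\beta ,a)=0$; for $k=1$ that expression still depends on $a_1$ through $\alpha$ and $\beta$, so it cannot equal $C_{n-2,1}\in \mathbb{C}[a^1]$. Concretely, under your substitution $\alpha \beta =0$ and the expression reduces to $-a_n(\alpha^{n-2}+\cdots +\beta^{n-2})=\pm a_n(b_1a_1/b_0)^{n-2}$, which is linear in $a_n$ --- not $\Delta _8a_n^{n-1}$. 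Passing to the coefficients $c_s$ does not repair this, since they are still functions of $\alpha ,\beta$, hence of $a_1$. The correct route (the paper's) is to take the Euclidean remainder $U^{\dagger}(a)x+V^{\dagger}(a)$ of $P$ by $P_*$ and set $C_{n-2,1}=\mathrm{Res}(U^{\dagger},V^{\dagger},a_1)$; under the substitution $U^{\dagger}=\alpha _4a_{n-1}+\alpha _5a_1^{n-1}$ and $V^{\dagger}=\alpha _6a_n$, so the resultant is $(\alpha _6a_n)^{n-1}$. Finally, a consequential slip in your closing paragraph: the pure $a_n$-exponent for $(u,v)=(2,1)$ is $1+2(n-1)+(n-1)=3n-2$, the same as for $(1,2)$, so no exponent mismatch arises; the actual obstruction is that $(u,v)=(2,1)$ would force three distinct monomials in $\tilde{D}_{n-2,1}|_{a_j=0}$ where only two are present.
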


The lemma implies that it is possible to have $(u,v)=(1,2)$, but not 
$(u,v)=(2,1)$. Indeed, otherwise the product 
$\tilde{D}_{n-2,1}=A_{n-2,1}B_{n-2,1}^2C_{n-2,1}$, with 
$A_{n-2,1}=a_n$, should contain three different monomials
whereas it contains only two.
\end{proof}

\begin{proof}[Proof of Lemma~\ref{lmQHW}.]
Parts (1) and (2) follow directly from Proposition~\ref{propmonom}. To prove 
parts (3) and (4) one has to observe that as the polynomial $\tilde{D}_{n-2}$ 
contains a monomial $c^*a_n^2$, $c^*\neq 0$, 
the $(2n-1)\times (2n-1)$-Sylvester matrices 
$S^*_k:=S(\tilde{D}_{n-2},\partial \tilde{D}_{n-2}/\partial a_k,a_k)$, 
$k=1$ or $2$, 
contain this monomial 
in positions $(j,j+n)$, $j=1$, $\ldots$, $n-1$ and only there. The matrix 
$S^*_1$ (resp. $S^*_2$) has entries $c^{\dagger}a_n$, $c^{\dagger}\neq 0$
(resp. $c^{**}\neq 0$) 
in positions 
$(\nu +n-1,\nu )$, $\nu =1$, $\ldots$, $n$. Hence 
$\tilde{D}_{n-2,k}$ contains a 
monomial $\pm (c^{\dagger}a_n)^n(c^*a_n^2)^{n-1}$ for $k=1$ and 
$\pm (c^{**})^n(c^*a_n^2)^{n-1}$ for $k=2$ whose quasi-homogeneous weight is 
respectively $n(3n-2)$ and $2n(n-1)$. 

To prove part (5) recall that the 
$(2n-1)\times (2n-1)$-Sylvester matrix 
$S^0:=S(P_{n-2,k},P_{n-2,k}')$, $k=1$ or $2$, 
has entries of the form $c^{**}a_n$, $c^{**}\neq 0$, in positions 
$(j,j+n)$, $j=1$, $\ldots$, $n-1$ and only there, and constant nonzero terms 
in positions $(\nu +n-1,\nu )$, $\nu =1$, $\ldots$, $n$. Thus $B_{n-2,k}$ 
contains a monomial $\pm c^{***}(a_n)^{n-1}$, $c^{***}\neq 0$ and  
QHW$(B_{n-2,k})=n(n-1)$. 
%For $k=1$, on the $j$th position on the 
%diagonal the matrix $S^0$ has entries 
%$\tilde{c}$ ($j=1$, $\ldots$, $n-1$) and $c^0a_{n-1}$ 
%($j=n$, $\ldots$, $2n-1$), 
%$\tilde{c}\neq 0\neq c^0$. Hence $\deg _{a_{n-1}}(B_{n-2,1})=n$. 
 
For the proof of part (6) we need to recall that the factors $C_{n-2,k}$ are 
related to polynomials $P$ divisible by $P_*$. When one performs this 
Euclidean division one obtains a rest of the form 
$U^{\dagger}(a)x+V^{\dagger}(a)$, where 
$U^{\dagger},V^{\dagger}\in \mathbb{C}[a]$, QHW$(U^{\dagger})=n-1$, 
QHW$(V^{\dagger})=n$, $U^{\dagger}$ (resp. $V^{\dagger}$) 
contains monomials 
$\omega _1a_1^{n-1}$ and $\omega _2a_{n-1}$ (resp. $\omega _3a_1^{n-2}a_2$ and 
$\omega _4a_n$), 
$\omega _i\neq 0$. (To see that the monomials $\omega _1a_1^{n-1}$ and 
$\omega _3a_1^{n-2}a_2$ are present one has to recall that at each step of the 
Euclidean division one replaces a term $Lx^s$, $L\in \mathbb{C}[a]$, 
by the sum 
$-L(b_1/b_0)a_1x^{s-1}-L(b_2/b_0)a_2x^{s-2}$.) 

To obtain the factor $C_{n-2,1}$ one has to eliminate $a_1$ from the system 
of equations $U^{\dagger}(a)=V^{\dagger}(a)=0$, i.e. one has to find the subset 
in the space of variables $a^1$ for which $U^{\dagger}$ and $V^{\dagger}$ 
have a common zero when 
considered as polynomials in $a_1$. The $(2n-3)\times (2n-3)$-Sylvester 
matrix $S(U^{\dagger},V^{\dagger},a_1)$ contains terms $\omega _2a_{n-1}$ 
in positions 
$(j,j+n-1)$, $j=1$, $\ldots$, $n-2$, and terms $\omega _3a_2$ in positions 
$(\nu +n-2,\nu )$, $\nu =1$, $\ldots$, $n-1$. Hence $C_{n-2,1}$ contains a 
monomial $\pm (\omega _2a_{n-1})^{n-2}(\omega _3a_2)^{n-1}$, of 
quasi-homogeneous weight $n(n-1)$. 

The proof of the second statement of part (6) is performed separately 
for the cases of even and odd $n$. If $n$ is even, then $U^{\dagger}$ 
(resp. $V^{\dagger}$) 
contains monomials $\Omega _1a_1a_2^{n/2-1}$ and $\Omega _2a_{n-1}$ 
(resp. $\Omega _3a_2^{n/2}$ and $\Omega _4a_n$), $\Omega _i\neq 0$. The 
$(n-1)\times (n-1)$-Sylvester matrix  $S(U^{\dagger},V^{\dagger},a_2)$ 
contains terms 
$\Omega _4a_n$ in positions 
$(j,j+n/2)$, $j=1$, $\ldots$, $n/2-1$, and $\Omega _1a_1$ in positions 
$(\nu +n/2-1,\nu )$, $\nu =1$, $\ldots$, $n/2$. Hence $C_{n-2,2}$ contains a 
monomial $\pm (\Omega _4a_n)^{n/2-1}(\Omega _1a_1)^{n/2}$, of 
quasi-homogeneous weight $n(n-1)/2$. 

When $n$ is odd, then $U^{\dagger}$ (resp. $V^{\dagger}$) 
contains monomials $\tilde{\Omega}_1a_2^{(n-1)/2}$ and $\tilde{\Omega}_2a_{n-1}$ 
(resp. $\tilde{\Omega}_3a_1a_2^{(n-1)/2}$ and $\tilde{\Omega}_4a_n$), 
$\tilde{\Omega}_i\neq 0$. The 
$(n-1)\times (n-1)$-Sylvester matrix  
$S(U^{\dagger},V^{\dagger},a_2)$ contains terms 
$\tilde{\Omega}_2a_{n-1}$ in positions 
$(j,j+(n-1)/2)$, $j=1$, $\ldots$, $(n-1)/2$, and 
$\tilde{\Omega}_3a_1$ in positions 
$(\nu +(n-1)/2,\nu )$, $\nu =1$, $\ldots$, $(n-1)/2$. Thus 
$C_{n-2,2}$ contains a monomial 
$\pm (\tilde{\Omega}_2a_{n-1}\tilde{\Omega}_3a_1)^{(n-1)/2}$, of 
quasi-homogeneous weight $n(n-1)/2$.

\end{proof}

\begin{proof}[Proof of Lemma~\ref{lmelim}.]
One can develop $\det S(P,P_*)$ w.r.t. the last column in which there 
is a single nonzero entry ($a_n$, in position $(2,n+2)$). Hence 
$\tilde{D}_{n-2}=(-1)^na_n\det S^{\sharp}$, where 
$S^{\sharp}:=[S(P,P_*)]_{2,n+2}$. The last column of $S^{\sharp}$ contains only 
two nonzero entries ($a_n$ in position $(1,n+1)$ and $b_1a_1$ 
in position $(n+1,n+1)$), therefore 

$$\det S^{\sharp}=(-1)^na_n\det S^{\sharp 1}+b_1a_1\det S^{\sharp 2}~~,~~
{\rm where}~~S^{\sharp 1}:=[S^{\sharp}]_{1,n+1}~~,~~
S^{\sharp 2}:=[S^{\sharp}]_{n+1,n+1}~.$$
The matrix $S^{\sharp 1}$ is upper-triangular, with diagonal 
entries equal to $b_0$, so $\det S^{\sharp 1}=b_0^n$, while $S^{\sharp 2}$ contains 
only two nonzero entries in its last column ($a_{n-1}$ in position $(1,n)$ and 
$b_1a_1$ in position $(n,n)$). Hence 

$$\det S^{\sharp 2}=(-1)^{n+1}a_{n-1}\det S^{\sharp 3}+b_1a_1\det S^{\sharp 4}
~~,~~{\rm where}~~S^{\sharp 3}:=[S^{\sharp 2}]_{1,n}~~,~~
S^{\sharp 4}:=[S^{\sharp 2}]_{n,n}~.$$
The matrix $S^{\sharp 3}$ is upper-triangular, with diagonal entries equal to 
$b_0$, so $\det S^{\sharp 3}=b_0^{n-1}$. The matrix $S^{\sharp 4}$ becomes 
lower-triangular after subtracting its second row multiplied by $1/b_1$ from 
the first one, 
with diagonal entries $1-b_0/b_1$, $b_1a_1$, $\ldots$, $b_1a_1$, from which 
the form of $\tilde{D}_{n-2}$ follows. 

Hence the $(2n-1)\times (2n-1)$-Sylvester matrix 
$S(\tilde{D}_{n-2},\partial \tilde{D}_{n-2}/\partial a_1,a_1)$ has only the 
following nonzero entries, in the following positions:

$$\begin{array}{rlllll}
\Delta _1a_n&(j,j)~,&\Delta _2a_na_{n-1}&(j,j+n-1)~,&
\Delta _3a_n^2&(j,j+n)~,\\ \\ &&&&&j=1,\ldots ,n-1,\\ \\ 
n\Delta _1a_n&(\nu +n-1,\nu )~,&\Delta _2a_na_{n-1}&(\nu +n-1,\nu 
+n-1)~,&&\nu =1,\ldots ,n~.
\end{array}$$
One can subtract the $(j+n-1)$st row from the $j$th one ($j=1,\ldots ,n-1$) 
to make disappear the terms $\Delta _2a_na_{n-1}$ in positions 
$(j,j+n-1)$. This does not change the determinant; the entries 
$\Delta _1a_n$ in positions $(j,j)$ become $(1-n)\Delta _1a_n$. The form of 
$\tilde{D}_{n-2,1}$ follows now from Lemma~\ref{lm2diag}.  

For $a_j=0$, $j\neq 1$, $n-1$, $n$, the polynomial $P_{n-2,1}$ is of the form 
$\alpha _1x^n+\alpha _2a_{n-1}x+\alpha _3a_n$, $\alpha _i\neq 0$, so 
the $(2n-1)\times (2n-1)$-Sylvester matrix $S(P_{n-2,1},P_{n-2,1}')$ 
has nonzero entries only

$$\begin{array}{rllllll}
\alpha _1&{\rm at}~(j,j)~,&\alpha _2a_{n-1}&{\rm at}~(j,j+n-1)~,&
\alpha _3a_n&{\rm at}~(j,j+n)~,&j=1,\ldots ,n-1,\\ \\ 
n\alpha _1&{\rm at}~(\nu ,\nu )~,&\alpha _2a_{n-1}&{\rm at}~(\nu ,\nu 
+n-1)~,&&&\nu =1,\ldots ,n~.
\end{array}$$
By analogy with the reasoning about $\tilde{D}_{n-2,1}$ one finds that 
$B_{n-2,1}=\Delta _6a_n^{n-1}+\Delta _7a_{n-1}^n$. 

To justify the form of $C_{n-2,1}$ it suffices to observe that for 
$a_j=0$, $j\neq 1$, $n-1$, $n$, one has (see the definition of 
$U^{\dagger}$ and $V^{\dagger}$ in the proof of Lemma~\ref{lmQHW}) 
$U^{\dagger}=\alpha _4a_{n-1}+\alpha _5a_1^{n-1}$, 
$V^{\dagger}=\alpha _6a_n$, $\alpha _i\neq 0$, so $\deg _{a_1}U^{\dagger}=n-1$ and   
$\deg _{a_1}V^{\dagger}=0$. When eliminating $a_1$ from the system of 
equalities $U^{\dagger}=V^{\dagger}=0$ one obtains Res$(U^{\dagger},V^{\dagger},a_1)=0$, 
i.e. $(\alpha _6a_n)^{n-1}=0$. 
\end{proof}

\section{The proof of $s_{m,1}=1$\protect\label{seck=1}}

In the present section we prove the following 

\begin{prop}\label{propk=1}
With the notation of Remark~\ref{remintermediate} one has $s_{m,1}=1$.
\end{prop}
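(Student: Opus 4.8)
The plan is to follow the strategy already used in the case $m=n-2$ (Proposition~\ref{propmn-2}), namely to pin down $s_{m,1}$ by a quasi-homogeneous weight computation combined with a count of monomials after a well-chosen specialization $a_j=0$. Recall from Remark~\ref{remintermediate} that a priori $\tilde{D}_{m,1}=A_{m,1}B_{m,1}^{s_{m,1}}C_{m,1}^2$ with $A_{m,1}=a_n^{n-m-1}$ and $s_{m,1}\in\mathbb{N}$. First I would record the relevant weights: by Proposition~\ref{propmonom}, $\tilde{D}_m$ is quasi-homogeneous of weight $n(n-m)$, contains the monomial $M_1=\pm b_1^na_1^n(1-b_0/b_1)a_n^{n-m-1}$, and is of degree $n$ in $a_1$; the Sylvester matrix $S(\tilde{D}_m,\partial\tilde{D}_m/\partial a_1,a_1)$ is then $(2n-1)\times(2n-1)$. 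To get $\mathrm{QHW}(\tilde{D}_{m,1})$ I would exhibit an explicit monomial in $\tilde{D}_{m,1}$: since $\tilde{D}_m$ contains a pure power $\pm b_0^{\,?}a_n^{\,n-m}$-type monomial (the monomial $N_{m}$-type term, or more simply the term obtained by setting all $a_j=0$ except $a_n$, which by Proposition~\ref{propmonom}, part on $N_s$, is nonzero), the first column of the Sylvester matrix has its two nonzero entries equal to $\Omega a_n^{n-m-1}$ and $\Omega' a_n^{n-m-1}$, and developing as in the proof of part~(3) of Proposition~\ref{Amkprop} yields a monomial of the form $\pm(\Omega a_n^{n-m})^{\,n-1}(\Omega' a_n^{n-m-1})^{\,n}$, whence $\mathrm{QHW}(\tilde{D}_{m,1})=n(n-m)(n-1)+n(n-m-1)$.

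Next I would compute $\mathrm{QHW}(B_{m,1})$ and $\mathrm{QHW}(C_{m,1})$. By Theorem~\ref{maintm}(2), $B_{m,1}=\mathrm{Res}(P_{m,1},P_{m,1}')$, where $P_{m,1}=b_1P-xP_*$ has leading coefficient $b_1-b_0\ne0$, vanishing coefficient of $x^1$, and $\mathrm{QHW}(P_{m,1})=n$ when $x$ has weight $1$; its resultant with its derivative is quasi-homogeneous of weight $(n-1)\cdot\mathrm{QHW}(P_{m,1})+\,(\text{weight of the derivative factor})$, which I would compute from the Sylvester matrix exactly as in the proof of Lemma~\ref{lmQHW}(5), getting $\mathrm{QHW}(B_{m,1})=n(n-1)$. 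For $C_{m,1}$, the equation $C_{m,1}=0$ defines the projection of the closure of the Maxwell stratum into $\mathcal{A}^1$ (Theorem~\ref{maintm}(3)); parametrizing two roots $\xi,\eta$ of $P_*$ (each of weight $1$) and eliminating $a_1$ from $P(\xi,a)=P(\eta,a)=0$, one sees as in the $m=n-2$ argument that $C_{m,1}$ is obtained by eliminating $a_1$ from a rest $U^\dagger x+V^\dagger$ of the Euclidean-type division, and a Sylvester-matrix monomial count gives $\mathrm{QHW}(C_{m,1})$ explicitly. Once these three weights are in hand, the quasi-homogeneity identity $\mathrm{QHW}(\tilde{D}_{m,1})=\mathrm{QHW}(A_{m,1})+s_{m,1}\mathrm{QHW}(B_{m,1})+2\,\mathrm{QHW}(C_{m,1})$ determines $s_{m,1}$, and one checks the arithmetic closes only with $s_{m,1}=1$.

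The main obstacle I anticipate is the same ambiguity that arose for $k=1$ in Proposition~\ref{propmn-2}: the weight equation may leave two candidate integer solutions for $s_{m,1}$ (since $\mathrm{QHW}(B_{m,1})$ and $\mathrm{QHW}(C_{m,1})$ need not be coprime-ish enough to force uniqueness). To resolve this I would specialize as in Lemma~\ref{lmelim}: set $a_j=0$ for $j$ outside a small distinguished set (for $k=1$, the natural choice is to keep $a_1$, $a_n$, and one more variable so that $\tilde{D}_m$, $B_{m,1}$ and $C_{m,1}$ all retain only a controlled number of monomials), then count monomials on both sides of $\tilde{D}_{m,1}=a_n^{n-m-1}B_{m,1}^{s_{m,1}}C_{m,1}^2$. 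If the specialized $\tilde{D}_{m,1}$ has strictly fewer monomials than $a_n^{n-m-1}B_{m,1}^2C_{m,1}$ would, the value $s_{m,1}=2$ is excluded and only $s_{m,1}=1$ remains. The delicate point is choosing the right specialization so that none of the three factors collapses to zero while the monomial count stays small enough to be decisive; I expect the proof of part~(3) of Proposition~\ref{Amkprop}, together with the structure of $P_{m,1}$ (which has the $x^1$-coefficient equal to zero), to supply exactly the specializations $a_j=0$, $j\ne1,n-1,n$ (or an analogous triple), that make this work, mirroring Lemma~\ref{lmelim}.
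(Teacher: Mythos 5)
Your fallback route is, in substance, the paper's actual proof: one sets $a_j=0$ for $j\neq 1,\ell ,n$ with $n-m+1\leq \ell \leq n-1$ (the paper takes $\ell =n-1$), shows by developing the Sylvester matrix and applying Lemma~\ref{lm2diag} that under this specialization $\tilde{D}_m=\Omega _1a_n^{n-m-1}a_1^n+\Omega _2a_n^{n-m-1}a_{n-1}a_1+\Omega _3a_n^{n-m}$, and deduces that $\tilde{D}_{m,1}$ specializes to $a_n^{(n-m-1)(2n-1)}(\Omega _4a_{n-1}^n+\Omega _5a_n^{n-1})$ --- a two-monomial expression in $a_{n-1},a_n$ once the power of $a_n$ is factored out. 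Since the specialized $B_{m,1}=\mathrm{Res}(P_{m,1},P_{m,1}')$ contains both $\alpha a_n^{n-1}$ and $\beta a_{n-1}^n$ with $\alpha \beta \neq 0$, divisibility of that expression by $B_{m,1}^s$ with $s\geq 2$ would force at least three distinct monomials in $a_{n-1}$ and $a_n$; hence $s_{m,1}=1$. So you guessed the right specialization and the right mechanism, but you left the decisive computation (the explicit two-monomial form of the specialized $\tilde{D}_{m,1}$ and the binomial form of the specialized $B_{m,1}$) unverified, and that computation is the entire content of the paper's proof.

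Your primary route, by contrast, has a genuine logical problem. You write the a priori factorization as $A_{m,1}B_{m,1}^{s_{m,1}}C_{m,1}^{2}$, but Remark~\ref{remintermediate} only gives $A_{m,1}B_{m,1}^{s_{m,1}}C_{m,1}^{r_{m,1}}$ with $r_{m,1}\in \mathbb{N}$ unknown; the equality $r_{m,1}=2$ for $k=1$ and $2\leq m<n-2$ is established only afterwards (Statement~\ref{stAA}), and its proof explicitly quotes Proposition~\ref{propk=1}, so assuming it here is circular. Without $r_{m,1}$ the weight identity is one equation in two unknowns and cannot pin down $s_{m,1}$; even granting $r_{m,1}=2$, it leaves two candidates (exactly as happens for $m=n-2$, $k=1$). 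There is also an arithmetic slip: the monomial $\pm (\Omega a_n^{n-m})^{n-1}(\Omega 'a_n^{n-m-1})^{n}$ has quasi-homogeneous weight $n(n-m)(n-1)+n^2(n-m-1)$, not $n(n-m)(n-1)+n(n-m-1)$. The clean fix is to drop the weight computation altogether and run the specialization argument directly: it excludes every $s_{m,1}\geq 2$ by divisibility, with no reference to $r_{m,1}$ and no need to first narrow the candidates to $\{1,2\}$.
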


The proof of the proposition makes use of the following lemma:

\begin{lm}\label{lmk=1}
Set $a_j=0$ for $j\neq 1$, $\ell$ and $n$, where $n-m+1\leq \ell \leq n-1$. 
Then $S(P,P_*)$ is of the form 
$\Omega _1a_n^{n-m-1}a_1^n+\Omega _2a_n^{n-m-1}a_{\ell}a_1^{n-\ell}+\Omega _3a_n^{n-m}$.
\end{lm}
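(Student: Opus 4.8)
\textbf{Plan of proof of Lemma~\ref{lmk=1}.}
The plan is to write down the Sylvester matrix $S(P,P_*)$ explicitly after the substitution $a_j=0$ for $j\neq 1,\ell,n$, list its nonzero entries together with their positions, and then extract the determinant by a sequence of row operations and cofactor expansions exactly in the style of Proposition~\ref{Amkprop}. Since $P=x^n+a_1x^{n-1}+a_\ell x^{n-\ell}+a_n$ and $P_*=b_0x^{n-m}+b_1a_1x^{n-m-1}+b_\ell a_\ell x^{n-m-\ell}+b_0^{-1}\cdot 0\cdots$ (only the coefficients $b_0,b_1a_1,b_\ell a_\ell$ survive, because $n-m<n-m+\ell$ forces the $a_n$-term of $P_*$ to be absent when $\ell\geq 1$, and in fact for $\ell\le n-m$ there would be a fourth surviving term, but here $\ell\geq n-m+1$ so $P_*$ has exactly the three coefficients $b_0$, $b_1a_1$, $b_\ell a_\ell$ provided $\ell\le n-m$ fails — i.e. $P_*$ has only two surviving monomials $b_0x^{n-m}$ and $b_1a_1x^{n-m-1}$ when $\ell>n-m$). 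So in the relevant range $n-m+1\le\ell\le n-1$ the polynomial $P_*$ reduces to $b_0x^{n-m}+b_1a_1x^{n-m-1}$, whereas $P$ keeps all three terms; I would record this simplification first, as it is what makes the computation tractable.

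Next I would perform the row reduction that kills the $a_1$-entries in the top $n-m$ rows: for $\mu=1,\dots,n-m$ subtract $1/b_1$ times row $n-m+\mu$ from row $\mu$, which replaces the entries $1$ on the diagonal of the upper block by $\Omega_*:=1-b_0/b_1\neq 0$ and removes the $a_1$'s there, leaving a matrix $T$ with $\det T=\det S(P,P_*)$ whose nonzero entries I would tabulate: $\Omega_*$ at $(j,j)$, $a_\ell$ at $(j,j+\ell)$, $a_n$ at $(j,j+n)$ for $j=1,\dots,n-m$, and $b_0$ at $(\nu+n-m,\nu)$, $b_1a_1$ at $(\nu+n-m,\nu+1)$ for $\nu=1,\dots,n$. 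Then I would expand $\det T$ repeatedly along the last column, which contains a single nonzero entry $a_n$ coming from a top row; doing this $n-m-1$ times peels off a factor $a_n^{n-m-1}$ and leaves a square matrix $T_1$ to which Lemma~\ref{lm2diag} applies (the surviving array being of the ``two-diagonals-plus-a-wrap'' shape). Lemma~\ref{lm2diag} then yields $\det T_1=r_1\cdots r_p \pm q_1\cdots q_p$, and reading off the two products gives two monomials: one built from the $\Omega_*$'s and the $b_1a_1$'s, contributing $\Omega a_1^n$, and one built from the $a_\ell$'s, the $b_0$'s and the remaining $b_1a_1$'s, contributing $\Omega a_\ell a_1^{\,n-\ell}$ (the power of $a_1$ here being dictated by how many $b_1a_1$ entries are forced into the second product once the single $a_\ell$ is chosen). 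Assembling, $S(P,P_*)=a_n^{n-m-1}(\Omega_1 a_1^n+\Omega_2 a_\ell a_1^{n-\ell})+\Omega_3 a_n^{n-m}$, which is the claimed form.

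\textbf{Main obstacle.} The only genuinely delicate point is bookkeeping: one must check that after the row reduction the matrix really does satisfy the hypotheses of Lemma~\ref{lm2diag} with the right parameters $p$ and $s$, and that the single $a_\ell$-entry, once selected in a nonzero term of the determinant, forces \emph{exactly} the exponent $n-\ell$ on $a_1$ and the exponent $n-m-1$ on $a_n$ — in other words, that no third monomial in $a_1,a_\ell,a_n$ can appear and that the coefficients $\Omega_i$ are genuinely nonzero (not accidentally cancelling). This is handled exactly as in the proofs of parts (1)–(4) of Proposition~\ref{Amkprop}: the block-diagonal/triangular structure of the minor obtained by deleting the rows and columns of the chosen $a_\ell$ and $a_n$ entries pins down the coefficient as a nonzero product of $\Omega_*=1-b_0/b_1$, $b_0$ and $b_1$ powers, and the genericity hypothesis $0\neq b_i\neq b_j\neq 0$ guarantees $\Omega_*\neq 0$. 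I would leave the explicit evaluation of $\Omega_1,\Omega_2,\Omega_3$ to the reader, as in the analogous places earlier in the paper, since only their non-vanishing and the pattern of exponents matter for the subsequent proof of Proposition~\ref{propk=1}.
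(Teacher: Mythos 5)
Your reduction of $P_*$ to $b_0x^{n-m}+b_1a_1x^{n-m-1}$ (valid because $\ell\geq n-m+1$), the row operations killing the $a_1$-entries in the top $n-m$ rows, and the $(n-m-1)$-fold expansion along the last column peeling off $a_n^{n-m-1}$ are all correct and match the spirit of the paper's computation. The gap is in the final step: the $(n+1)\times(n+1)$ matrix $T_1$ that remains does \emph{not} satisfy the hypotheses of Lemma~\ref{lm2diag}. Its nonzero entries are $\Omega_*$ at $(1,1)$, $b_1a_1$ at $(\nu+1,\nu+1)$ and $b_0$ at $(\nu+1,\nu)$ for $\nu=1,\dots,n$, plus \emph{two} isolated entries in the first row, $a_\ell$ at $(1,\ell+1)$ and $a_n$ at $(1,n+1)$. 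The diagonal together with the subdiagonal-plus-corner set $\{(\nu+1,\nu)\}\cup\{(1,n+1)\}$ would indeed be the shape of Lemma~\ref{lm2diag} with $p=n+1$, $s=n$, but the extra entry $a_\ell$ at $(1,\ell+1)$ breaks the hypothesis that the only nonzero entries lie on those two (cyclic) diagonals. Consequently the lemma's two-term conclusion $r_1\cdots r_p\pm q_1\cdots q_p$ cannot be the answer here: the determinant you need has \emph{three} monomials, and your own accounting betrays this — you extract only $\Omega a_1^n$ and $\Omega a_\ell a_1^{n-\ell}$ from the lemma and then the term $\Omega_3a_n^{n-m}$ materializes in the assembled formula with no derivation.

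The repair is short but must be made explicit. Either expand $\det T_1$ along its first row, which has exactly three nonzero entries $\Omega_*$, $a_\ell$, $a_n$: the complementary minor of $\Omega_*$ is lower-triangular with diagonal $b_1a_1$ (giving $\Omega_*(b_1a_1)^n$), that of $a_n$ is upper-triangular with diagonal $b_0$ (giving $\pm b_0^na_n$), and that of $a_\ell$ is block-triangular with blocks of sizes $\ell$ and $n-\ell$ and diagonals $b_0$ and $b_1a_1$ (giving $\pm b_0^\ell(b_1a_1)^{n-\ell}a_\ell$); all three coefficients are visibly nonzero since $b_0\neq b_1$. Or follow the paper's route of successive expansions along the last column, which splits off the $a_n$- and $b_1a_1$-contributions one at a time and reduces each remaining minor to a triangular matrix. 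Either way Lemma~\ref{lm2diag} is not the right tool for this particular matrix, and as written your plan does not produce the third monomial.
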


\begin{proof}[Proof of Proposition~\ref{propk=1}:] 
Lemma~\ref{lmk=1} with $\ell =n-1$ implies that the matrix 
$S(\tilde{D}_m,\partial \tilde{D}_m/\partial a_1, a_1)$ has only the following 
nonzero entries, in the following positions:

$$\begin{array}{ccccl}
\Omega _1a_n^{n-m-1}&(j,j)&,&\Omega _2a_n^{n-m-1}a_{n-1}&(j,j+n-1)~~,\\ \\ 
\Omega _3a_n^{n-m}&(j,j+n)&,&&j=1,\ldots ,n-1~,\\ \\ 
n\Omega _1a_n^{n-m-1}&(\nu +n-1,\nu )&,&\Omega _2a_n^{n-m-1}a_{n-1}&
(\nu +n-1,\nu +n-1)~~,\\ \\ &&&&\nu =1,\ldots ,n~.
\end{array}$$
Subtract for $j=1$, $\ldots$, $n-1$ its $(j+n-1)$st row from the $j$th one. 
This preserves its determinant and leaves only the following nonzero entries, 
in the following positions:

$$\begin{array}{ccccll}
(1-n)\Omega _1a_n^{n-m-1}&(j,j)&,&\Omega _3a_n^{n-m}&(j,j+n)&,\\ \\ 
&&&&j=1,\ldots ,n-1&,\\ \\ 
n\Omega _1a_n^{n-m-1}&(\nu +n-1,\nu )&,&\Omega _2a_n^{n-m-1}a_{n-1}&
(\nu +n-1,\nu +n-1)&,\\ \\ &&&&\nu =1,\ldots ,n~.&
\end{array}$$ 
The new matrix satisfies the conditions of Lemma~\ref{lm2diag} with 
$p=2n-1$, $s=n$. Hence its determinant is of the form 

\begin{equation}\label{PPPP}
a_n^{(n-m-1)(2n-1)}(\Omega _4a_{n-1}^n+\Omega _5a_n^{n-1})~~,
\end{equation}
where $\Omega _4=((1-n)\Omega _1)^{n-1}\Omega _2^n$ and 
$\Omega _5=\pm \Omega _3^{n-1}\Omega _1^n$. The polynomial 
Res$(P_{m,1},P_{m,1}')$ contains monomials $\alpha a_n^{n-1}$ and $\beta a_{n-1}^n$, 
$\alpha \neq 0\neq \beta$; this can be proved by complete analogy with 
the analogous statement of Proposition~\ref{propmonom} with $m=1$ and we 
leave the proof for the reader. Hence the polynomial (\ref{PPPP}) is not 
divisible by a power of Res$(P_{m,1},P_{m,1}')$ higher than $1$, because in this 
case it would contain at least three different monomials in $a_n$ and 
$a_{n-1}$. Thus $s_{m,1}=1$.    
\end{proof}

\begin{proof}[Proof of Lemma~\ref{lmk=1}:]
The matrix $S(P,P_*)$ has only the following nonzero entries, 
in the following positions:

$$\begin{array}{cccccccl}
1&(j,j)&,&a_1&(j,j+1)&,&a_{\ell}&(j,j+\ell )~~,\\ \\ 
a_n&(j,j+n)&,& 
&&&&j=1,\ldots ,n-m~,\\ \\ 
b_0&(\nu +n-m,\nu )&,&b_1a_1&(\nu +n-m,\nu +1)&,&&\nu =1,\ldots ,n~.
\end{array}$$
One can develop the determinant $n-m-1$ times w.r.t. the last column in which 
each time there will be a single nonzero entry $a_n$. Thus 
$\det S(P,P_*)=\pm a_n^{n-m-1}\det S^{\ddagger}$, where the first row of 
$S^{\ddagger}$ contains the entries $1$, $a_1$, $a_{\ell}$ and $a_n$ in positions 
respectively $(1,1)$, $(1,2)$, $(1,\ell +1)$ and $(1,n+1)$; its second row 
is of the form $(b_0$, $b_1a_1$, $0$, $\ldots$, $0)$ and the next rows are 
the consecutive shifts of this one by one position to the right. Developing 
of $\det S^{\ddagger}$ w.r.t. the last column yields 

$$\det S^{\ddagger}=(-1)^na_n\det [S^{\ddagger}]_{1,n+1}+b_1a_1
\det [S^{\ddagger}]_{n+1,n+1}~.$$
The matrix $[S^{\ddagger}]_{1,n+1}$ is upper-triangular, with diagonal entries 
equal to $b_0$ (hence $\det [S^{\ddagger}]_{1,n+1}=b_0^n$). The determinant of the 
matrix $S^{\ddagger \ddagger}:=[S^{\ddagger}]_{n+1,n+1}$ can be developed 
$n-\ell -1$ times 
w.r.t. its last column, where each time 
it has a single nonzero entry $b_1a_1$ in its right lower corner:

$$\det S^{\ddagger \ddagger}=(b_1a_1)^{n-\ell -1}\det S^{*\dagger}~,$$
where $S^{*\dagger}$ is $(\ell +1)\times (\ell +1)$; it is obtained by 
deleting the last $n-\ell -1$ rows and columns of $S^{\ddagger \ddagger}$. The 
determinant $\det S^{*\dagger}$ can be developed w.r.t. its last column: 

$$\det S^{*\dagger}=(-1)^{\ell}a_{\ell}\det [S^{*\dagger}]_{1,\ell +1}+b_1a_1
\det [S^{*\dagger}]_{\ell +1,\ell +1}~.$$
The matrix $[S^{*\dagger}]_{1,\ell +1}$ (resp. $[S^{*\dagger}]_{\ell +1,\ell +1}$) 
is upper-triangular, with diagonal entries equal to $b_0$, so 
its determinant equals $b_0^{\ell}$ (resp. becomes lower-triangular (after 
subtracting its second row multiplied by $1/b_1$ from its first row), 
with diagonal 
entries equal to $1-b_0/b_1$, $b_1a_1$, $\ldots$, $b_1a_1$, so its determinant 
equals $(1-b_0/b_1)(b_1a_1)^{\ell -1}$). This implies the lemma. 
\end{proof}

\section{Completion of the proof of 
Theorem~\protect\ref{maintm}\protect\label{seccompletion}}

\begin{st}\label{stA} 
If formula (\ref{formula}) is true for $n=n_0$, $k=k_0$, 
then it is true for $n=n_0+1$, $k=k_0+1$.
\end{st}

\begin{st}\label{stAA}
If formula (\ref{formula}) is true for $n=n_0$, $m=n_0-2$, $k=1$, then 
it is true for $n=n_0$, $2\leq m<n_0-2$, $k=1$.
\end {st}

\begin{proof}[Proof of Statement~\ref{stA}:]
Recall that we have shown already 
(see Remark~\ref{remintermediate}) that for 
each $n$ fixed the polynomials $\tilde{D}_{m,k}$ ($2\leq m\leq n-2$, 
$1\leq k\leq n$) are of the form $A_{m,k}B_{m,k}^{s_{m,k}}C_{m,k}^{r_{m,k}}$, 
$s_{m,k}, r_{m,k} \in \mathbb{N}$. Suppose that for $4\leq n\leq n_0$ one has 
$s_{m,k}=1$, $r_{m,k}=2$. (Using MAPLE one can obtain this result  
for $n_0=4$.) Set $P(a,x):=x^{n_0}+a_1x^{n_0-1}+\cdots +a_{n_0}$, $a:=$
$(a_1,\ldots ,a_{n_0})$ and consider the polynomials $F:=ux^{n_0+1}+P$ and 
$F_*:=b_{-1}ux^{n_0-m+1}+P_*$, $u\in (\mathbb{C},0)$, $0\neq b_{-1}\neq b_j$ for 
$0\leq j\leq n_0-m$. They are deformations 
respectively of $P$ and $P_*$. Our reasoning uses the following 

\begin{obs}
{\rm One has} 

$$\begin{array}{lclc}F&=&u(x^{n_0+1}+x^{n_0}/u+\sum _{j=0}^{n_0-1}(a_{n-j}/u)x^j)&,\\ \\ 
F_*&=&u(b_{-1}x^{n_0-m+1}+b_0x^{n_0-m}/u+
\sum _{j=0}^{n_0-1}(b_{n-j}a_{n-j}/u)x^j)&,\end{array}$$
{\rm so after the change of parameters 
$\tilde{a}_1=1/u$, $\tilde{a}_s=a_{s-1}/u$, 
$s=2$, $\ldots$, $n_0$ (which is well-defined for $u\neq 0$) and the 
shifting by $1$ of the indices of the constants $b_j$, the polynomials $F$ and 
$F_*$ (up to multiplication by $1/u$) become $P$ and $P_*$ defined for $n_0+1$ 
instead of $n_0$.}
\end{obs}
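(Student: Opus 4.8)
The plan is to verify the Observation by direct substitution and reindexing; its content is a bookkeeping identity with no hidden analytic difficulty. First I would substitute the definitions $P=x^{n_0}+a_1x^{n_0-1}+\cdots+a_{n_0}$ and $P_*=\sum_{j=0}^{n_0-m}b_ja_jx^{n_0-m-j}$ (with $a_0=1$) into $F=ux^{n_0+1}+P$ and $F_*=b_{-1}ux^{n_0-m+1}+P_*$ and factor out $u$. For $F$ this yields the first display at once: the monomial $a_sx^{n_0-s}$ of $P$ becomes $(a_s/u)x^{n_0-s}$, so the coefficient of $x^j$ for $0\le j\le n_0-1$ is $a_{n_0-j}/u$. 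For $F_*$ the leading term $b_{-1}ux^{n_0-m+1}$ keeps coefficient $b_{-1}$ after division by $u$, the subleading $b_0x^{n_0-m}$ contributes $b_0/u$, and each lower monomial $b_sa_sx^{n_0-m-s}$ of $P_*$ contributes $(b_sa_s/u)x^{n_0-m-s}$; this is the second display.

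Next I would carry out the change of parameters. Setting $\tilde a_1=1/u$ and $\tilde a_s=a_{s-1}/u$ (uniformly for $1\le s\le n_0+1$, since $a_0=1$ makes $\tilde a_1=a_0/u$) turns $F/u$ into $x^{n_0+1}+\tilde a_1x^{n_0}+\cdots+\tilde a_{n_0+1}$, the generic monic polynomial of degree $n_0+1$. For $F_*/u$ I would introduce the shifted constants $\tilde b_j:=b_{j-1}$ and verify term by term that $F_*/u=\sum_{j=0}^{n_0+1-m}\tilde b_j\tilde a_jx^{(n_0+1)-m-j}$: the pair $(\tilde b_0,\tilde a_0)=(b_{-1},1)$ reproduces the leading term, while for $j\ge 1$ one has $\tilde b_j\tilde a_j=b_{j-1}(a_{j-1}/u)$, which is exactly the coefficient sitting at $x^{(n_0+1)-m-j}=x^{n_0-m-(j-1)}$ in the $F_*$ display. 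Hence $F/u$ and $F_*/u$ are $P$ and $P_*$ for degree $n_0+1$.

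Finally I would confirm that the genericity hypothesis is preserved: the new constants $\{\tilde b_0,\ldots,\tilde b_{n_0+1-m}\}=\{b_{-1},b_0,\ldots,b_{n_0-m}\}$ are nonzero and pairwise distinct, since $b_0,\ldots,b_{n_0-m}$ are so by hypothesis and $b_{-1}$ was chosen with $0\neq b_{-1}\neq b_j$. I expect no genuine obstacle; the one point requiring care is index consistency — in particular that the range for $\tilde a_s$ must be read as $s=2,\ldots,n_0+1$ so as to capture the constant term $a_{n_0}/u=\tilde a_{n_0+1}$, and that the reindexing $\tilde b_j=b_{j-1}$ be applied consistently to both the display for $F_*$ and to the genericity conditions. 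This identity is precisely what licenses the induction on $n$ used in Statement~\ref{stA}.
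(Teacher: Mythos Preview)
Your verification is correct and matches the paper's intent: the Observation is stated without separate proof in the paper (it is treated as an immediate bookkeeping identity inside the proof of Statement~\ref{stA}), and your direct substitution and reindexing is exactly the intended justification. Your flagging of the index range for $\tilde a_s$ (it should run to $n_0+1$ to capture $\tilde a_{n_0+1}=a_{n_0}/u$) and of the need to read the upper summation index in the $F_*$ display consistently with $\deg P_*=n_0-m$ is appropriate; these are minor typographical slips in the paper that do not affect the argument.
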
  

\begin{lm}\label{lmH}
The zero set of {\rm Res}$(F,F_*)$ for $u\neq 0$ 
is defined by 
an equation of the form $\tilde{D}_m+uH/d=0$, where 
$H\in \mathbb{C}[u,a]$ and $d\neq 0$. 
\end{lm}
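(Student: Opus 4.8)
\textbf{Proof plan for Lemma~\ref{lmH}.}
The plan is to extract from $\mathrm{Res}(F,F_*)$ its ``lowest-order in $u$'' part and to identify it, up to a nonzero constant, with $\tilde{D}_m=\mathrm{Res}(P,P_*)$. First I would write out the Sylvester matrix $S(F,F_*)$ explicitly. Since $F=ux^{n_0+1}+P$ has degree $n_0+1$ and $F_*=b_{-1}ux^{n_0-m+1}+P_*$ has degree $n_0-m+1$, the matrix $S(F,F_*)$ is $(2(n_0+1)-m)\times(2(n_0+1)-m)$, i.e.\ exactly one unit larger in each dimension than $S(P,P_*)$; its rows are the shifts of $(u,1,a_1,\ldots,a_{n_0},0,\ldots,0)$ and $(b_{-1}u,b_0,b_1a_1,\ldots,b_{n_0-m}a_{n_0-m},0,\ldots,0)$. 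The key observation is that $\mathrm{Res}(F,F_*)=\det S(F,F_*)$ is a polynomial in $u$ whose constant term (the value at $u=0$) I must compute; once I know this constant term equals $d\,\tilde{D}_m$ for some nonzero constant $d$, the lemma follows by writing $\det S(F,F_*)=d\,\tilde{D}_m+u\,H$ with $H\in\mathbb{C}[u,a]$ and dividing through by $d$.

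The main step is therefore the evaluation at $u=0$. Setting $u=0$ turns the first row of $S(F,F_*)$ into $(0,1,a_1,\ldots,a_{n_0},0,\ldots,0)$ and the $(n_0-m+2)$nd row into $(0,b_0,b_1a_1,\ldots,b_{n_0-m}a_{n_0-m},0,\ldots,0)$; so the entire first column of $S(F,F_*)|_{u=0}$ vanishes except possibly for entries inherited from shifts — but in fact, by the staircase structure, the only nonzero entry of the first column of $S(F,F_*)|_{u=0}$ is the $1$ coming from the second ``$P$-type'' row (the shift of the first row by one position), located in position $(2,1)$. Developing the determinant along the first column then gives $\det S(F,F_*)|_{u=0}=\pm\det[S(F,F_*)|_{u=0}]_{2,1}$, and one checks directly that $[S(F,F_*)|_{u=0}]_{2,1}$ is precisely $S(P,P_*)$ (the leading $1$'s and $b_0$'s of $F$ and $F_*$ at $u=0$ play the role of the leading coefficients of $P$ and $P_*$, and deleting row $2$ and column $1$ removes exactly the duplicated staircase step introduced by raising the degrees by one). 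Hence $\det S(F,F_*)|_{u=0}=\pm\tilde{D}_m$, so $d=\pm 1$ works.

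The hard part — though it is really just bookkeeping — is being careful about which row is ``the second $P$-type row'' versus ``the first $F_*$-type row'' when $u=0$, since several rows acquire a leading zero simultaneously and one must make sure that after deleting row $2$ and column $1$ no further degeneration occurs; a clean way to see this is to note that $S(F,F_*)$, viewed over $\mathbb{C}[u]$, specializes at $u=0$ to a block-triangular-in-disguise matrix whose Schur-type reduction along the first column is exactly the passage from degrees $(n_0+1,n_0-m+1)$ to degrees $(n_0,n_0-m)$ in the Sylvester construction. Once that is established, the remaining claim $H\in\mathbb{C}[u,a]$ and $d\neq 0$ is immediate from the fact that $\det S(F,F_*)$ is a polynomial in all its entries, hence in $u$ and $a$, and the coefficient of $u^0$ is the nonzero polynomial $\pm\tilde{D}_m$. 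I would also remark, using the Observation, that $\mathrm{Res}(F,F_*)$ for $u\neq 0$ coincides up to the factor $u^{(\text{something})}$ with $\tilde{D}_m$ computed for $n_0+1$ in the variables $\tilde a$, which is why dividing by $d$ and keeping the $uH$ correction term captures exactly the deformation needed in the induction step of Statement~\ref{stA}.
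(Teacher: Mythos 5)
Your overall strategy (isolate the lowest-order part of $\mathrm{Res}(F,F_*)$ in $u$ and identify it with $\tilde D_m$) is the right instinct, but the central computation is wrong. At $u=0$ the \emph{entire} first column of $S(F,F_*)$ vanishes: the only entries of that column that are ever nonzero are $u$ in position $(1,1)$ and $b_{-1}u$ in position $(n_0-m+2,1)$, because every subsequent row is the previous one shifted right with a $0$ prepended — so the ``second $P$-type row'' begins with $0$, not with $1$, and there is no nonzero entry in position $(2,1)$. Consequently $\det S(F,F_*)|_{u=0}=0$, not $\pm\tilde D_m$, and your identification of the constant term in $u$ with $\tilde D_m$ fails. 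A dimension count already signals the problem: deleting one row and one column of the $(2n_0-m+2)\times(2n_0-m+2)$ matrix $S(F,F_*)$ leaves a matrix of size $2n_0-m+1$, whereas $S(P,P_*)$ has size $2n_0-m$; when $u=0$ both degrees drop by one, so the Sylvester matrix must shrink by two in each dimension, not by one.

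The missing idea is that $u$ divides $\mathrm{Res}(F,F_*)$ exactly through the first column, which equals $u$ times the constant column having $1$ in row $1$ and $b_{-1}$ in row $n_0-m+2$ and zeros elsewhere. The paper first permutes rows so that these two rows become rows $1$ and $2$, then divides the first column by $u$ (which does not affect the zero set for $u\neq 0$); at $u=0$ the resulting matrix is block-upper-triangular with diagonal blocks $\left(\begin{array}{cc}1&1\\ b_{-1}&b_0\end{array}\right)$ and $S(P,P_*)$, whence $\det T^{\dagger}=d\,\tilde D_m+uH$ with $d=b_0-b_{-1}$. Note that $d$ is not $\pm1$: its nonvanishing rests on the genericity hypothesis $b_{-1}\neq b_0$, which your argument never invokes — another sign that the normalization by $u$ (and the resulting $2\times2$ block) is the essential step your proof skips. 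The statement only concerns the zero set for $u\neq 0$ precisely because one must discard this overall factor of $u$ before specializing.
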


\begin{proof}
%[Proof of Lemma~\ref{lmH}:]
Consider the 
$(2n_0-m+2)\times (2n_0-m+2)$-Sylvester matrix 
$\tilde{S}:=S(F,F_*)$. Permute the rows of $\tilde{S}$ as follows: 
place the $(n_0-m+2)$nd row in second position while shifting 
the ones with indices $2$, $\ldots$, $n_0-m+1$ by one position backward. 
This preserves up to a sign the determinant and yields a  
matrix $T$ which we decompose in four blocks the diagonal ones being of size 
$2\times 2$ (upper left, denoted by $T^*$) and 
$(2n_0-m)\times (2n_0-m)$ (lower right, denoted by $T^{**}$); the left lower 
block is denoted by $T^0$ and the right upper by $T^1$. 
An easy check shows that 

$$T^*=\left( \begin{array}{cc}u&1\\b_{-1}u&b_0
\end{array}\right) ~~,~~T^{**}|_{u=0}=S(P,P_*)~~,$$  
and that the only nonzero entries of the left lower block $T^0$ are 
$u$ and $b_{-1}u$, in positions 
$(3,2)$ and $(n_0-m+3,2)$ respectively.

Divide the first 
column of $T$ by $u$ (we denote the thus obtained matrix by $T^{\dagger}$). 
This does not change the zero set of $\det T$ for 
$u\neq 0$. For $u=0$ the matrix $T^{\dagger}$ is block-upper-triangular, 
with diagonal blocks equal to 
$\left( \begin{array}{cc}1&1\\b_{-1}&b_0\end{array}\right)$ and $S(P,P_*)$. 
Hence $\det T^{\dagger}=d\det S(P,P_*)+uH(u,a)$, 
$d:=\det T^*|_{u=0}=b_0-b_{-1}\neq 0$, 
$H\in \mathbb{C}[u,a]$. 
Thus the zero set of Res$(F,F_*)$ for $u\neq 0$ 
sufficiently small is defined by 
the equation $\tilde{D}_m+uH/d=0$.
\end{proof} 

For $u\neq 0$ (resp. for $u=0$) the quantity $\det T^{\dagger}$ 
is a degree $n_0-m+1$ (resp. $n_0-m$) polynomial in 
$a_k$ for $k=n_0-m+1$, $\ldots$, $n_0$, and a degree $n_0+1$ (resp. 
$n_0$) polynomial in $a_k$ for $k=1$, $\ldots$, $n_0-m$, see 
Proposition~\ref{propmonom}. Hence for each $k=1$, $\ldots$, $n_0$ 
there is one simple root $-1/w_k(u,a)$ of Res$(F,F_*)$ 
that tends to infinity as $u\rightarrow 0$. Thus one can set 
Res$(F,F_*)=(1+w_k(u,a)a_k)\tilde{D}^*_m$, where 
$\tilde{D}^*_m|_{u=0}\equiv \tilde{D}_m$ and 
$\deg _{a_k}\tilde{D}^*_m=n_0-m$ (resp. $n_0$) 
for $k=n_0-m+1$, $\ldots$, $n_0$ (resp. for $k=1$, $\ldots$, $n_0-m$). 

\begin{lm}\label{lmEm}
Set $E_m:=${\rm Res}$(F,F_*)$ and 
$\tilde{D}^*_{m,k}:=${\rm Res}$(E_m,\partial E_m/\partial a_k,
a_k)$. Then for $u\neq 0$ one has   
$\tilde{D}^*_{m,k}=\Omega ^{\flat \flat}(a_{n_0}^{2(n_0-m-k)}\tilde{D}_{m,k}
+uH_{m,k}(u,a))$, where $H_{m,k}\in \mathbb{C}[u,a]$.
\end{lm}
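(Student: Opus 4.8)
The plan is to track how the two Sylvester-type resultants change under the perturbation $u\neq 0$, using the factorization $E_m=(1+w_k(u,a)a_k)\tilde D_m^*$ established just above. First I would record that $\tilde D_m^*|_{u=0}\equiv\tilde D_m$ and that, since $1+w_k a_k$ is a degree-one polynomial in $a_k$ whose constant term $1$ is a unit, the resultant $\tilde D_{m,k}^*=\mathrm{Res}(E_m,\partial E_m/\partial a_k,a_k)$ splits multiplicatively in the familiar way. Concretely, write $E_m=(1+w_k a_k)G$ with $G:=\tilde D_m^*$; then $\partial E_m/\partial a_k=(1+w_k a_k)G'+w_kG$, where $G'=\partial G/\partial a_k$. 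Using the standard behaviour of resultants under products and the known specialization formula $\mathrm{Res}(\alpha+\beta a_k,\,R,\,a_k)=\beta^{\deg_{a_k}R}R(-\alpha/\beta)$, I would compute $\tilde D_{m,k}^*$ up to a nonzero constant and up to powers of $u$, isolating the leading term as $u\to 0$.

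The key algebraic step is the degree count. For $u=0$ the polynomial $\tilde D_m^*$ has degree $n_0$ (resp. $n_0-m$) in $a_k$ for $k\le n_0-m$ (resp. $k\ge n_0-m+1$), exactly as in Proposition~\ref{propmonom}; this is why $\tilde D_{m,k}^*|_{u=0}$ should reproduce $\tilde D_{m,k}$. The extra factor $a_{n_0}^{2(n_0-m-k)}$ for $k\le n_0-m$ comes from the factor $A_{m,k}=a_{n_0}^{n_0-m-k}$ in formula (\ref{formula}) appearing twice: once from $\tilde D_m^*$ itself restricted to $u=0$ (which contributes $A_{m,k}$ through $\tilde D_{m,k}$), and once more from the resultant bookkeeping — more precisely, the leading coefficient in $a_k$ of $\tilde D_m^*|_{u=0}$ is (up to a constant) a power of $a_{n_0}$, and that leading coefficient enters $\mathrm{Res}(\tilde D_m^*,\partial\tilde D_m^*/\partial a_k,a_k)$ raised to a power equal to the drop in $a_k$-degree between the two rows of the Sylvester matrix. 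I would carry out this count carefully: when $u=0$ the monomial $M_k=\pm b_k^{n_0}a_k^{n_0}(1-b_0/b_k)^k a_{n_0}^{n_0-m-k}$ (resp. $N_s$) shows the top $a_k$-coefficient is a monomial in $a_{n_0}$ (resp. $a_{n_0-m}$), and the factor $a_{n_0}^{2(n_0-m-k)}$ (resp. the analogous absence of such a factor, since $n_0-k$ appears, not $n_0-m-k$) emerges precisely as that coefficient squared divided by the $A_{m,k}$ already inside $\tilde D_{m,k}$.

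The remainder term is handled by continuity of the resultant: $E_m$, $\partial E_m/\partial a_k$, and their Sylvester matrix all have entries polynomial in $u$, so $\tilde D_{m,k}^*=\det S(E_m,\partial E_m/\partial a_k,a_k)$ is a polynomial in $u$ and $a$, and its value at $u=0$ is $\Omega^{\flat\flat}a_{n_0}^{2(n_0-m-k)}\tilde D_{m,k}$ by the computation above. Writing $\tilde D_{m,k}^*-\Omega^{\flat\flat}a_{n_0}^{2(n_0-m-k)}\tilde D_{m,k}=uH_{m,k}(u,a)$ with $H_{m,k}\in\mathbb C[u,a]$ then finishes the proof; one must only check that the overall constant factor absorbed into $\Omega^{\flat\flat}$ is genuinely nonzero, which follows because $d=b_0-b_{-1}\neq0$ and all the leading $a_k$-coefficients used are nonzero by the genericity hypothesis $0\neq b_i\neq b_j$. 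The main obstacle I anticipate is the exponent bookkeeping in the middle step: correctly matching the power of $a_{n_0}$ coming out of the resultant of the perturbed polynomials against the factor $A_{m,k}$ in (\ref{formula}), and verifying that the degree drop in $a_k$ caused by the factor $(1+w_k a_k)$ being split off is exactly what produces the squared exponent $2(n_0-m-k)$ rather than $n_0-m-k$ or something else.
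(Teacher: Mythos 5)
Your route via the factorization $E_m=(1+w_ka_k)\tilde D_m^*$ and multiplicativity of resultants is different from the paper's (which works directly with the Sylvester matrix $S(E_m,\partial E_m/\partial a_k,a_k)$, permutes two rows, block-decomposes so that the lower-right block specializes at $u=0$ to $S(\tilde D_m,\partial\tilde D_m/\partial a_k,a_k)$, and reads off a $2\times2$ leading block), and it could be made to work — but as written it contains a genuine error at the decisive step. You claim that the value of $\tilde D^*_{m,k}$ at $u=0$ is $\Omega^{\flat\flat}a_{n_0}^{2(n_0-m-k)}\tilde D_{m,k}$ and that the remainder is then $uH_{m,k}$ by polynomiality. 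In fact $\tilde D^*_{m,k}|_{u=0}\equiv 0$: the coefficient of $a_k^{n_0+1}$ in $E_m$ is $u^{k+1}\Omega^{\natural}a_{n_0}^{n_0-m-k}$, so every entry of the first column of the $(2n_0+1)\times(2n_0+1)$ Sylvester matrix is divisible by $u^{k+1}$ and the determinant vanishes identically at $u=0$. The paper's proof handles this by dividing the first column by $u^{k+1}$ \emph{before} specializing; the displayed identity is only valid after that normalization (which is why the lemma is stated for $u\neq 0$, and why the conclusion concerns zero sets). Your own decomposition, carried out honestly, exposes the same phenomenon: the factor $\mathrm{Res}(1+w_ka_k,\partial E_m/\partial a_k,a_k)=w_k^{n_0+1}G(-1/w_k)$ with $w_k=u^{k+1}\Omega^{\natural}a_{n_0}^{n_0-m-k}/\mathrm{lc}_{a_k}(G)\to 0$, so the whole product tends to $0$ rather than to the asserted limit; you must first extract this factor $w_k$ (i.e.\ $u^{k+1}$ up to a unit) and only then pass to $u=0$.

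Two further points. First, your accounting for the exponent $2(n_0-m-k)$ is internally inconsistent (``$A_{m,k}$ appearing twice,'' ``the coefficient squared divided by the $A_{m,k}$ already inside $\tilde D_{m,k}$''). The correct and much simpler explanation, visible in both your decomposition and the paper's $2\times 2$ block, is that the factor is the product of the two leading $a_k$-coefficients $\Omega^{\natural}a_{n_0}^{n_0-m-k}$ (of $E_m/u^{k+1}$ at $u=0$) and $\Omega^{\flat}a_{n_0}^{n_0-m-k}$ (of $\tilde D_m$), both supplied by the unique top-degree monomials of Proposition~\ref{propmonom}; it has nothing to do with the factor $A_{m,k}$ inside formula (\ref{formula}). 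Second, $w_k$ and the coefficients of $\tilde D_m^*$ are algebraic, not polynomial, in $(u,a^k)$, so the resultant manipulations must be performed over the algebraic closure of the fraction field and you must then justify that the end result lies in $\mathbb{C}[u,a]$; and in the range $n_0-m+1\leq k\leq n_0$ the relevant leading coefficients are powers of $a_{n_0-m}$, a case your plan does not address. With the $u^{k+1}$ extraction and the corrected leading-coefficient bookkeeping inserted, your approach does reproduce the lemma, and is arguably cleaner than the row-permutation argument; without them it does not close.
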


\begin{rem}\label{remuv}
{\rm One can set $u:=a_{n_0}^{2(n_0-m-k)}v$ to obtain the equality} 
$$\tilde{D}^*_{m,k}=\Omega ^{\flat \flat}a_{n_0}^{2(n_0-m-k)}(\tilde{D}_{m,k}
+vH_{m,k}(a_{n_0}^{2(n_0-m-k)}v,a))~.$$ 
{\rm Now in a neighbourhood of each $a_{n_0}\neq 0$ 
fixed the zero set of $\tilde{D}^*_{m,k}$ is defined by the equation 
$\tilde{D}_{m,k}+vH_{m,k}(a_{n_0}^{2(n_0-m-k)}v,a)=0$, i.e. by deforming the equation 
$\tilde{D}_{m,k}=0$.}
\end{rem}

\begin{proof}[Proof of Lemma~\ref{lmEm}:] 
Indeed, Proposition~\ref{propmonom} implies that $\tilde{D}_m$ 
contains a monomial $\Omega ^{\flat}a_k^{n_0}a_{n_0}^{n_0-m-k}$, $1\leq k\leq n_0-m$ 
(resp. $\Omega _{\flat}a_k^{n_0-m}a_{n_0-m}^{n_0-k}$, $n_0-m+1\leq k\leq n_0$) 
and this is the only 
monomial containing $a_k^{n_0}$ (resp. $a_k^{n_0-m}$). Similarly, $E_m$ contains 
a monomial $I:=u^{k+1}\Omega ^{\natural}a_k^{n_0+1}a_{n_0}^{n_0-m-k}$, $1\leq k\leq n_0-m$ 
(resp. $J:=u^{k+1}\Omega _{\natural}a_k^{n_0-m+1}a_{n_0-m}^{n_0-k}$, 
$n_0-m+1\leq k\leq n_0$) 
and this is the only monomial containing $a_k^{n_0+1}$ (resp. $a_k^{n_0-m+1}$). 
(The monomial $I$ is obtained as follows: one subtracts for $\nu =1$, $\ldots$, $n_0-m+1$ 
the $(\nu +n_0-m+1)$st row multiplied by $1/b_k$ from the $\nu$th one to make disappear 
the terms $a_k$ in the first $n_0-m+1$ rows. The monomial $I$ is the product of the terms 
$b_ka_k$ in the last $n_0+1$ rows, the terms $(1-1/b_k)u$ in the first $k+1$ rows and the terms 
$a_{n_0}$ in the next $n_0-m-k$ rows. The monomial $J$ is obtained in a similar way. One has 
to assume that QHW$(u)=-1$.) 
Knowing that $\deg _{a_k}E_m=n_0+1$ (resp. $\deg _{a_k}E_m=n_0-m+1$) for $u\neq 0$ 
and that $\deg _{a_k}\tilde{D}_m=n_0$ (resp. $\deg _{a_k}\tilde{D}_m=n_0-m$) 
one concludes that 

$$\begin{array}{cccccccl}
&E_m&=&u^{k+1}\Omega ^{\natural}a_k^{n_0+1}a_{n_0}^{n_0-m-k}&+&
\Omega ^{\flat}a_k^{n_0}a_{n_0}^{n_0-m-k}&+&uE^*(u,a)\\ \\ 
{\rm (resp.}
&E_m&=&u^{k+1}\Omega _{\natural}a_k^{n_0-m+1}a_{n_0-m}^{n_0-k}&+&
\Omega _{\flat}a_k^{n_0-m}a_{n_0-m}^{n_0-k}&+&uE^{**}(u,a)~~{\rm )~,}\end{array}$$
where $E^*$, $E^{**}\in \mathbb{C}[u,a]$, $\deg _{a_k}E^*\leq n_0$, 
$\deg _{a_k}E^{**}\leq n_0-m$. The Sylvester matrix 
$S(E_m$, $\partial E_m/\partial a_k,a_k)$ is $(2n_0+1)\times (2n_0+1)$ (resp. 
$(2n_0-2m+1)\times (2n_0-2m+1)$). We permute its rows by placing the 
$(n_0+1)$st (resp. $(n_0-m+1)$st) row in second position while shifting by 
one position backward the second, third, $\ldots$, $n_0$th (resp. $(n_0-m)$th) 
rows. The new matrix $T^{\flat}$ can be block-decomposed, with diagonal blocks 
$T^{u\ell}$ ($2\times 2$, upper left) and $T^{\ell r}$; the other two blocks are 
denoted by $T^{ur}$ and $T^{\ell \ell}$. Hence

$$\begin{array}{ccccl}
&T^{u\ell}&=&\left( \begin{array}{cc}
u^{k+1}\Omega ^{\natural}a_{n_0}^{n_0-m-k}&
\Omega ^{\flat}a_{n_0}^{n_0-m-k}+uX^1(u,a)\\ \\  
(n_0+1)u^{k+1}\Omega ^{\natural}a_{n_0}^{n_0-m-k}&
n_0\Omega ^{\flat}a_{n_0}^{n_0-m-k}+uX^2(u,a)\end{array}\right)&,\\ \\ 
{\rm (resp.}&
T^{u\ell}&=&\left( \begin{array}{cc}
u^{k+1}\Omega _{\natural}a_{n_0}^{n_0-m-k}&
\Omega _{\flat}a_{n_0}^{n_0-m-k}+uX^3(u,a)\\ \\ 
(n_0-m+1)u^{k+1}\Omega _{\natural}a_{n_0}^{n_0-m-k}&
(n_0-m)\Omega _{\flat}a_{n_0}^{n_0-m-k}+uX^4(u,a)\end{array}\right) &{\rm )~,}
\end{array}$$
$X^i\in \mathbb{C}[u,a]$. One has 
$T^{\ell r}|_{u=0}=S(\tilde{D}_m,\partial \tilde{D}_m/\partial a_k,a_k)$. The block 
$T^{\ell \ell}$ has just two nonzero entries, in its second column, and 
$T^{\ell \ell}|_{u=0}=0$. The first of these entries is in position 
$(3,2)$ and equals $u^{k+1}\Omega ^{\natural}a_{n_0}^{n_0-m-k}$ (resp. 
$u^{k+1}\Omega _{\natural}a_{n_0}^{n_0-m-k}$). The second of them is in position 
$(n_0+2,2)$ (resp. $(n_0-m+2,2)$) and equals 
$(n_0+1)u^{k+1}\Omega ^{\natural}a_{n_0}^{n_0-m-k}$ (resp. 
$(n_0-m+1)u^{k+1}\Omega _{\natural}a_{n_0}^{n_0-m-k}$). 

Thus for $u=0\neq a_{n_0}$ the zero set of $\tilde{D}_{m,k}^*$ is the one of 
$\tilde{D}_{m,k}$. For $u\neq 0$ small enough this set does not change if one 
divides the first column of the matrix $T^{\flat}$ by $u^{k+1}$. We denote the new 
matrix by $T^{\flat *}$. Obviously 
$\det T^{\flat *}=-\Omega ^{\natural}\Omega ^{\flat}(a_{n_0}^{2(n_0-m-k)}
\tilde{D}_{m,k}+uH_{m,k})$ (resp. 
$\det T^{\flat *}=-\Omega _{\natural}\Omega _{\flat}(a_{n_0}^{2(n_0-m-k)}
\tilde{D}_{m,k}+uH_{m,k})$) for a suitably defined polynomial $H_{m,k}$ which 
proves the lemma.
\end{proof}

Further to distinguish between the sets $\Theta$ and $\tilde{M}$ 
(see Definition~\ref{ThetaM}) 
defined for the polynomials $P$ or $F$ we write $\Theta _P$ and $\tilde{M}_P$ 
or $\Theta _F$ and $\tilde{M}_F$. 
Consider a point $A\in \Theta _P$ and a germ 
$\mathcal{G}$ of an affine space of dimension $2$ which intersects $\Theta _P$ 
transversally at $A$. Hence there exists a compact 
neighbourhood $\mathcal{N}$ of 
$A$ in the space $\mathcal{A}$ such that the parallel translates of 
$\mathcal{G}$ which intersect $\Theta _P$ at points of $\mathcal{N}$, 
intersect $\Theta _P$ transversally at these points. We assume that the value 
of $|a_{n_0}|$ remains $\geq \rho$ in $\mathcal{N}$ for some $\rho >0$. 
The restrictions of $\tilde{D}_{m,k}$ to each of 
these translates are smooth analytic functions each of which has one 
simple zero at its 
intersection point with $\Theta _P$; this follows from the factor $B_{m,k}$ 
participating in power $1$ in formula (\ref{formula}) for $n=n_0$. 
Hence for all  
$u\in \mathbb{C}$ with $0<|u|\ll \rho$ the restriction of $\tilde{D}^*_{m,k}$ 
to these translates are smooth analytic functions having simple zeros at the 
intersection points of the translates with $\Theta _P$. 

But this means that the power of the factor $B_{m,k}$ in formula (\ref{formula}) 
applied to the polynomial $F$ is equal to $1$ on the intersection of 
$\Theta _F$ with some open ball of dimension $n_0+1$ centered at $(0,A)$ in 
the space of the variables $(u,a)$. 
Hence this power equals $1$ on some Zariski open dense subset $\Theta ^0$ 
of $\Theta _F$ 
(if its complement $\Theta _F\backslash \Theta ^0$ is nonempty, then on 
$\Theta ^0$ this power might be $>1$). 
Thus the equality $s_{m,k}=1$ is justified for $n=n_0+1$, $2\leq k\leq n_0+1$ 
(because it is the coefficient of $x^{n_0-k}$, not of $x^{n_0+1-k}$ of $F$,  
that equals $a_k$). 

Now we adapt the above reasoning to the situation, where instead of a point 
$A \in \Theta _P$ one considers a point $A\in \tilde{M}_P$. Each of the 
translates of $\mathcal{G}$ intersects $\tilde{M}_P$ transversally, at just 
one point. The restriction of $\tilde{D}_{m,k}$ to the translate is a 
smooth analytic function having a double zero, so a priori the restriction of 
$\tilde{D}^*_{m,k}$ to it has either one double or two simple zeros. (Under an  
analytic deformation a double zero either remains such or splits into two 
simple zeros.) 
However two simple zeros is impossible because these zeros would be two 
points of $\tilde{M}_P$ whereas the translate contains just one point. Thus 
the power $2$ of the factor $C_{m,k}$ is justified for some 
Zariski open dense subset of $\tilde{M}_F$. Once again, 
this is sufficient to claim that 
formula (\ref{formula}) is valid for $n=n_0+1$ and for $2\leq k\leq n_0+1$. 

\end{proof}

\begin{proof}[Proof of Statement~\ref{stAA}:]
Recall that by Remark~\ref{remintermediate} we have to show that for $n=n_0$ 
one has $s_{m,k}=1$, $r_{m,k}=2$. The first of these equalities was proved in 
Section~\ref{seck=1} (see Proposition~\ref{propk=1}), so there remains to prove 
the second one. 

As in the proof of Statement~\ref{stA} we set 
$P(a,x):=x^{n_0}+a_1x^{n_0-1}+\cdots +a_{n_0}$, $a:=$
$(a_1,\ldots ,a_{n_0})$. We define the polynomial $P_*:=x^2+b_1a_1x+b_2a_2$ to 
correspond to the case $m=n_0-2$ 
(i.e. $b_k\neq 0$, $1$, $b_{3-k}$ for $k=1$, $2$). 
For $m=n_0-2$ Theorem~\ref{maintm} 
is proved in Section~\ref{secmn-2}, so we assume that $m<n_0-2$ and we set 
$G:=x^{n_0-m-2}P_*+u(b_3a_3x^{n_0-m-3}+\cdots +b_{n_0-m}a_{n_0-m})$, where 
$u\in (\mathbb{C},0)$ and for $i,j\geq 3$, $i\neq j$, one has 
$0\neq b_i\neq b_j\neq 0$. 
Denote by $G^{\sharp}$ the $(2n_0-m)\times (2n_0-m)$-matrix 
$S(P,G)$.

\begin{lm}\label{lmuu}
One has $\det G^{\sharp}|_{u=0}=a_{n_0}^{n_0-m-2}\det S(P,P_*)=
a_{n_0}^{n_0-m-2}\tilde{D}_2$. 
Hence $\tilde{G}:=\det G^{\sharp}=a_{n_0}^{n_0-m-2}\tilde{D}_2+uH^{\sharp}(u,a)$, 
$H^{\sharp}\in \mathbb{C}[u,a]$. 
\end{lm}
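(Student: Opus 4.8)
The plan is to compute $\det G^{\sharp}|_{u=0}$ directly by exploiting the block structure of the Sylvester matrix $S(P,G)$ and the fact that at $u=0$ the polynomial $G$ degenerates to $x^{n_0-m-2}P_*$. First I would write out explicitly which rows of $G^{\sharp}=S(P,G)$ depend on $u$: these are exactly the rows coming from the coefficients of $G$, i.e. the last $n_0$ rows of the $(2n_0-m)\times(2n_0-m)$ matrix, and in each of them the entries $ub_3a_3,\ldots,ub_{n_0-m}a_{n_0-m}$ carry the factor $u$ while the leading entries $1$ (coefficient of $x^{n_0-m-2}\cdot x^2$), $b_1a_1$, $b_2a_2$ do not. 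Setting $u=0$ kills all the deformation terms, so the last $n_0$ rows of $G^{\sharp}|_{u=0}$ are precisely the rows $(\,\underbrace{0,\ldots,0}_{n_0-m-2},1,b_1a_1,b_2a_2,0,\ldots,0\,)$ and its successive right shifts.

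The key observation is then that $G^{\sharp}|_{u=0}$ is, up to a permutation of columns (or after developing along suitable columns), block-triangular with one block equal to $S(P,P_*)$ and a complementary triangular block of size $(n_0-m-2)\times(n_0-m-2)$ whose diagonal entries are all equal to $a_{n_0}$ (the constant term of $P$), contributing the factor $a_{n_0}^{n_0-m-2}$. Concretely: the first $n_0-m-2$ columns of $G^{\sharp}|_{u=0}$ contain entries from the rows of $P$ only in the top-left $(n_0-m-2)\times(n_0-m-2)$ corner in a triangular pattern, and I would develop $\det G^{\sharp}|_{u=0}$ successively along these columns — each time there is a single nonzero entry to use, and after $n_0-m-2$ such steps one is left exactly with the Sylvester matrix $S(P,P_*)$. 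This is the same kind of repeated Laplace expansion used repeatedly in the proof of Proposition~\ref{Amkprop} (e.g. developing $\det S(P,P_*)$ a fixed number of times with respect to its last column), so the bookkeeping is routine. Alternatively one can note that $\mathrm{Res}(P, x^{n_0-m-2}P_*) = \mathrm{Res}(P, x^{n_0-m-2})\cdot\mathrm{Res}(P,P_*) = P(0)^{n_0-m-2}\cdot\mathrm{Res}(P,P_*) = a_{n_0}^{n_0-m-2}\tilde{D}_2$ by multiplicativity of the resultant, which gives the equality immediately.

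The second assertion is then a one-line consequence: $\tilde{G}=\det G^{\sharp}$ is a polynomial in $u$ and $a$ (the entries of $G^{\sharp}$ are polynomials in $u,a$), so $\tilde{G}=\tilde{G}|_{u=0}+u H^{\sharp}(u,a)$ for some $H^{\sharp}\in\mathbb{C}[u,a]$, and by the first part $\tilde{G}|_{u=0}=a_{n_0}^{n_0-m-2}\tilde{D}_2$. I do not expect any real obstacle here; the only point requiring a little care is making sure the row-permutation sign and the order in which one expands the determinant are tracked correctly so that the factor comes out as $+a_{n_0}^{n_0-m-2}$ and not with an extraneous sign — but since the multiplicativity-of-resultants argument sidesteps the signs entirely, that is the cleanest route to present.
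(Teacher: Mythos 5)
Your proposal is correct, but only because of the second argument you offer: the multiplicativity of the resultant, $\mathrm{Res}(P,x^{n_0-m-2}P_*)=\mathrm{Res}(P,x)^{n_0-m-2}\,\mathrm{Res}(P,P_*)$ together with $\mathrm{Res}(P,x)=\pm P(0)=\pm a_{n_0}$, does give the first equality of the lemma in one line (note that strictly $\mathrm{Res}(P,x)=(-1)^{n_0}a_{n_0}$, so a sign $(-1)^{n_0(n_0-m-2)}$ appears; the paper systematically ignores such nonzero constants, so this is immaterial, but your claim that multiplicativity ``sidesteps the signs entirely'' is not quite accurate). This route is genuinely different from, and cleaner than, the paper's, which performs $n_0-m-2$ successive Laplace expansions of $\det G^{\sharp}|_{u=0}$ with respect to the \emph{last} column, where at $u=0$ the unique nonzero entry each time is the $a_{n_0}$ coming from a row of $P$; what the resultant identity buys is that one need not inspect the matrix at all, while the paper's expansion is self-contained and stays within the elementary toolkit used everywhere else (Lemma~\ref{lm2diag}, Proposition~\ref{Amkprop}). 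Your first, matrix-theoretic argument, however, is wrong as concretely described: in $S(P,G)$ the $\nu$th row coming from $G$ begins with the leading coefficient $1$ of $G$ in column $\nu$ (followed by $b_1a_1$, $b_2a_2$, then the $u$-terms), not with $n_0-m-2$ leading zeros, so the first $n_0-m-2$ columns contain the leading $1$'s of both $P$ and $G$ and an expansion along them never meets a single nonzero entry; the triangular block with diagonal entries $a_{n_0}$ sits in the \emph{last} $n_0-m-2$ columns (rows of $P$), which is where the expansion must be carried out. The deduction of the second assertion, $\tilde{G}=\tilde{G}|_{u=0}+uH^{\sharp}(u,a)$, is correct and is exactly how the paper concludes.
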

 
\begin{proof}
All nonzero entries of the matrix $G^{\sharp}$ in the intersection of 
its last $n_0-m-2$ columns 
and rows are $0$ for $u=0$. One can develop $n_0-m-2$ times 
$\det G^{\sharp}|_{u=0}$ w.r.t. its last column; 
each time there is a single nonzero entry in it  
which equals $a_{n_0}$. The matrix obtained from $G^{\sharp}|_{u=0}$ by deleting 
its last $n_0-m-2$ columns and the rows with indices $m+2$, $\ldots$, $n_0-1$ 
is precisely $S(P,P_*)$. 
\end{proof}

One can observe that $\det G^{\sharp}$ and $\det G^{\sharp}|_{u=0}$ are 
both degree $n_0$ polynomials in $a_1$. Assume that $a_{n_0}$ belongs to a 
closed disk on which one has $|a_{n_0}|\geq \rho ^{\flat}>0$. Suppose that 
$|u|\ll \rho ^{\flat}$, so one can consider the quantity 
$\tilde{D}_2+(u/a_{n_0}^{n_0-m-2})H^{\sharp}(u,a)$
as a deformation of $\tilde{D}_2$. To this end we set $u:=a_{n_0}^{n_0-m-2}v$, 
$v\in (\mathbb{C},0)$, see Remark~\ref{remuv}. 
Now to prove Statement~\ref{stAA} one has just to repeat 
the reasoning from the last paragraph of the proof of Statement~\ref{stA}.

\end{proof}

\end{document}